\documentclass[11pt,leqno]{amsart}
\usepackage[a4paper,margin=27mm,headheight=14pt]{geometry}
\usepackage[T1]{fontenc}
\usepackage{lmodern,amssymb,mathrsfs}
\usepackage{mathtools,amsthm,amscd,booktabs,array,longtable,microtype}
\usepackage{enumitem}
\usepackage[hidelinks,unicode]{hyperref}
\usepackage{bookmark}
\setlist[enumerate]{label=\textup{(\alph*)},leftmargin=*,itemsep=3pt,topsep=5pt}
\newtheorem{theorem}{Theorem}[section]
\newtheorem{lemma}[theorem]{Lemma}
\newtheorem{proposition}[theorem]{Proposition}
\newtheorem{corollary}[theorem]{Corollary}
\theoremstyle{definition}
\newtheorem{definition}[theorem]{Definition}

\theoremstyle{remark}

\newcommand{\CC}{\mathbb C}
\newcommand{\FF}{\mathbb F}

\newcommand{\cP}{\mathcal P}
\newcommand{\cR}{\mathcal R}
\newcommand{\cJ}{\mathcal J}
\newcommand{\cA}{\mathcal A}
\newcommand{\cO}{\mathfrak O}
\newcommand{\one}{\mathbf 1}
\newcommand{\eps}{\varepsilon}
\newcommand{\ind}{\mathrm{in}}
\newcommand{\out}{\mathrm{out}}
\newcommand{\Iroot}{\mathtt I}
\newcommand{\Oroot}{\mathtt O}
\DeclareMathOperator{\Spec}{Spec}
\DeclareMathOperator{\ord}{ord}
\DeclareMathOperator{\TC}{TC}
\DeclareMathOperator{\occ}{occ}

\DeclareMathOperator{\sgn}{sgn}
\allowdisplaybreaks[2]
\title[Capacities and phase repair]{Zero forcing on maximal outerplanar graphs:\\capacities and phase repair}
\author[Z. Han]{Zongjian Han}
\author[Y. Liang]{Yiran Liang}
\thanks{\raggedright\textsc{Zongjian Han.} School of Mathematics and Statistics, Lanzhou University, Lanzhou, Gansu 730000, China.\enspace\textit{E-mail:} \texttt{dblnhzj@163.com}}
\thanks{\raggedright\textsc{Yiran Liang.} School of Mathematical Sciences, Nankai University, Tianjin, China.\enspace\textit{E-mail:} \texttt{cldlllyr@163.com}}
\date{}
\hypersetup{pdftitle={Zero forcing on maximal outerplanar graphs: capacities and phase repair},pdfauthor={Zongjian Han and Yiran Liang}}
\begin{document}
\raggedbottom
\begin{abstract}
We give a formula for the zero forcing number of a maximal biconnected
outerplanar graph. If its weak dual is a path, then $P(G)=Z(G)=2$.
Otherwise the branching faces determine a canonical core $C$ and a family
of terminal strips. We prove that an optimal forcing process can be
chosen to omit the terminal root edges and to satisfy explicit vertex
and face capacities on $C$. Fix full forests on the terminal strips.
For an ear allocation $\theta$ and an allowed core selection $J$, let
$\tau_C(\theta,J)$ be the least number of root connections whose removal
makes the resulting forest executable. Then
\[
Z(G)=|V(C)|-\max_{\theta,J}\bigl(|J|-\tau_C(\theta,J)\bigr).
\]
The path covering number is obtained by maximizing $|J|$ alone.
We characterize executable directions by equations associated with
blocking circuits. The two-path equations define an obstruction in the
first cohomology of a signed contact multigraph. Local replacements and
a finite comparison of boundary recurrences give the required normal form.
A 56-state recurrence constructs a minimum zero forcing set and a full
chronology in linear time from indexed triangle adjacency. We also give
exact replacements of rooted blocks and infinite families for which
$P(G)<Z(G)$.
\end{abstract}
\maketitle
\setcounter{tocdepth}{1}
\tableofcontents
\clearpage
\section{Introduction}
A zero-forcing process starts from black vertices and uses the rule that a
black vertex with exactly one white neighbour may color that neighbour black.
Its minimum initial-set size is $Z(G)$. Each completed process decomposes the
vertices into directed induced paths, one starting at each initial vertex.
If $P(G)$ denotes the minimum number of vertex-disjoint induced paths covering
$G$, then
\[
 P(G)\le Z(G).
\]
The difference $Z(G)-P(G)$ measures the obstruction to executing a path
cover. Even after its paths are oriented, the prescribed forces may
require several paths to wait for one another.

For a maximal biconnected outerplanar graph, we describe this obstruction
in terms of its polygon triangulation. We first put an optimal forcing
process into a normal form. Its core edges satisfy local degree and face
conditions, and its terminal strips use fixed forests with selected root
connections removed. The remaining direction problem is expressed by
blocking circuits among the covering paths. These constructions give
formulas for both $P(G)$ and $Z(G)$.

\subsection{The core and its capacities}
Such a graph is a triangulated polygon. Its weak dual is the tree with one
vertex per bounded face and one edge for two faces sharing a diagonal. If this
tree is a path, the two-root construction in Section~2 gives
$P(G)=Z(G)=2$. Otherwise let $C$ be the union of the faces in the smallest
subtree containing all degree-three vertices. Every remaining component is a
chain of triangles attached to an outer side of $C$; we call it a terminal
strip.

A strip with at least two triangles is a fan if one root is adjacent to every
interior vertex, and a serpent otherwise. In a fan, call that root its key
and the other root its nonkey. A single triangle is an ear. Fix the full
forests as in Definition~\ref{def:strip} and the construction following it.
A fan requires one root connection at its nonkey;
a serpent requires one at each root. For every ear choose one demanded root,
and write $\theta$ for these choices. If $t_v$ demanded connections meet a
core vertex $v$, its remaining capacity is
\[
 c_v=2-t_v\in\{0,1,2\}.
\]
Let $\mathcal R$ be the terminal root edges. The allowed core selections are
\[
 \mathcal J(C,\theta)=
 \left\{J\subseteq E(C)\setminus\mathcal R:
 \deg_J(v)\le c_v,
 \ |J\cap E(\Delta)|\le1\text{ for every core triangle }\Delta\right\}.
\]
In particular, a shared key of capacity two may use two core edges. A bare
outer side of the core remains eligible.

Theorem~\ref{thm:hard-general} proves that an optimal forcing process can
be chosen with $J\in\mathcal J(C,\theta)$. It omits every terminal root
edge and uses the fixed terminal forests with some root-to-interior edges
removed.

\subsection{Exact path and forcing formulas}
Put $r=|V(C)|$. For $J\in\mathcal J(C,\theta)$, the union of $J$ and the full
terminal forests is an induced path forest with $r-|J|$ components. Let
$\mathcal A_\theta$ be its selected root-to-interior connections. For
$D\subseteq\mathcal A_\theta$, cutting $D$ gives $r-|J|+|D|$ paths. Define
\[
 \tau_C(\theta,J)=
 \min\{|D|:D\subseteq\mathcal A_\theta,
 \text{ the resulting paths admit an executable orientation}\}.
\]
The minimum of an empty set is $+\infty$.
Theorem~\ref{thm:hard-general} gives an allowed configuration with finite
repair cost. The blocking equations of Section~\ref{sec:phases} determine
which orientations are executable. We obtain
(Theorems~\ref{thm:path-capacity} and~\ref{thm:main})
\begin{equation}\label{eq:intro-main}
 \begin{aligned}
 P(G)&=r-\nu(C),&
 \nu(C)&=\max_{\theta,J\in\mathcal J(C,\theta)}|J|,\\
 Z(G)&=r-\max_{\theta,J\in\mathcal J(C,\theta)}
                 \bigl(|J|-\tau_C(\theta,J)\bigr).
 \end{aligned}
\end{equation}
Consequently,
\[
 Z(G)-P(G)=
 \min_{\theta,J}\bigl(\nu(C)-|J|+\tau_C(\theta,J)\bigr).
\]
The first term is the cost of foregoing core connections; the second is the
cost of making the retained connections executable. Equality $Z(G)=P(G)$
holds precisely when some maximum capacity-allowed selection has an
executable full forest.

To encode the direction problem, give each nontrivial path a reference
orientation and a sign $s_i$ with $s_i^2=1$. A blocking circuit traverses
nonempty segments of distinct paths and joins them by graph edges. If its
segment signs are $\sigma_j$, its two cyclic orientations are excluded by
\[
 \prod_j\frac{1+\sigma_js_{i_j}}2=0,
 \qquad
 \prod_j\frac{1-\sigma_js_{i_j}}2=0.
\]
Together with $s_i^2=1$, these equations define a finite reduced scheme whose
points are precisely the executable directions. Two-path conditions have the
form $s_is_j=\kappa_{ij}$ and give a specified class in
$H^1(\Lambda;\mathbb F_2)$, where $\Lambda$ records the two-path constraints.
Longer circuits contribute their full equations. Root cuts alter the path
cover and hence its direction scheme; this is the repair operation measured
by $\tau_C$.

\subsection{The proof and the construction}
The proof first replaces an arbitrary terminal protocol by one of finitely
many modes without increasing its seed cost or its demands on the exterior.
A rooted polygon then exposes only four incidence ports, the incoming and
outgoing root occupations, and the causal reachability relation among those
ports. There are 56 such states. Gluing two child polygons to a triangle is
exact on these data: every compatible state pair lifts, and every parent
protocol restricts to a compatible pair.

To compare the unrestricted recurrence with the recurrence imposing core
capacities, we retain their two values on the same partial graph. A set
of 3,912 normalized data contains the primitive cases and is closed under
triangle composition. The two closing values agree for every datum and
every final terminal type. Induction on the number of core faces proves
the normal form, and attaining choices recover an optimal protocol.
The finite identities are specified in Appendix~A.

The 56-state recurrence constructs a minimum seed set and all predecessor
arrows in $O(n)$ time and $O(n)$ storage words from indexed triangle
adjacency. A topological order of the resulting incidence graph gives a
full chronology (Proposition~\ref{prop:constructive-algorithm}). Formula
\eqref{eq:intro-main} describes the capacity selections and root cuts
which attain this minimum.

The same boundary calculus yields exact rooted-block replacements. Responses
to fixed exterior protocols have 568 normalized types; after the exterior is
allowed to reoptimize, there are 284 additive context types, separated by
seventeen actual test polygons. Terminal and corridor reductions, including
\[
 Z(Q\cup_{ab}N)=Z(Q)+1,
\]
lead to the infinite families in Section~7. These include all-fan configurations
with a parity defect, shared-key configurations requiring two core edges,
and common-edge assemblies whose forcing number exceeds the sum of the two
local values by two.

\subsection{Relation with earlier work}
Zero forcing arose in the minimum-rank problem \cite{AIM}. Constrained
matchings and structural controllability give the viewpoint of \cite{TD}.
The forcing-arc and chain-twist theorem of \cite{CJMZ}, and parallel increasing
path covers \cite{PIP}, describe the causal conditions on a prescribed
path family. We use these criteria while restricting the optimizing edge
configurations by the polygon structure.

For triangulations, Hern\'andez, Ranilla, and Ranilla-Cortina \cite{HRR}
study forcing bounds and the construction of minimum sets. Domat and Kuenzel
\cite{DK} treat triangle paths, separator triangles, and fan leaves; Ison,
Kempton, and Kenter \cite{IKKpre} study bounds for biconnected
outerplanar graphs. Treewidth-based algorithms give another approach to exact
computation; see the introduction of~\cite{GHN} and the references cited there. The normal form proved here isolates the capacity
choices and direction repairs which determine the optimum within this class.

The support geometry and open equation complexes for general graphs are
studied in \cite{HLgeneral}. The related formulation by supported
residuals, positive clocks, and finite root states appears in
\cite{HLcomparison}. Here the polygon decomposition gives the core
capacities, terminal replacements, and constructive recurrence.

Section~2 establishes terminal normal forms and the core. Section~3 develops
causal states and exact response composition. Section~4 proves the capacity
comparison, and Section~5 derives \eqref{eq:intro-main}. Rooted replacements
and exact families occupy Sections~6 and~7. Algorithms and finite
certificates are collected in Appendix~A.

\section{Forcing arrows and terminal strips}\label{sec:strips}

\subsection{The incidence digraph}
A directed edge set $M$ is admissible as an arrow set if it contains no opposite pair and satisfies
\[
 \ind_M(v)\le1,\qquad \out_M(v)\le1\qquad(v\in V).
\]
These are the source and target conditions of a matching between row and column copies of the vertices. In the original graph, a vertex may occur once as a source and once as a target.

For each $v\in V$, introduce two incidence ports $p_v,q_v$. The port $p_v$
records the event that $v$ becomes black. The port $q_v$ records the use of
the row at $v$ for its selected outgoing force; if there is no such force,
$q_v$ is a terminal auxiliary event. Define $D_G(M)$ by
\begin{equation}\label{eq:incidence}
 p_v\longrightarrow q_v,\qquad
 \begin{cases}
 q_u\longrightarrow p_v,&u\to v\in M,\\
 p_v\longrightarrow q_u,&u\to v\notin M,
 \end{cases}\qquad u\sim v.
\end{equation}
The initial set prescribed by $M$ is
\[
 S(M)=V\setminus\{v:\exists u,\ u\to v\in M\}.
\]
We call $M$ executable if precisely its arrows can be performed from $S(M)$ in some chronological order. The predecessor arrows of the process are fixed by $M$.

\begin{lemma}\label{lem:incidence}
An arrow set $M$ is executable if and only if $D_G(M)$ is acyclic. In that case its components are directed induced paths, including isolated vertices, and $|S(M)|=|V|-|M|$.
\end{lemma}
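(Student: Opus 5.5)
The plan is to read $D_G(M)$ as a precedence relation among events and compare its topological orders with chronological orders of forces. The arrow $p_v\to q_v$ says that $v$ is black before it forces. The arrow $q_u\to p_v$ for $u\to v\in M$ says that the force $u\to v$ is what makes $v$ black. The arrow $p_v\to q_u$ for a non-selected neighbour says that every neighbour of $u$ other than its target is black before $u$ forces.

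\emph{Executable implies acyclic.} Given a chronological order of the arrows of $M$, I assign real times to ports and check that every arrow of $D_G(M)$ strictly increases time. If $u\to v$ is the $k$-th force, put $t(q_u)=k$ and $t(p_v)=k+\tfrac12$. Put $t(p_v)=0$ for $v\in S(M)$, and $t(q_u)=|M|+1$ if $u$ has no outgoing arrow. The three kinds of arrow are then checked one at a time.
\begin{enumerate}
\item For $p_v\to q_v$: $v$ is black before it forces, so $t(p_v)<t(q_v)$.
\item For $q_u\to p_v$ with $u\to v\in M$: $t(p_v)=t(q_u)+\tfrac12$.
\item For $p_w\to q_u$ with $u\sim w$ and $u\to w\notin M$: the force at $u$ (if any) requires $w$ to be already black, so $t(p_w)<t(q_u)$. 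The case where $u$ has no force holds by the choice $t(q_u)=|M|+1$.
\end{enumerate}
A directed cycle would force strict decrease somewhere, which is impossible.

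\emph{Acyclic implies executable.} I fix a topological order of $D_G(M)$ and perform the force $u\to v$ when $q_u$ is reached. I prove by induction along the order that, after processing a prefix, the black set is exactly $S(M)$ together with the targets of the processed $q$-ports. The key point is that $p_v$ for $v\notin S(M)$ has a unique possible forcer, because $\ind_M(v)\le1$, and that forcer is its in-neighbour $q_u$. Hence when $q_u$ is reached, the following hold.
\begin{enumerate}
\item $p_u$ precedes $q_u$, so $u$ is black.
\item Every neighbour $w\neq v$ of $u$ satisfies $p_w\to q_u$, so $w$ is black.
\item $v\notin S(M)$, and $p_v$ comes after $q_u$, so $v$ is still white.
\end{enumerate}
Thus the force $u\to v$ is legal at that moment. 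Exactly the arrows of $M$ are performed, so $M$ is executable. This bookkeeping step, making precise that ``$p_w$ processed'' means ``$w$ black'', is the only delicate part, and the invariant above handles it.

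\emph{Structure and count.} Since $\ind_M,\out_M\le1$, each component of $M$ is a directed path or a directed cycle. A cycle of arrows gives a cycle $q\to p\to q\to\cdots$ in $D_G(M)$, so acyclicity leaves only paths, including isolated vertices. For inducedness, take a path $x_1\to\cdots\to x_m$ and suppose there is a chord $x_i\sim x_j$ with $j>i+1$. Then $D_G(M)$ contains $p_{x_j}\to q_{x_i}$, while the path itself gives $q_{x_i}\to p_{x_{i+1}}\to q_{x_{i+1}}\to\cdots\to p_{x_j}$. Together these form a directed cycle, a contradiction. Finally, each vertex is the target of at most one arrow, so $|V\setminus S(M)|=|M|$, which gives $|S(M)|=|V|-|M|$.
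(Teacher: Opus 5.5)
Your proof is correct and follows the paper's route. You timestamp the events of a chronology to show acyclicity, you run the forces along a topological order of $D_G(M)$ for the converse, and you use the in/out bounds to get the path structure and the count; your explicit times and your black-set invariant make both directions more precise than the paper's sketch. The one cosmetic difference is inducedness: you derive it from the directed cycle $q_{x_i}\to p_{x_{i+1}}\to\cdots\to p_{x_j}\to q_{x_i}$ in $D_G(M)$, whereas the paper notes that $x_{i+1}$ and $x_j$ would both be white at the force $x_i\to x_{i+1}$.
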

\begin{proof}
For a chosen force $u\to v$, every vertex in $N[u]\setminus\{v\}$ must be black before $v$. Put $q_u$ after all these events and before $p_v$. An unused row $q_u$ is a sink and may be placed last. A chronological forcing process therefore supplies a strict ordering of every incidence arc.

Conversely take a topological order of $D_G(M)$, make $S(M)$ initially black, and process the other column vertices in that order. If $u\to v\in M$, then
\[
 p_w\longrightarrow q_u\longrightarrow p_v
       \qquad(w\in N[u]\setminus\{v\}).
\]
Thus $u$ and all its neighbours other than $v$ are already black, so the prescribed force is legal. All targets are distinct, proving the cardinality formula.

The in/out bounds make the selected graph a union of paths and possible cycles; strict times exclude cycles. If a chain $v_0\to\cdots\to v_k$ had a chord $v_iv_j$ with $j\ge i+2$, then $v_{i+1},v_j$ would both be white at the force $v_i\to v_{i+1}$. Thus each path is induced.
\end{proof}

The preceding criterion is an incidence form of the forcing-arc description in~\cite{CJMZ}. It gives
\begin{equation}\label{eq:arrow-opt}
 Z(G)=|V|-\max\{|M|:D_G(M)\text{ is acyclic}\},\qquad P(G)\le Z(G).
\end{equation}

\begin{lemma}\label{lem:deletion}
Deleting any selected arrow preserves executability. Reversing all selected arrows also preserves executability.
\end{lemma}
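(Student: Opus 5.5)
Both claims will follow from Lemma~\ref{lem:incidence}: it suffices to show that $D_G(M')$ is acyclic for the modified arrow set $M'$. The admissibility conditions (no opposite pair, in- and out-degree at most one) clearly survive both operations. Deletion removes arrows, and reversal swaps in- and out-degrees and sends the opposite-pair condition to itself.

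For deletion, let $M'=M\setminus\{u\to v\}$. Rather than compare the two digraphs arc by arc, I would argue with the forcing process itself. Take the executable chronology for $M$ and make $v$ initially black, so $S(M')=S(M)\cup\{v\}$. Then run the same forces in the same order, omitting $u\to v$. Every other force $x\to y$ was legal at its time, so all of $N[x]\setminus\{y\}$ was black. Blackness is monotone and $v$ is now black earlier, so the same set is still black and $y$ is still the unique white neighbour, or $y$ is already black. The latter cannot happen, since $y\ne v$ because targets are distinct. Hence the process executes exactly $M'$. Equivalently, $D_G(M')$ differs from $D_G(M)$ only by replacing $q_u\to p_v$ with $p_v\to q_u$ and by the arcs at $p_v$. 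The chronological argument avoids tracking these arcs.

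For reversal, let $M^{-1}$ be the set of reversed arrows. The plan is to exhibit an explicit isomorphism between $D_G(M^{-1})$ and the reverse of $D_G(M)$, which is acyclic iff $D_G(M)$ is. The natural map exchanges the two ports along each path. If $u\to v\in M$, then in the reversed process $v$ forces $u$, so $q_v$ in $D_G(M^{-1})$ plays the role of the port $p_v$ in $D_G(M)$ read backwards. I would define $\phi(p_v)=q_v$ and $\phi(q_v)=p_v$ and check each arc type of \eqref{eq:incidence}.
\begin{enumerate}
\item The arc $p_v\to q_v$ reverses to $q_v\to p_v$, which is carried by $\phi$ to $p_v\to q_v$, as required.
\item For an edge $u\sim v$ with $u\to v\in M$, the arc $q_u\to p_v$ reverses to $p_v\to q_u$, whose image $q_v\to p_u$ is exactly the arc that $v\to u\in M^{-1}$ contributes.
\item For an unselected ordered pair $u\to v\notin M$, the arc $p_v\to q_u$ reverses to $q_u\to p_v$, with image $p_u\to q_v$. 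This is the non-arrow arc $p_u\to q_v$ for the pair $v\to u$, which is unselected in $M^{-1}$ exactly when $u\to v\notin M$.
\end{enumerate}

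The main obstacle is this last case. The correspondence between selected and unselected pairs must be set up correctly, using the fact that $M$ contains no opposite pairs so that each ordered pair has a well-defined status. Sources and sinks without forces (unused rows $q_u$, initial vertices $p_v$) also need care, since these swap roles under $\phi$. If the bookkeeping fails, my fallback is the classical argument: reverse the chronology, taking terminal vertices of the path forest as the new seeds, and verify legality using the induced-path property from Lemma~\ref{lem:incidence}.
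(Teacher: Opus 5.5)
Your proof is correct. For reversal you follow the paper exactly. The paper transposes $D_G(M)$, interchanges $p_x,q_x$, and asserts that \eqref{eq:incidence} becomes the incidence digraph of the reversed arrows; your cases (a)--(c) are precisely that check. They exhaust all arcs, and $u\to v\in M$ iff $v\to u\in M^{-1}$, so your map is a bijection on arcs. The anticipated obstacle and the fallback are therefore unnecessary, and the no-opposite-pair condition is not even used.

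For deletion you take a different route. You rerun the old chronology with $v$ initially black, using monotonicity of the black set and the fact that $v$ is the target of no other arrow. The paper instead stays in the incidence digraph. The only change is that $q_u\to p_v$ becomes $p_v\to q_u$; your parenthetical about further changes at $p_v$ is inaccurate, since the other arcs at $p_v$ are unchanged. Because $v$ has lost its only predecessor and $u$ its only target, $p_v$ is a source and $q_u$ a sink of $D_G(M')$, so no directed cycle can use the new arc.

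Your argument is self-contained from the definition of executability and yields an explicit chronology, namely the old one with one step omitted. The paper's is a one-line consequence of Lemma~\ref{lem:incidence}. Its source/sink description of the modified digraph is also what gets reused in Lemma~\ref{lem:root-delete} and in the mode realizations of Lemma~\ref{lem:modes}, to control how root relations change after a deletion. A chronology argument does not directly supply that information.
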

\begin{proof}
On deleting $u\to v$, the arc $q_u\to p_v$ is replaced by $p_v\to q_u$. Because $u$ has no remaining selected target and $v$ no remaining selected predecessor, $q_u$ becomes a sink and $p_v$ a source. The new arc creates no directed cycle. For reversal, transpose the whole incidence digraph and interchange $p_x,q_x$ for every $x$. Formula~\eqref{eq:incidence} becomes the incidence digraph for the reversed arrows.
\end{proof}

\subsection{Local tests in a polygon triangulation}
We regard a maximal biconnected outerplanar graph with at least three vertices as a convex polygon triangulation. A fixed outerplane embedding justifies this model: biconnectivity makes the outer boundary a simple cycle through all vertices, and maximality makes every bounded face a triangle. The auxiliary single-edge graph will also be used as a rooted block.

\begin{lemma}\label{lem:linear-forest}
For a polygon triangulation, $L\subseteq E$ is an induced linear forest spanning all vertices if and only if
\begin{equation}\label{eq:local-forest}
 \deg_L(v)\le2\quad(v\in V),\qquad
 |L\cap E(\Delta)|\le1\quad(\Delta\text{ a bounded triangular face}).
\end{equation}
Consequently $P(G)=|V|-\max|L|$ subject to \eqref{eq:local-forest}.
\end{lemma}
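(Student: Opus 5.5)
The plan is to prove the two implications separately and then deduce the formula for $P(G)$. An induced linear forest is a subgraph whose components are paths, each component inducing a path in $G$. We read "spanning" as "every vertex is present, possibly as a trivial component". Under this reading $L$ is an induced linear forest exactly when $\deg_L\le2$, $L$ has no cycle, and no edge of $G\setminus L$ joins two vertices of the same component of $L$.

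\emph{Necessity.} The degree bound is immediate. Suppose a bounded face $\Delta=xyz$ contained two edges of $L$, say $xy,yz$. Then $x,z$ lie in one component, and $xz\in E(G)$. Either $xz\in L$, which closes a cycle, or $xz\notin L$, which is a chord of that component. Both are excluded, so each face meets $L$ in at most one edge.

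\emph{Sufficiency.} This is the main step. Assume \eqref{eq:local-forest}. We must show that $L$ is acyclic and chordless. Both failures give a cycle $\Gamma$ of $G$ consisting of a path of $L$ closed by one further edge $e$, where $e\in L$ in the cycle case and $e\notin L$ in the chord case. The key geometric fact is that in a polygon triangulation every cycle $\Gamma$ bounds a sub-polygon, namely the convex hull of its vertices. That sub-polygon is itself triangulated by faces of $G$, and its boundary edges are exactly the edges of $\Gamma$.

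I would then take a shortest such configuration and argue as follows. If the sub-polygon bounded by $\Gamma$ is a single face, then $\Gamma$ is a triangle with at least two edges in $L$, a contradiction. Otherwise the weak dual of the sub-polygon is a tree with at least two leaves, and each leaf is an ear: a face using two consecutive boundary edges of $\Gamma$. At most one boundary edge of $\Gamma$, namely $e$, is outside $L$, and $e$ lies in at most one ear. Hence some ear has both of its boundary edges in $\Gamma\cap L$, contradicting the face condition. So the sufficiency argument reduces to the two-ears property of triangulated polygons, applied to the sub-triangulation bounded by $\Gamma$.

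The point to be careful about is that the region inside $\Gamma$ really is triangulated by faces of $G$. This holds because $G$ is outerplanar with all vertices on the convex outer cycle. Consequently no vertex lies inside $\Gamma$, and any edge of $G$ meeting the interior of the hull joins two vertices of $\Gamma$. Maximality of $G$ then makes the hull region a union of bounded triangular faces.

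\emph{Consequence.} A cover by $k$ vertex-disjoint induced paths, taken together with its path edges, is a spanning induced linear forest $L$ with $|V|-|L|=k$ components. Conversely, each such $L$ gives a cover of that size. Therefore $P(G)=|V|-\max|L|$ with the maximum taken over $L$ satisfying \eqref{eq:local-forest}, by the equivalence just proved.
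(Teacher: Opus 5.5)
Your proof is correct and follows the paper's route. In both arguments a selected cycle or a chord is reduced to a cycle of $G$ that bounds a triangulated sub-polygon with no interior vertices, which is then played against the face condition. The only differences are minor. You close the argument with the two-ears property, whereas the paper injects the $k$ selected boundary edges into the $k-2$ (respectively $k-1$) inner faces. You also treat the cycle and chord cases uniformly, as a path of $L$ closed by one edge $e$.
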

\begin{proof}
If $L$ is a union of induced paths, its vertex degrees are at most two. Two
selected sides of a triangle would lie consecutively in one component, and
the third side would be a chord of that path. Thus the two local conditions
are necessary.

Assume now that they hold. Since the selected degrees are at most two, any
component which is not a path must be a cycle. Suppose there is a selected
cycle with $k$ edges. In a convex polygon, no other polygon vertex lies
strictly inside the convex hull of those cycle vertices. The region bounded
by the cycle is therefore a triangulated $k$-gon and contains exactly $k-2$
triangular faces. Each of its $k$ boundary edges belongs to one face on its
inside. Two selected boundary edges cannot belong to the same face, by the
face condition. This would inject $k$ edges into $k-2$ faces, a contradiction.
Hence every selected component is a path or a singleton.

Suppose a selected path has a chord. Take the subpath between the chord's
ends; it has $k\ge2$ selected edges. Together with the unselected chord it
bounds a $(k+1)$-gon, again with no extra interior vertices. Its triangulation
has $k-1$ faces. Assigning each of the $k$ selected boundary edges to its
inward face gives the same forbidden injection. Thus the path is induced.
The whole selected graph is a spanning forest, whose component number is
$|V|-|L|$. Minimizing that number proves the final formula.
\end{proof}

The augmented weak dual has one vertex for each bounded triangular face and an additional leaf for each outer edge. Its edges correspond to the original graph edges: internal diagonals join adjacent faces and outer edges join their face to the additional leaf. It is a tree. For every polygon vertex $v$, its incident edges correspond to a path $\Pi_v$ in this tree, between the two exterior leaves adjacent to $v$.

\begin{corollary}\label{cor:dual-matching}
If $\widehat T_G$ is this augmented weak dual, then
\[
 P(G)=|V|-\max\{|M|:M\text{ is a matching of }\widehat T_G,
                     \ |M\cap E(\Pi_v)|\le2\text{ for all }v\}.
\]
\end{corollary}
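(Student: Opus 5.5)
The plan is to translate the conditions \eqref{eq:local-forest} of Lemma~\ref{lem:linear-forest} edge by edge into the augmented weak dual $\widehat T_G$. By construction, every edge $e$ of $G$ is exactly one edge $e^*$ of $\widehat T_G$: diagonals become edges between adjacent faces, and outer edges become edges to their private leaves. So $L\mapsto L^*=\{e^*:e\in L\}$ is a size-preserving bijection between subsets of $E(G)$ and subsets of $E(\widehat T_G)$. The corollary will then follow from the final formula of Lemma~\ref{lem:linear-forest}, once we show that $L$ satisfies \eqref{eq:local-forest} exactly when $L^*$ is a matching satisfying $|L^*\cap E(\Pi_v)|\le2$ for all $v$.

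\emph{Face condition and matching condition.} A vertex of $\widehat T_G$ is either a triangular face $\Delta$ or an added leaf. The edges at $\Delta$ are precisely $e^*$ for the three sides $e$ of $\Delta$. An added leaf has degree one, so it never meets two edges of $L^*$. Hence $L^*$ is a matching if and only if $|L\cap E(\Delta)|\le1$ for every bounded face $\Delta$.

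\emph{Degree condition and path condition.} By the stated description of $\Pi_v$, the edges of $\Pi_v$ are exactly $e^*$ for the edges $e$ incident with $v$. Therefore $|L^*\cap E(\Pi_v)|=\deg_L(v)$, and the bound $\le2$ is the degree condition. To justify the description of $\Pi_v$ briefly: the faces around $v$ in a convex triangulation are consecutive in the angular order at $v$. Consecutive faces share the diagonal between them, and the two extreme faces contain the two outer edges at $v$. This yields a path in $\widehat T_G$ from one exterior leaf at $v$ to the other, whose edges are exactly the duals of the edges at $v$.

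Combining the two equivalences, the maximization in Lemma~\ref{lem:linear-forest} is the same as the maximization in the corollary, with the same objective $|L|=|L^*|$.

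The only point needing care is the identification of $E(\Pi_v)$ with the edges incident to $v$. In particular, the endpoints are the exterior leaves, so both outer edges at $v$ are counted. That is what makes the path constraint equal $\deg_L(v)$ rather than $\deg_L(v)$ minus the outer edges. Everything else is a direct transcription.
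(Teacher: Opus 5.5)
Your proposal is correct and follows the paper's own argument. The size-preserving edge correspondence turns the one-edge-per-face condition into the matching condition and the degree bound into $|M\cap E(\Pi_v)|\le2$, after which Lemma~\ref{lem:linear-forest} finishes. Your brief justification that $E(\Pi_v)$ is exactly the dual of the edges at $v$, including both outer edges, is a reasonable addition; the paper simply takes it as part of the construction of $\widehat T_G$.
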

\begin{proof}
A dual matching is precisely the one-selected-edge condition at each triangular face. The additional path inequality is the original vertex-degree condition in~\eqref{eq:local-forest}. The edge correspondence preserves cardinality.
\end{proof}
The paths $\Pi_v$, equivalently the corner identifications of the triangles, are retained as part of the augmented weak-dual data.

\subsection{Boundary comparison}
For an executable arrow set $M$ on a block $H$ rooted at $ab$, put
\[
 i_v=\ind_M(v),\qquad o_v=\out_M(v)\quad(v=a,b),\qquad
 \occ_H(M)=(i_a,i_b,o_a,o_b).
\]
Let $B_{ab}=\{p_a,q_a,p_b,q_b\}$, and let $R_H(M)$ be the positive-length
reachability relation of $D_H(M)$ restricted to $B_{ab}$. We write
\[
 \sigma_H(M)=(\occ_H(M),R_H(M))
\]
for its boundary state. The port order is $(p_a,q_a,p_b,q_b)$, whereas
occupation coordinates have the order $(i_a,i_b,o_a,o_b)$. Let
$\eta(M)\in\{0,+,-\}$ record respectively an unselected root, $a\to b$,
or $b\to a$. Two blocks meeting just in $ab$ have disjoint interiors.
The common selected root arrow is counted once in their union.

\begin{lemma}\label{lem:gluing}
Let $M_1,M_2$ be executable arrow sets on blocks meeting exactly in the
included root edge $ab$, with relations $R_1,R_2$. They glue to an executable
arrow set if and only if
\[
 \begin{gathered}
 \eta(M_1)=\eta(M_2),\\
 \occ(M_1)+\occ(M_2)-\occ(M_1\cap M_2)\le(1,1,1,1),\\
 \TC(R_1\cup R_2)\cap\{(z,z):z\in B_{ab}\}=\varnothing.
 \end{gathered}
\]
\end{lemma}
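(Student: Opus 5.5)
The plan is to reduce everything to Lemma~\ref{lem:incidence}: the glued set $M=M_1\cup M_2$ is executable exactly when it is admissible and $D_{H_1\cup H_2}(M)$ is acyclic. I would first check that the incidence digraph of the union is literally the union of the two incidence digraphs. By \eqref{eq:incidence}, every arc of $D_{H_1\cup H_2}(M)$ comes from a vertex $v$ (the arc $p_v\to q_v$) or from a graph edge $uv$. Since $H_1,H_2$ meet only in $ab$, each non-root edge lies in exactly one block. Its orientation depends only on whether $u\to v$ or $v\to u$ lies in $M$, and that is decided inside its own block. For the root edge $ab$, the condition $\eta(M_1)=\eta(M_2)$ is exactly what makes the two blocks prescribe the same arcs between $\{p_a,q_a\}$ and $\{p_b,q_b\}$. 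Hence $D(M)=D_{H_1}(M_1)\cup D_{H_2}(M_2)$, and the two pieces share only the four ports $B_{ab}$.

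Necessity of the first two conditions follows directly. If $\eta$ differed, the union would contain both $a\to b$ and $b\to a$, an opposite pair, or one block's arc would contradict the other's. The occupation inequality is the statement that $\ind_M(a)$, $\ind_M(b)$, $\out_M(a)$, $\out_M(b)$ are at most $1$. Arrows at $a$ or $b$ are counted in each block, and the shared root arrow, lying in $M_1\cap M_2$, is subtracted once. Interior vertices keep their degrees from a single block, so admissibility of $M$ is equivalent to the first two conditions.

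The main step is the acyclicity claim: $D_1\cup D_2$ is acyclic if and only if $\TC(R_1\cup R_2)$ has no loop on $B_{ab}$.

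For one direction, suppose $D_1\cup D_2$ has a directed cycle. Each $D_i$ is acyclic by hypothesis, so the cycle must pass through both sides. The vertex sets of $D_1$ and $D_2$ meet only in $B_{ab}$, so each transition between sides occurs at a port. Cut the cycle at its visits to $B_{ab}$. Each resulting arc-segment lies wholly in one $D_i$ and has positive length, so it is an element of $R_i$. Concatenating these elements gives a loop in $\TC(R_1\cup R_2)$.

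Conversely, each pair $(x,y)\in R_i$ is realized by a positive-length path in $D_i$. A loop in the transitive closure therefore concatenates into a closed directed walk of positive length in $D_1\cup D_2$, which contains a directed cycle.

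The only delicate point is bookkeeping at the root edge. The arcs among the ports of $a$ and $b$, including $p_a\to q_a$ and $p_b\to q_b$, belong to both $D_1$ and $D_2$. A segment using only such arcs may therefore be assigned to either block. This causes no difficulty, because identical arcs are present in both relations.
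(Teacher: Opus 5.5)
Your proposal is correct and follows essentially the same route as the paper. You identify the union incidence digraph with the union of the two local ones once the root choices agree, read the occupation inequality as the global in/out bounds at $a,b$, and prove the acyclicity equivalence in both directions: cut a cycle at its visits to $B_{ab}$, and conversely concatenate realizing paths of a loop in $\TC(R_1\cup R_2)$.
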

\begin{proof}
Let $M_1,M_2$ be the two local arrow sets. They define a single arrow set on the union precisely when they make the same choice on the common edge. Once this is satisfied, at a root vertex $v$ the global counts are
\[
 \ind_M(v)=\ind_{M_1}(v)+\ind_{M_2}(v)-\ind_{M_1\cap M_2}(v),
\]
and the analogous formula holds for outgoing arrows. The only possible common selected arrow is the root arrow. Thus the stated occupation inequalities are necessary and sufficient for the source and target constraints. Interior counts do not change.

Under these constraints, the union incidence graph is the union of the two local incidence graphs, with common arcs identified. Suppose it has a directed cycle. It cannot be contained in one block, because each local arrow set is executable. Break the cycle whenever it meets one of $p_a,q_a,p_b,q_b$. Each intervening segment lies in one block and is a positive-length path in its incidence graph. Replacing every such segment by its recorded root relation gives a directed closed walk in $R_1\cup R_2$, hence a diagonal pair in $\TC(R_1\cup R_2)$.

Conversely, a diagonal pair lifts to a nonempty directed closed walk in the
union incidence graph and hence to a directed cycle.
Lemma~\ref{lem:incidence} completes the equivalence.
\end{proof}

When the root is unselected, its mandatory relation is
\begin{equation}\label{eq:R0}
 R_0=\{p_a\to q_a,p_a\to q_b,p_b\to q_a,p_b\to q_b\}.
\end{equation}
By Lemma~\ref{lem:gluing}, a root-free arrow set can be replaced in any
exterior by one with no larger occupation, a contained root relation,
and at least as many arrows.

\begin{lemma}\label{lem:root-delete}
Suppose $a\to b$ is selected in an executable root block. Deleting it leaves root relation exactly $R_0$ and occupation $(i_a,0,0,o_b)$.
\end{lemma}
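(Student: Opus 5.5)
Since $a\to b$ is selected in an executable root block $H$, the targets are distinct and each vertex has out-degree at most one. Hence $a$ has no other outgoing arrow ($o_a=1$ is used by $a\to b$) and $b$ has no other incoming arrow ($i_b=1$). After deleting the arrow, $\out(a)=0$ and $\ind(b)=0$, while $i_a$ and $o_b$ are untouched. This gives occupation $(i_a,0,0,o_b)$ directly; executability of $M\setminus\{a\to b\}$ is Lemma~\ref{lem:deletion}.

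For the relation, let $M'=M\setminus\{a\to b\}$. I will first show that $R_0\subseteq R_H(M')$.
\begin{itemize}
\item The arc $p_a\to q_a$ is mandatory by \eqref{eq:incidence}, and similarly $p_b\to q_b$.
\item Since $a\to b\notin M'$ and $a\sim b$, \eqref{eq:incidence} gives the arc $p_b\to q_a$.
\item Since $b\to a\notin M'$ (no opposite pair, as $a\to b\in M$), it gives the arc $p_a\to q_b$.
\end{itemize}
So all four pairs of $R_0$ are arcs.

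For the reverse inclusion I use the argument in the proof of Lemma~\ref{lem:deletion}. In $D_H(M')$:
\begin{itemize}
\item $q_a$ is a sink, because $a$ has no selected target.
\item $p_b$ is a source, because $b$ has no selected predecessor.
\end{itemize}
Hence no positive-length path ends at $p_b$ or starts at $q_a$. It remains to exclude paths from $p_a$ to itself, from $q_b$ to anything, and into $p_a$ from other ports.

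\begin{itemize}
\item \emph{Paths starting at $q_b$.} The key claim is that every path from $q_b$ avoids $B_{ab}$. It cannot reach $q_a$ (no arcs into $q_a$ other than from $p$-ports of neighbours and $p_a$, but those would have to be reached first), and so on. Rather than chase these cases, I will use acyclicity of $D_H(M)$ itself.
\item \emph{Paths into $p_a$.} In $D_H(M)$ we have the path $q_a\to p_b$. Also every $p$-port of a neighbour $w\ne b$ of $a$ precedes $q_a$, and $p_a\to q_a$.
\end{itemize}

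Concretely, suppose $D_H(M')$ contains a path from some port $z$ to a port $z'$ of $B_{ab}$, other than the $R_0$ pairs. Compare $D_H(M')$ with $D_H(M)$: the only arc that changed is $q_a\to p_b$, which became $p_b\to q_a$. A path in $D_H(M')$ avoiding that arc is a path in $D_H(M)$. Every extra pair then closes a cycle in $D_H(M)$ via the chain
\[
p_a\to q_a\to p_b\to q_b,
\]
which exists in $D_H(M)$ because $p_b\to q_b$ is mandatory. For example, a path $q_b\rightsquigarrow p_a$ would yield the cycle $p_a\to q_a\to p_b\to q_b\rightsquigarrow p_a$. The same chain handles the remaining cases:
\begin{itemize}
\item A path $q_b\rightsquigarrow q_a$, or a path $q_b\rightsquigarrow p_b$, or a cycle through $q_b$, each closes a cycle with the appropriate tail of the chain.
\item A path $p_a\rightsquigarrow p_a$ is itself a cycle.
\item Paths $q_b\rightsquigarrow q_b$ or $q_a\rightsquigarrow\cdot$ are excluded as above.
\end{itemize}
Paths using the new arc $p_b\to q_a$ end at or pass through the sink $q_a$, so they can only end there. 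Their pairs are of the form $(\cdot,q_a)$ with start $p_a$ or $p_b$, which are in $R_0$, or start $q_b$, which is excluded by the same cycle argument applied to the prefix ending at $p_b$. Since $p_b$ is a source, that prefix is empty, so the start is $p_b$ itself.

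I expect the main obstacle to be organizing this case check cleanly. The single principle doing all the work is that a non-$R_0$ pair, combined with the chain $p_a\to q_a\to p_b\to q_b$ in $D_H(M)$, would create a directed cycle, contradicting Lemma~\ref{lem:incidence}.
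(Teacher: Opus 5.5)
Your proposal is correct and follows essentially the same route as the paper. After the deletion $q_a$ is a sink and $p_b$ a source, so the only path using the new arc $p_b\to q_a$ is that arc itself and every other path is an old path of $D_H(M)$; acyclicity with the old chain $p_a\to q_a\to p_b\to q_b$ then excludes the backward pairs, while the forward non-$R_0$ pairs $p_a\to p_b$, $q_a\to q_b$ are excluded by the source and sink conditions. You never list the pair $p_b$ to $p_a$ explicitly, but your cycle-closing principle covers it; the paper organizes the same case check more compactly by noting that the old relation lies in the total order $p_a<q_a<p_b<q_b$.
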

\begin{proof}
The selected root and the two diagonal incidences give the old chain
\[
 p_a\longrightarrow q_a\longrightarrow p_b\longrightarrow q_b.
\]
Acyclicity excludes every old relation in the opposite direction. Delete
$q_a\to p_b$ and insert $p_b\to q_a$. Since $a$ has no other selected
outgoing graph edge, $q_a$ is now a sink; since $b$ has no other selected
incoming graph edge, $p_b$ is now a source. In particular there is no new
path ending at $p_b$ or beginning at $q_a$.

Every path using the new arc starts at $p_b$ and stops at $q_a$: it cannot
have a nontrivial segment before that source or after that sink. Every other
new root relation was already an old relation. Among relations compatible
with the displayed old order, the only ones not in $R_0$ were $p_a\to p_b$
and $q_a\to q_b$, both now impossible by the source and sink conditions.
All four relations in $R_0$ are actual unmatched root or diagonal
incidences. Hence the new relation is exactly $R_0$.
The removed arrow was the sole outgoing arrow at $a$ and the sole incoming
arrow at $b$; the other two occupations are unchanged. This gives
$(i_a,0,0,o_b)$.
\end{proof}

\subsection{Terminal strips}
The terminal branches of the weak dual are paths of triangles. We describe
their attachments from the root edge in order to compute their forcing
modes.

\begin{definition}\label{def:strip}
A rooted terminal strip starts with an edge $ab$. At each step a new vertex
is joined to the endpoints of the current edge; if the construction
continues, the next current edge consists of that new vertex and one old
endpoint. A strip with $m$ added vertices has $m$ triangular faces. For
$m\ge2$, order its roots so that $b$ has the unique interior neighbour
$u_1$. The strip is a fan if $a$ is adjacent to every interior vertex;
in this case $a$ is its key and $b$ its nonkey. A nonfan strip is called a
serpent. We retain the ordered root pair also for serpents. A strip with
one triangular face is an ear.
\end{definition}
Write the added vertices as $u_1,\ldots,u_m$ and the fronts as $e_0=ab,e_1,\ldots,e_{m-1}$. For $i<m$, let $r_i$ be the endpoint of $e_{i-1}$ omitted from $e_i$. At stage $i$, the vertices of $e_{i-1}$ and all earlier added vertices are black. An omitted vertex never returns to a later front, so
\[
 N(r_i)\cap\{u_i,\ldots,u_m\}=\{u_i\}.
\]
Consequently $r_i\to u_i$ is legal. At the last stage both ends of $e_{m-1}$ are black and $u_m$ is the only remaining white vertex of the strip, so either end can force it. No selected source has been used before: a used source retired at its step, whereas the current endpoints have not retired. This constructs exactly $m$ arrows from the two roots. By Lemma~\ref{lem:incidence}, they form two disjoint rooted induced paths. For a fan, choose $u_{m-1}\to u_m$ at the final step. Its fixed full forest
is then the path $b,u_1,\ldots,u_m$, together with the singleton root $a$.
For a serpent, fix either legal choice at the final step and retain the
resulting two root-started paths throughout the paper. An ear is assigned
its demanded root when the ear allocation is chosen.

\begin{lemma}\label{lem:serpent-geometry}
The internal graph of a serpent contains a triangle. Its only degree-two interior vertex is the final vertex, and this vertex is adjacent to neither root. In a root-free induced path cover of a serpent with exactly two paths, both roots have selected degree one. If that cover is executable, both root arrows are directed into the roots, or both out of the roots.
\end{lemma}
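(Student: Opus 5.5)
The proof proceeds in four steps, following the order of the four assertions. Throughout we use the strip encoding of Definition~\ref{def:strip}. Write the strip as a word recording, at each step $i<m$, which endpoint $r_i$ of the front $e_{i-1}$ is retired. A vertex stays on the front for a consecutive block of steps and is adjacent to exactly the added vertices created while it is on the front. Call a step a \emph{turn} if the retired endpoint changes side relative to the previous step.

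\emph{Step 1 (a triangle in the internal graph).} A fan is exactly a strip in which $a$ is never retired before the last step, so every $u_i$ is adjacent to $a$. Because $b$ has the unique interior neighbour $u_1$, the first retired vertex is $b$. In a serpent some later step $i\ge2$ retires $a$; take the first such step. The front $e_{i-1}$ is then $a\,u_{i-1}$, and after retiring $a$ the new front is $u_{i-1}u_i$. If $i+1\le m$, the vertex $u_{i+1}$ is adjacent to both ends of $u_{i-1}u_i$, which gives the interior triangle $u_{i-1}u_iu_{i+1}$. If instead $i=m$, then $a$ is adjacent to every $u_j$ and the strip is a fan, a contradiction. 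The index bookkeeping here needs care, but it is routine.

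\emph{Step 2 (degree-two vertices).} In the whole strip graph, $u_m$ has degree two, with neighbours the two ends of $e_{m-1}$. Every $u_j$ with $j<m$ enters the front $e_j$, so it is adjacent to $u_{j+1}$ as well as to its two parents. Hence every earlier interior vertex has degree at least three. For the final vertex we show that neither root survives on $e_{m-1}$. The root $b$ retires at step $1$, and $a$ retires at some step $i<m$ in a serpent. Thus $e_{m-1}\subseteq\{u_1,\dots,u_m\}$, and $u_m$ is adjacent to neither root.

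\emph{Step 3 (root degrees in a two-path cover).} Take a root-free induced path cover $L$ with exactly two paths. By Lemma~\ref{lem:linear-forest} it satisfies \eqref{eq:local-forest}. Counting gives $|L|=m$. We must exclude a root being an isolated path. Suppose $a$ is a singleton component. Then all interior vertices together with $b$ form one induced path. Since $b$ meets only $u_1$ inside, this path is Hamiltonian on $\{b,u_1,\dots,u_m\}$, and it is induced in a graph containing the triangle of Step~1. The face condition forbids a path that uses two sides of that triangle. A path passing through its three vertices must therefore use one side and reach the third vertex from outside. Checking the triangle's position, the path is then forced to create a chord. I expect this case analysis to be the main obstacle, and I would first try a counting argument. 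The faces containing the turn must each receive at most one edge. The path needs $m$ edges, but once $a$ is unused only $m-1$ faces are available among the non-root faces incident to the interior. This gives a contradiction, and the symmetric argument handles $b$. If instead both roots had degree zero, the two paths would be the two singletons, which is impossible for $m\ge2$. If one root had degree two, the cover would be root-free only if neither root edge is used, so the degree count again reduces to the previous cases.

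\emph{Step 4 (directions).} Let the two paths start at the roots, and suppose one root arrow points into its root while the other points out. Apply Lemma~\ref{lem:incidence} to the incidence digraph $D_G(M)$. The diagonal incidences between the two paths along the fronts, in particular around the triangle from Step~1, then yield a directed cycle of the form $q_x\to p_y\to q_y\to\dots\to p_x$. This cycle is exactly a two-segment blocking circuit. Reversing all arrows, which Lemma~\ref{lem:deletion} permits, reduces the remaining configuration to the case where both arrows point out of the roots. The opposite-sign case yields the cycle, so executability forces equal orientations.
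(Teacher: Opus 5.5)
Your Steps 1 and 2 are correct and are essentially the paper's argument.

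\textbf{The gap in Step 3.} You never exclude $\deg_L(a)=2$. The root $a$ is adjacent to $u_1,\dots,u_h$ with $h\ge2$, so in a root-free cover it can a priori have two selected interior neighbours. Your sentence about root edges does not address this case. The missing idea is a separation argument:
\begin{enumerate}
\item Since $u_1$ is the only available neighbour of $b$, the edge $bu_1$ is selected, so $u_1$ lies on the path $Q$ of $b$.
\item If $a$ had selected neighbours $u_i,u_j$ with $i<j$, then $i\ge2$. The face $(a,u_i,u_{i+1})$ forces $j\ge i+2$.
\item Hence $u_{i+1}$ is a neighbour of $a$ that is not on the path $P$ of $a$, since otherwise $au_{i+1}$ would be a chord of $P$. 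So $u_{i+1}\in Q$.
\item The diagonal $au_i$ separates $u_1$ from $u_{i+1}$, and both its ends lie on $P$. Therefore $Q$ cannot join $u_1$ to $u_{i+1}$, a contradiction.
\end{enumerate}
By contrast, the singleton case you expected to be the main obstacle is immediate: the other path would contain all three vertices of the interior triangle, and no induced path can. Your face count there is unnecessary and, as stated, wrong: a face through $a$ such as $(a,u_{i-1},u_i)$ can still carry the selected edge $u_{i-1}u_i$.

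\textbf{The gap in Step 4.} Step 4 exhibits no actual cycle. The reversal step is also confused: reversing all arrows sends a mixed configuration (one arrow into a root, one out of a root) to another mixed configuration, never to the ``both out'' case.

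The paper's argument instead uses your Step 2. With opposite directions, the initial set is one root and one interior vertex $x$. The black root has two white neighbours: the other root and its own target. If $x\ne u_m$, then $\deg_H(x)\ge3$ with at most one black neighbour. If $x=u_m$, neither of its two neighbours is a root. So no first force exists.

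Alternatively, your blocking-circuit idea can be made precise. Suppose $P$ leaves $a$ and $Q$ enters $b$. Take the contact edge $ba$ and any edge $xy$ with $x\in P\setminus\{a\}$ and $y\in Q\setminus\{b\}$. Such an edge exists because the interior is connected and both sets are nonempty once Step 3 is repaired. The segments from $a$ to $x$ and from $y$ to $b$ both follow the chosen directions, which is a forbidden pattern in Theorem~\ref{thm:blocking}. The other mixed case is symmetric.
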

\begin{proof}
Let $e_i$ be the fronts just defined. A vertex which leaves a front is never used in a later attachment. Since the strip is not a fan, both original roots eventually leave the front. If the last root leaves in passing to $e_j$, then $e_j$ has two interior endpoints and the next face adds a third interior vertex. Thus the internal graph contains a triangle. Every $u_i$ with $i<m$ gains a neighbour $u_{i+1}$ besides its two older neighbours. Only $u_m$ has degree two. Both original roots have already left its front, so neither is adjacent to $u_m$.

Now let a root-free induced path cover consist of two paths. The roots lie on different paths: if they lay on one, the unselected edge $ab$ would be its chord. Neither root can form a singleton path, for then the other induced path would contain the whole internal triangle. The root $b$ has only one available interior neighbour, $u_1$, so its selected degree is one. Call this path $Q$ and the path containing $a$ by $P$.

The interior neighbours of $a$ are an initial sequence $u_1,\ldots,u_h$ of the added vertices. Consecutive members of this sequence are adjacent: while $a$ remains in the front, the next front is $\{a,u_i\}$. Suppose that $a$ had two selected neighbours $u_i,u_j$, where $i<j$. Since $u_1\in Q$, we have $i\ge2$. Since a triangle cannot contribute two selected edges, $j\ge i+2$. Every other neighbour of $a$ must lie in $Q$; otherwise its edge to $a$ would be an unselected chord of $P$. In particular $u_{i+1}\in Q$.

The diagonal $au_i$ is the front separating the already constructed faces from the later faces. Its two sides meet only in its endpoints. One side contains $b,u_1$, and the other contains $u_{i+1}$. Both separator endpoints belong to $P$, so a path $Q$ disjoint from $P$ cannot connect $u_1$ to $u_{i+1}$. This contradiction proves $\deg_P(a)=1$.

Finally suppose the two root incidences have opposite directions. Because each root has selected degree one and lies in a different path, the prescribed initial set then consists of one root and one interior vertex $x$. The black root sees both the other, white root and its own white interior target. It cannot force. If $x\ne u_m$, then $\deg_H(x)\ge3$ and at most one neighbour, the black root, is black; it too has at least two white neighbours. If $x=u_m$, it is adjacent to neither root and both of its neighbours are white. There is therefore no first force from the prescribed initial set. Only the two equal root directions remain possible.
\end{proof}

A symbol $a\Iroot$ means that an interior arrow enters $a$; $a\Oroot$ means
that $a$ sends an arrow into the interior. For a strip with $m$ interior
vertices, write
\[
 w=m-|M_H|=|S(M_H)|-2.
\]
Thus $w$ is the cost normalized at the two roots. The table records this
cost, the root occupation, and the relation in addition to $R_0$:
\begin{center}\small
\begin{tabular}{llllr}
\toprule
Strip&Mode&$(i_a,i_b,o_a,o_b)$&Extra relation&$w$\\\midrule
Fan&empty&$0000$&none&1\\
&$b\Iroot$&$0100$&$p_a\to p_b$&0\\
&$b\Oroot$&$0001$&$q_b\to q_a$&0\\\midrule
Serpent&empty&$0000$&none&2\\
&$a\Iroot$&$1000$&none&1\\
&$a\Oroot$&$0010$&none&1\\
&$b\Iroot$&$0100$&$p_a\to p_b$&1\\
&$b\Oroot$&$0001$&$q_b\to q_a$&1\\
&$\Iroot\Iroot$&$1100$&$p_a\to p_b$&0\\
&$\Oroot\Oroot$&$0011$&$q_b\to q_a$&0\\\midrule
Ear&empty&$0000$&none&1\\
&$a\Iroot$&$1000$&$p_b\to p_a$&0\\
&$a\Oroot$&$0010$&$q_a\to q_b$&0\\
&$b\Iroot$&$0100$&$p_a\to p_b$&0\\
&$b\Oroot$&$0001$&$q_b\to q_a$&0\\\bottomrule
\end{tabular}
\end{center}

\begin{lemma}\label{lem:modes}
Every table entry is realized on the original strip. Every executable root-free arrow set is dominated by an entry: the entry has no greater cost or occupation, and its root relation is contained in the old relation.
\end{lemma}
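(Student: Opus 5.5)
We prove the two assertions separately. Realization comes first, by explicit construction from the fixed forests of Definition~\ref{def:strip} together with Lemmas~\ref{lem:deletion} and~\ref{lem:root-delete}. Domination is then a case analysis on the root occupation of an arbitrary executable root-free arrow set $M$ on the strip $H$.

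\emph{Realization.} Take the full forest built after Definition~\ref{def:strip}. It has $m$ arrows and both roots are sources, so its occupation is $o_a,o_b\le 1$ and $w=0$.
\begin{itemize}
\item[\textup{(i)}] For a serpent this gives the $\Oroot\Oroot$ mode. Reversing all arrows (Lemma~\ref{lem:deletion}) gives $\Iroot\Iroot$.
\item[\textup{(ii)}] Deleting the first arrow at one root costs $1$ in $w$ and gives the single-root modes. Deleting both arrows gives the empty mode with $w=2$. Deletion preserves executability by Lemma~\ref{lem:deletion}.
\item[\textup{(iii)}] For a fan, the forest $b,u_1,\dots,u_m$ with $a$ singleton gives $b\Oroot$; reversal gives $b\Iroot$; deleting $b\to u_1$ gives the empty mode with $w=1$.
\item[\textup{(iv)}] For an ear, either root forces the apex, and reversal gives the $\Iroot$ versions.
\end{itemize}
The extra relations are read off $D_H(M)$. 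In mode $b\Oroot$, $u_1$ is a neighbour of $a$ that is white until $b$ forces it. Hence $p_{u_1}\to q_a$ is an incidence arc, and with $q_b\to p_{u_1}$ we get $q_b\to q_a$. Dually, in mode $b\Iroot$ we get $p_a\to q_{u_1}\to p_b$. For a serpent's single-root mode at $a$ one checks that no relation beyond $R_0$ is forced. Even if additional relations were present, the domination statement only needs the listed relation to be \emph{contained} in the actual one, so listing fewer relations is harmless.

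\emph{Domination.} Let $M$ be executable and root-free. Its components are induced paths (Lemma~\ref{lem:incidence}), so the cost is $w=(\text{number of paths})-2$. This reduces domination to the following case analysis on $(o_a,o_b,i_a,i_b)$.
\begin{itemize}
\item[\textup{(1)}] \emph{Both roots unused, or any occupation with cost at least the empty cost.} Compare with the empty entry. Its relation $R_0$ is contained in every root-free relation by \eqref{eq:R0}. Its cost is $1$ for fans and ears and $2$ for serpents, and this is a lower bound on $w$ for root-free covers. For a serpent, a root-free cover with two paths forces both roots to have degree one, by Lemma~\ref{lem:serpent-geometry}; hence with a single root used, at least three paths are needed.
\item[\textup{(2)}] \emph{Exactly one root used at $b$ with direction $\Oroot$.} Then $b\to u_1$ is selected. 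We need $q_b\to q_a$ in $R_H(M)$, which the same incidence chain $q_b\to p_{u_1}\to q_a$ provides, since $u_1\sim a$ and $u_1\to a\notin M$. Similarly for $\Iroot$ and for ears at either root.
\item[\textup{(3)}] \emph{Single-root mode at $a$ in a serpent.} The required relation is trivial.
\item[\textup{(4)}] \emph{Both roots used.} By Lemma~\ref{lem:serpent-geometry}, executability forces equal directions, giving $\Iroot\Iroot$ or $\Oroot\Oroot$, and the $b$-chain supplies the relation.
\item[\textup{(5)}] \emph{Both roots used in a fan or ear.} This is impossible or dominated. In an ear, both roots with the apex would create a chord $ab$, or the apex would have in-degree $2$. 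In a fan, $a$ used and $b$ used with the remaining cover of cost $\ge0$ is dominated by $b\Oroot$ or $b\Iroot$ only if the $a$-occupation is dropped, which is allowed since the entry need only have no greater occupation.
\end{itemize}

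The main obstacle is the cost bounds. Showing that a fan needs $w\ge1$ unless $b$ is used, and that a serpent needs $w\ge1$ unless both roots are used, requires a path-counting argument. I would prove these by induction along the fronts $e_i$: each $r_i$ is adjacent only to $u_i$ among later vertices, which bounds the number of paths that can be merged.
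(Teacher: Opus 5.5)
Your overall route (realize the entries from the fixed full forests by reversal and deletion, then dominate by a case analysis using Lemma~\ref{lem:serpent-geometry}) is the paper's, but two steps fail as written.

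First, case (4) misapplies Lemma~\ref{lem:serpent-geometry}. Its equal-direction conclusion is only for root-free covers with exactly two paths, i.e.\ $w=0$; for $w\ge1$ a serpent can use both roots in opposite directions. Take the serpent with faces $(a,b,u_1),(a,u_1,u_2),(a,u_2,u_3),(u_2,u_3,u_4)$. The arrows $u_4\to u_2$, $u_3\to a$, $b\to u_1$ execute in this order from $\{b,u_3,u_4\}$. The set is root-free, has $w=1$ and occupation $(1,0,0,1)$. Neither $\Iroot\Iroot$ nor $\Oroot\Oroot$ lies below that occupation, and your cases (1)--(3) do not cover this set either.

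The paper avoids this by organizing the serpent case by $w$. The case $w\ge2$ goes to the empty mode. The case $w=1$ goes to a single-root mode: whenever $b$ is occupied, take the $b$-mode in the direction of $b$'s arrow, whose relation is forced through $u_1$. Only $w=0$ invokes Lemma~\ref{lem:serpent-geometry}. In that forcing argument, the arc $p_{u_1}\to q_a$ needs $a\to u_1\notin M$, which holds because $u_1$ already has its incoming arrow from $b$; the condition $u_1\to a\notin M$ that you state is the wrong one.

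The cost bounds you defer as the main obstacle need no induction along fronts. First, $w\ge0$ because the strip contains a triangle. Second, a fan with $w=0$ cannot leave $b$ isolated, since the other path would then contain the triangle $a,u_1,u_2$. Third, a serpent with both roots unused has $w\ge2$, because its interior contains a triangle.

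Second, the realization half only shows that each listed extra relation is present, and your remark that listing fewer relations is harmless is backwards. An entry is realized only if its realization has exactly the listed relation. The domination clause is usable only for that exact relation; Theorem~\ref{thm:terminal-normal} relies on each mode adding at most one pair to $R_0$. An unlisted relation in a realization could close a cycle against an exterior that the old relation did not, so you must prove the upper bound.

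The paper does this with sources and sinks. A root column without a selected incoming arrow is a source, and a root row without a selected outgoing arrow is a sink. Since $R_0$ rules out row-to-column pairs, a pair $p_x\to p_y$ needs $i_y=1$ and a pair $q_x\to q_y$ needs $o_x=1$. In the full serpent mode, $q_a\to q_b$ would close a cycle with $q_b\to q_a$. In the $a\Oroot$ serpent mode, $q_a\to q_b$ is impossible because the in-neighbours $p_a,p_b,p_{u_1}$ of $q_b$ are all sources once $b\to u_1$ is deleted; the $a\Iroot$ mode is dual. This is precisely the content hidden in your ``one checks''.
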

\begin{proof}
For a fan, the internal graph is the path $u_1-\cdots-u_m$, and $b$ is adjacent internally only to $u_1$. Orienting the internal path by itself uses $m-1$ arrows and leaves both roots unused, giving the empty mode. Attach $b$ and orient the path $b,u_1,\ldots,u_m$ in either direction. These two realizations use $m$ arrows. For $u_1\to b$, the adjacency $a\sim u_1$ forces
\[
 p_a\to q_{u_1}\to p_b.
\]
For $b\to u_1$, the same adjacency gives
\[
 q_b\to p_{u_1}\to q_a.
\]
All other proposed extra relations would have to enter an unmatched root column or leave an unused root row. Such columns are sources and such rows are sinks. Thus the displayed pairs give the full extra relations.

For a serpent, use the fixed two-root construction. Its first arrow is $b\to u_1$, so the full outward mode has the relation $q_b\to p_{u_1}\to q_a$. Both root columns are sources; a relation $q_a\to q_b$ would make a cycle. No root row can reach either root column. These observations give exactly $R_0\cup\{q_b\to q_a\}$. Reversing all arrows, transposing the incidence graph and interchanging $p,q$, gives the inward mode with relation $R_0\cup\{p_a\to p_b\}$.

The remaining modes are obtained by deleting the arrow at each root that is to become inactive. If the $a$ arrow is removed, the relation forced through $u_1$ remains. If the $b$ arrow is removed, its row becomes a sink in the outward case, or its column becomes a source in the inward case, so the extra relation disappears. Removing both root incidences leaves only $R_0$. Each deletion adds one to $w=m-|M_H|$ and is safe by Lemma~\ref{lem:deletion}. This proves all seven realizations on the same fixed full forest. On one triangle the only root-free choices are the empty choice and its four single directed nonroot edges, giving the ear table.

For domination, the underlying selected forest has at least two components, since the strip contains a triangle. Hence $|M_H|\le m$ and $w\ge0$. In a fan, if $w\ge1$, the empty mode has no greater cost or occupation and relation $R_0$, which is present in every root-free state. If $w=0$, there are exactly two components. The root $b$ cannot be isolated, because the other path would contain $a,u_1,u_2$ and their triangle. Since $b$ has only one interior neighbour, it has exactly one selected incidence. Its direction is either $b\Iroot$ or $b\Oroot$, and the corresponding extra relation has already been shown to follow from that incidence. The table mode has cost zero and uses no other root slot.

In a serpent, $w\ge2$ is dominated by the empty mode. If $w=1$, some root incidence must be present: with neither root used, the $m$ interior vertices would have $m-1$ selected edges and hence form one induced path, contradicting their internal triangle. If $b$ is occupied, choose its direction from the table; its extra relation is forced through $u_1$. If only $a$ is occupied, choose one of its occupied directions; the table mode adds no extra relation. Both choices have cost one and no greater occupation. If $w=0$, Lemma~\ref{lem:serpent-geometry} proves that the two roots have degree one with equal directions. The corresponding full table mode has precisely the required occupation and its obligatory extra relation is already present. These alternatives exhaust all values of $w$.
\end{proof}

\begin{theorem}\label{thm:terminal-normal}
Let a terminal strip $H$ meet an arbitrary exterior $Q$ exactly along its root edge $ab$. Every executable global arrow set can be replaced, without decreasing its number of arrows, by one omitting $ab$ and using a table mode on $H$. All exterior arrows other than $ab$ remain unchanged.
\end{theorem}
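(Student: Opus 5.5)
The plan is to split the global executable arrow set $M$ into its exterior part $M_Q$ and its strip part $M_H$, both considered on blocks rooted at $ab$, and to apply the gluing criterion of Lemma~\ref{lem:gluing} twice.

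First we remove the root arrow. If $ab$ is unselected there is nothing to do. Otherwise $ab$ is selected, say $a\to b$; the case $b\to a$ is symmetric after exchanging the roles of $a$ and $b$. We then trade this arrow against the strip. Deleting $a\to b$ from $M_Q$ gives, by Lemma~\ref{lem:root-delete}, relation exactly $R_0$ and occupation $(i_a,0,0,o_b)$ on the exterior side. This frees the slots $o_a$ and $i_b$ at the roots. On the strip side, the old $M_H$ contains $a\to b$ together with root-free interior arrows. Deleting the root from $M_H$ as well, Lemma~\ref{lem:deletion} keeps it executable. The strip part then has the form $M_H'$, and it is dominated by a table mode via Lemma~\ref{lem:modes}. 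This loses one arrow, which we must recover.

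The recovery is the key step. I would show that, with the root deleted, the strip cost $w$ can be lowered by one using the freed slot $o_a$ or $i_b$. Compare with the table: in the old configuration the path through $a\to b$ continues from $b$ into the interior (or enters $a$ from the interior). Reading it as a strip path, the root arrow merges two components. After deletion the strip has one more component than a root-free mode needs, unless a root becomes available. Concretely, I expect a case check by strip type, using the freed slots. A fan gains $b\Iroot$ if $i_b$ is freed, and a serpent gains $a\Oroot$ or $b\Iroot$. In each case the cost drops by one, so the total arrow count is restored.

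It remains to verify that the glue still works. For occupation, the table mode uses only slots that were free after the deletion. For relations, the exterior relation is now exactly $R_0$. A table mode adds at most one extra pair, namely $p_a\to p_b$ or $q_b\to q_a$ (or their $a$-versions for ears). The union of $R_0$ with one such pair has no closed walk, since in $R_0$ every $p$ is a source and every $q$ is a sink. So the third condition of Lemma~\ref{lem:gluing} holds. When $ab$ was already unselected, Lemma~\ref{lem:modes} directly gives a dominating mode with no greater occupation and a contained relation, so gluing with the unchanged $M_Q$ is preserved.

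The main obstacle is the arrow count in the selected-root case. One must make sure that the freed slot really yields a strip mode of cost one lower than the dominating mode of $M_H'$. If this fails, I would instead argue directly that the old $M_H\cup\{ab\}$ has cost at least that of some table mode plus one, exploiting that $M_H$ without the root has at least one more component. In the serpent case with $w=0$, Lemma~\ref{lem:serpent-geometry} constrains the directions, and the inward/outward modes $\Iroot\Iroot,\Oroot\Oroot$ would need slots that the root arrow had occupied; that case requires the most care.
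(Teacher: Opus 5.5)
\textbf{Assessment.} Your skeleton is the paper's. You delete the shared arrow from the exterior and apply Lemma~\ref{lem:root-delete} to get relation $R_0$ and occupation $(\alpha,0,0,\beta)$. You then insert a root-free table mode that uses only free slots, check acyclicity because $R_0$ has only column-to-row pairs, and handle the other root direction by symmetry. Your relation check is correct. The gap is the step you flag yourself: the arrow count is never established, and the serpent part of your proposed case check is false as stated.

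\textbf{What the count needs.} Comparing with the dominating mode of $M_H\setminus\{a\to b\}$ is a detour. The clean target is an absolute bound. The strip contains a triangle, so by Lemma~\ref{lem:linear-forest} its induced forest on $m+2$ vertices has at least two components, and $|M_H|\le m$ with the root included. It then suffices to find a mode with $|M_H|$ arrows that fits the free slots.
\begin{itemize}
\item \emph{Ear.} Here $M_H=\{a\to b\}$, and $a\Oroot$ works.
\item \emph{Fan.} Use the full nonkey mode through a freed slot: $b\Iroot$ if $b$ is the nonkey, but $a\Oroot$ if $a$ is the nonkey. You treated only the first placement. Reducing $b\to a$ to $a\to b$ by renaming also swaps key and nonkey, so both placements must be handled for a fixed arrow direction.
\end{itemize}

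\textbf{The serpent case.} Your claim that $a\Oroot$ or $b\Iroot$ ``lowers the cost by one'' fails when $|M_H|=m$. A one-root serpent mode has only $m-1$ arrows, so you need $\Iroot\Iroot$ or $\Oroot\Oroot$.

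What can block these is not the root arrow: its slots $o_a,i_b$ are exactly the freed ones. The obstruction is the exterior. $\Iroot\Iroot$ is available iff $\alpha=0$, and $\Oroot\Oroot$ iff $\beta=0$.

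The missing idea is the remaining case $\alpha=\beta=1$.
\begin{itemize}
\item In $M$, all four root slots are then filled by the exterior and by $a\to b$. Hence no strip arrow other than $ab$ meets $a$ or $b$.
\item The edge $ab$ is therefore a whole component of the local cover.
\item The $m$ interior vertices contain a triangle, so they carry at most $m-2$ arrows.
\item Hence $|M_H|\le 1+(m-2)=m-1$, and a one-root mode $a\Oroot$ or $b\Iroot$ through a freed slot restores the count.
\end{itemize}
Lemma~\ref{lem:serpent-geometry} is not the right tool here. It concerns root-free covers, whereas $M_H$ contains the selected root. With this three-way split on $(\alpha,\beta)$, the count $|M'|=(|M_Q|-1)+|M_H'|\ge|M|$ holds in every case.
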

\begin{proof}
If the root is unselected, Lemma~\ref{lem:modes} and Lemma~\ref{lem:gluing} give the assertion. Suppose that the selected root is $a\to b$. Let $M_H,M_Q$ be the restrictions of the global arrow set, with the common arrow recorded on each side. Delete it from $M_Q$. By Lemma~\ref{lem:root-delete}, the remaining exterior has relation $R_0$ and occupation
\[
 (\alpha,0,0,\beta),\qquad \alpha,\beta\in\{0,1\}.
\]
We construct a root-free mode $M_H'$ with $|M_H'|\ge|M_H|$ that fits these four slots.

An ear has at most one local arrow, and its $a\Oroot$ mode has one, in a newly free outgoing slot. A fan has at most $m$ arrows. If its nonkey is $b$, use $b\Iroot$; if its nonkey is $a$, use $a\Oroot$. Each has $m$ arrows and uses only a slot freed by the root deletion.

For a serpent, when $\alpha=0$ both incoming slots are free, so use $\Iroot\Iroot$. When $\beta=0$ both outgoing slots are free, so use $\Oroot\Oroot$. These modes again have $m$ arrows. The only uncovered occupation is $\alpha=\beta=1$. The exterior then occupies the incoming slot at $a$ and the outgoing slot at $b$, while the old root arrow occupies the other two. Hence no other local arrow can meet either root. The edge $ab$ is an entire path component of the old local cover. The interior contains a triangle, so its $m$ vertices have at most $m-2$ selected edges. Therefore
\[
 |M_H|\le1+(m-2)=m-1.
\]
Either $a\Oroot$ or $b\Iroot$ is available and has $m-1$ arrows.

In each case the new mode adds to $R_0$ at most one column-to-column or row-to-row relation. That union has no cycle, since $R_0$ goes only from columns to rows and contains no row-to-column path. Thus it is compatible with the unchanged exterior after deletion. Counting the shared arrow once gives
\[
 |M'|=(|M_Q|-1)+|M_H'|
 \ge |M_Q|+|M_H|-1=|M|.
\]
For the opposite root arrow, reverse every arrow, apply the proved case, and reverse back. Reversal preserves executability, and the list of modes is closed under this operation. No exterior arrow other than the root has been changed.
\end{proof}
The theorem may be applied successively to terminal strips with disjoint interiors. Starting from an optimum, a nondecrease in arrow number preserves optimality.

\subsection{An exact neutral replacement}
\begin{theorem}\label{thm:neutral}
Let $Q$ contain an edge $ab$. Add a new vertex $v$, a triangle $avb$, a fan along $av$ keyed at $a$, and a fan along $vb$ keyed at $b$. Each fan has at least two faces, and its interior is disjoint from $Q$ and from the other fan. Call the new rooted block $N$. Then
\[
 Z(Q\cup_{ab}N)=Z(Q)+1.
\]
Every prescribed exterior protocol has an extension with exactly one additional seed and no extension can have smaller additional cost.
\end{theorem}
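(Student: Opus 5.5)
The plan is to prove the protocol-level statement directly and then take the minimum over exterior protocols. Fix an executable arrow set $M_Q$ on $Q$ viewed as a block rooted at $ab$, with boundary state $\sigma_Q(M_Q)$. Write $F_1$ for the fan on $av$ (key $a$, nonkey $v$) with $m_1\ge2$ interior vertices, and $F_2$ for the fan on $vb$ (key $b$, nonkey $v$) with $m_2\ge2$ interior vertices. The block $N$ adds $1+m_1+m_2$ new vertices. The additional cost of an extension $M\supseteq M_Q$ is the number of new vertices that are not targets, which equals $1+m_1+m_2$ minus the number of arrows of $M$ lying in $N$ other than $ab$. Both inequalities are proved for this local count.

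\emph{Lower bound.} Take any executable global extension $M$. The fans $F_1,F_2$ are terminal strips of $Q\cup_{ab}N$ with root edges $av,vb$ and disjoint interiors. Apply Theorem~\ref{thm:terminal-normal} to each in turn. This does not decrease the number of arrows, changes no arrow outside $F_i$ except its root edge, and so leaves $M_Q$ untouched, since $av,vb\notin E(Q)$. After the replacement, $av$ and $vb$ are unselected and each $F_i$ carries a table mode, so it has at most $m_i$ arrows because $w\ge0$. Apart from $ab$, the only edges of $N$ are $av$, $vb$ and the fan edges, so $N$ contributes at most $m_1+m_2$ arrows. Hence at least one new vertex is a seed, and no extension has additional cost below $1$.

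\emph{Upper bound.} Leave $M_Q$ and the edges $av,vb$ as they are, with $av,vb$ unselected. On $F_1$ use the cost-zero mode that enters its nonkey $v$ (in the table, the mode $b\Iroot$ with roots renamed $(a,v)$), which has extra relation $p_a\to p_v$. On $F_2$ use the cost-zero mode leaving $v$ ($b\Oroot$ with roots $(b,v)$), which has extra relation $q_v\to q_b$. Together these give $m_1+m_2$ arrows forming one path through $v$. No arrow meets $a$ or $b$, so the occupation inequalities of Lemma~\ref{lem:gluing} hold against every exterior occupation, and the additional cost is exactly $1$.

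It remains to check acyclicity, which I expect to be the main obstacle. The check computes the positive-length relation induced on $B_{ab}$ by $D_N$. This digraph consists of the two fan relations above, the triangle $avb$ with its unselected sides $av,vb$ (incidences $p_a\to q_v$, $p_v\to q_a$, $p_b\to q_v$, $p_v\to q_b$, $p_v\to q_v$), and the root $ab$ as prescribed by $M_Q$. The fans meet only at $v$, and their relations run $p_a\to p_v\to q_v\to q_b$. Every path between ports of $\{a,b\}$ through the new vertices therefore runs from a column port $p_a$ or $p_b$ to a row port $q_a$ or $q_b$.

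If $ab$ is unselected, the relation is contained in $R_0$, which the exterior already contains, and Lemma~\ref{lem:gluing} applies. If $ab$ is selected, say $a\to b$, the only new relations besides column-to-row ones are the root chain $p_a\to q_a\to p_b\to q_b$ and $p_v\to q_a$. I would verify by a short case check that adjoining them creates no closed walk. Any cycle would have to return from a row port to a column port inside $N$. The only such arc is the shared root arc $q_a\to p_b$, which is also present in $D_Q(M_Q)$; there it already lies on the acyclic chain, and $N$ adds no path from $q_b$ or $p_b$ back to $p_a$ or $q_a$. The case $b\to a$ follows by Lemma~\ref{lem:deletion} (reversal), since the chosen fan modes are exchanged by reversal, up to swapping the roles of the two fans.

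Finally, minimizing over exterior protocols $M_Q$ on $Q$ gives $Z(Q\cup_{ab}N)=Z(Q)+1$ via \eqref{eq:arrow-opt}.
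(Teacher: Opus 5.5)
Your proposal is correct and follows essentially the paper's proof. The lower bound applies Theorem~\ref{thm:terminal-normal} to both fan roots, which leaves $M_Q$ untouched, so each fan carries at most $m_i$ arrows and $av,vb$ carry none. The upper bound directs the fan paths into and out of $v$; the induced relation on the ports of $a,b$ then consists only of column-to-row pairs, which lie in $R_0$ or in the root chain. Your global-reversal treatment of $b\to a$ is equivalent to the paper's choice of the reversed pair of paths, which gives the relation $p_b\to q_a$ inside the chain $p_b<q_b<p_a<q_a$.
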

\begin{proof}
Let the two fan interiors have $m_1,m_2$ vertices. Direct their fixed
nonkey paths into $v$ from the left and out of $v$ to the right. The
incoming mode on the fan rooted at $av$ adds $p_a\to p_v$; the outgoing
mode on the fan rooted at $bv$ adds $q_v\to q_b$. The unselected fan roots supply the column-to-row incidences in
\eqref{eq:R0}. Eliminating the ports of $v$
therefore adds the path
\[
 p_a\longrightarrow p_v\longrightarrow q_v\longrightarrow q_b,
\]
and no other extra relation between the ports of $a,b$. The modes use $v$
once as a target and once as a source, and use no slot at $a,b$.
Reversing both paths, transposing the incidence graph and interchanging
$p,q$ gives instead $p_b\to q_a$.

With the root unselected, both possible extra relations belong to $R_0$.
With root $a\to b$, the first belongs to the root chain; with root
$b\to a$, the second does. Hence every prescribed exterior protocol
extends. There are $m_1+m_2+1$ new vertices and $m_1+m_2$ new arrows,
giving one extra seed.

For a lower bound apply Theorem~\ref{thm:terminal-normal} to the two fan roots. This keeps all arrows of $Q$ unchanged. Each fan then has at most $m_i$ arrows, and there is no other new core arrow. The restricted exterior remains executable. Hence
\[
 |V(Q\cup N)|-|M|\ge |V(Q)|-|M_Q|+1\ge Z(Q)+1.
\]
The same count holds with the exterior protocol prescribed.
\end{proof}

\begin{lemma}\label{lem:ear-fan}
For a one-triangle ear $T$ on $ab$ and every exterior $Q$,
\[
 Z(Q\cup T)=\min\{Z(Q\cup F_a),Z(Q\cup F_b)\},
\]
where $F_a,F_b$ are two-face fans keyed at the indicated root.
\end{lemma}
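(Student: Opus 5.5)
Both sides are computed by reducing to the boundary data of the terminal strip at $ab$. By Theorem~\ref{thm:terminal-normal}, applied to an optimal arrow set on $Q\cup T$, $Q\cup F_a$ or $Q\cup F_b$, we may assume the root $ab$ is unselected and the strip carries a table mode, with the arrows of $Q$ other than $ab$ unchanged. Write $M_Q$ for the exterior part, with its occupation at $a,b$ and its root relation. Since $T$, $F_a$ and $F_b$ each add the same number of vertices as they have faces, the forcing number of each union is $|V(Q)|-|M_Q|+w$ plus a constant common to the three graphs, with $w$ the cost of the mode used. Every term is therefore normalized by $w$, and Lemma~\ref{lem:gluing} decides which modes are compatible with $M_Q$.

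First compare the tables. The ear has five modes, each of cost at most one: empty with $w=1$, and $a\Iroot$, $a\Oroot$, $b\Iroot$, $b\Oroot$ with $w=0$. For a fan keyed at $b$ (nonkey $a$), the roles of $a$ and $b$ in the fan table are swapped. Its modes are empty with $w=1$, and $a\Iroot$, $a\Oroot$ with $w=0$. Their occupations are $1000$ and $0010$, and their extra relations are $p_b\to p_a$ and $q_a\to q_b$, exactly those of the ear entries $a\Iroot$ and $a\Oroot$. Symmetrically, $F_a$ has nonkey $b$ and reproduces the ear entries $b\Iroot$ and $b\Oroot$ together with the empty mode. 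So the mode set of $T$ is precisely the union of the mode sets of $F_b$ and $F_a$, with identical occupations, relations and costs.

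The inequality $\le$ then follows. Take an optimal normalized protocol on $Q\cup F_a$, say. Keep $M_Q$ and replace the fan mode by the ear mode with the same data. Lemma~\ref{lem:gluing} still certifies the gluing, since its conditions depend only on $\eta$, the occupation and the root relation. The cost $w$ is unchanged, so $Z(Q\cup T)\le Z(Q\cup F_a)$, and likewise for $F_b$.

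For $\ge$, take an optimal normalized protocol on $Q\cup T$. Its ear mode lies in the table of $F_a$ or of $F_b$: the empty mode lies in both, and each nonempty mode lies in the fan whose nonkey is the occupied root. Transplanting it gives the reverse inequality for that fan, hence for the minimum. Since all the cost bookkeeping cancels, the main obstacle is confirming that the fan table is transported correctly under the swap of roots. The fan entries were computed for the ordered pair with nonkey $b$, so for $F_b$ one must check that relabelling $a\leftrightarrow b$ turns $p_a\to p_b$ into $p_b\to p_a$, and similarly for the row relation. This agrees with the ear's $a\Iroot$ and $a\Oroot$ relations as listed. The other point to check is that the constant vertex offsets match: the ear contributes one vertex and the two-face fans two, which is absorbed into the normalization $w=m-|M_H|$.
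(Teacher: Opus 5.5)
Your proposal is correct and follows essentially the same route as the paper. You normalize the common root by Theorem~\ref{thm:terminal-normal}, match each ear mode with a mode of the same occupation, root relation and normalized cost $w=m-|M_H|$ (from the fan whose nonkey is the occupied root, or from either fan for the empty mode), and transplant modes in both directions via the gluing criterion. Your seed-count formula $|V(Q)|-|M_Q|+w$ is the paper's observation that one interior vertex and one selected arrow are added or removed together.
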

\begin{proof}
Normalize the common root in an optimal protocol on $Q\cup T$ by
Theorem~\ref{thm:terminal-normal}. If its ear mode uses a root $v$, choose
the two-face fan whose nonkey is $v$. The corresponding fan mode has the
same occupation, root relation, and normalized cost $m-|M_H|$. If the ear
mode is empty, either fan has its empty mode, again with the same data.
Replacing the ear adds one interior vertex and one selected arrow. The
global seed number $|V|-|M|$ is therefore unchanged, and the gluing criterion
preserves executability. This proves
\[
 \min\{Z(Q\cup F_a),Z(Q\cup F_b)\}\le Z(Q\cup T).
\]
Conversely normalize any optimal protocol on either $Q\cup F_a$ or
$Q\cup F_b$. Every fan table mode compresses to an ear mode with the same
occupation, relation, and normalized cost. Both one vertex and one arrow
are removed, so the seed number is unchanged. The resulting protocol lies
on $Q\cup T$, proving the opposite inequality for each fan separately.
Together these inequalities give the claimed minimum.
\end{proof}

\subsection{The canonical core}\label{sec:core}
Let $T_G$ be the weak dual, with one node per bounded triangular face. It is a tree of maximum degree three. If it has no degree-three node, $G$ is a strip; the two-root construction and minimum degree two give $P(G)=Z(G)=2$.

Otherwise let $T_0$ be the smallest connected subtree containing every degree-three node, and let $C$ be the union of its triangular faces. Every component of $T_G\setminus T_0$ is a terminal path attached by one edge. It gives a terminal strip $H_i$ on a boundary edge of $C$.

\begin{lemma}\label{lem:core}
The graph $C$ is a polygon triangulation. Distinct removed strips have disjoint interiors and meet $C$ only in their roots. If $\ell$ is the number of weak-dual leaves and $d_2$ is the number of original degree-two dual nodes retained in $T_0$, then
\[
 \#\{H_i\}=\ell,\qquad |V(C)|=\ell+d_2.
\]
\end{lemma}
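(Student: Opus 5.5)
The plan is to read everything off the weak dual $T_G$, using two standard facts about polygon triangulations. First, the faces of a connected subtree of the weak dual form a triangulated convex subpolygon. Second, every internal diagonal splits the polygon into two sides that meet only in the diagonal's endpoints. The counts then come from the handshake identity in $T_G$, using that a triangulated $k$-gon has $k-2$ faces.

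\emph{Step 1 (the core is a polygon).} I would induct along a leaf-removal order of $T_0$. A single face is a triangle. Adding a face adjacent to the current union across a dual edge glues a triangle along a side of the current convex subpolygon. In the fixed convex embedding, the new vertex lies on the far side of that diagonal, so the union is again a convex polygon triangulated by its faces. Hence $C$ is a polygon triangulation, and its outer sides are exactly the outer edges of $G$ in $C$ together with the diagonals dual to the edges leaving $T_0$.

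\emph{Step 2 (the strips).} Let $K$ be a component of $T_G\setminus T_0$, and let $\Delta\Delta'$ be its attaching dual edge, with $\Delta\in T_0$ and $\Delta'\in K$.
\begin{itemize}
\item $K$ has only one attaching edge: two such edges, together with paths in $T_0$ and in $K$, would close a cycle in the tree $T_G$.
\item $K$ is a path: it contains no degree-three node of $T_G$, since all of these lie in $T_0$.
\item The attaching node $\Delta'$ is an end of this path. If it were interior to the path, it would have $T_G$-degree three, contradicting the previous point.
\end{itemize}
So $K$ is a chain of triangles attached along the diagonal $e$ dual to $\Delta\Delta'$, which is precisely a strip in the sense of Definition~\ref{def:strip}, rooted at $e$. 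The diagonal $e$ separates $G$ into two sides meeting only at the ends of $e$. All faces of $K$ lie on the far side and all faces of $C$ on the near side, so $H_i\cap C$ is the root edge. Two distinct strips lie on the far sides of their own roots. Each such far side is contained in the near side of the other root, so their interiors are disjoint.

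\emph{Step 3 (counting).}
\begin{itemize}
\item \emph{Strips versus leaves.} The other end of each path $K$ has $T_G$-degree one, since it has no neighbour in $T_0$. So each strip contains exactly one leaf of $T_G$. Conversely, no leaf lies in $T_0$, because the leaves of a minimal subtree spanning a set of nodes belong to that set, and here that set consists of degree-three nodes. Thus strips correspond bijectively to leaves, giving $\#\{H_i\}=\ell$.
\item \emph{Faces of $C$.} Every node of $T_0$ is a degree-three node or a retained degree-two node, so $T_0$ has $n_3+d_2$ nodes, where $n_3$ is the number of degree-three nodes.
\item \emph{Handshake.} In a tree, $\sum(\deg-2)=-2$. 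With maximum degree three this gives $\ell=n_3+2$.
\item \emph{Vertices of $C$.} A triangulated polygon with $f$ faces has $f+2$ vertices, so $|V(C)|=n_3+d_2+2=\ell+d_2$.
\end{itemize}

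\emph{Main obstacle.} The only delicate point is the geometric one in Step 2: that separation by the root diagonal really keeps strips off the rest of $C$ and away from each other. I would settle it with the convex-position argument already used in Lemma~\ref{lem:linear-forest}, namely that a chord of a convex polygon splits its vertex set into two arcs sharing only the chord's endpoints.
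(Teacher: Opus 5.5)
Your proposal is correct and follows the paper's proof essentially step for step: you build $C$ face by face across separating diagonals, show that each component of $T_G\setminus T_0$ has one attachment, is a path attached at an end, and contains exactly one leaf while $T_0$ contains none, and then combine the degree count $\ell=n_3+2$ with ``$f$ faces give $f+2$ vertices.'' The only variation is that you get disjointness of strip interiors from the separating root diagonals, whereas the paper uses that the faces around a vertex form a dual path meeting $T_0$ in a connected set; one small nit is that when a component is a single ear, its ``other end'' is the attaching node itself, which still has $T_G$-degree one.
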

\begin{proof}
Root the finite dual subtree $T_0$ at one of its triangles and add its faces in parent-before-child order. A joining dual edge corresponds to a diagonal of the original polygon. This diagonal separates the original polygon into two closed polygonal regions meeting exactly in that edge. All faces already attached on the parent's side lie in one region, and the next face lies on the other side. Its third vertex is therefore new to the union. Induction shows that $C$ is a triangulated polygon, with one more vertex at every added face.

The faces incident with any original vertex form a path in $T_G$: consecutive faces in their circular order share precisely the incident diagonal separating them. A path and a connected subtree of a tree have connected intersection. If a vertex outside $C$ belonged to two different components of $T_G\setminus T_0$, the dual path of its incident faces would pass through $T_0$, placing it in $C$, a contradiction. Similarly, the separating diagonal at the attachment of a removed component shows that its only vertices in $C$ are the two root endpoints. Thus the strip interiors are disjoint and have exactly the claimed intersections.

No removed component contains a degree-three node, since all such nodes lie in $T_0$. Each component attaches to $T_0$ by one dual edge; attaching by two would create a cycle in $T_G$. It is consequently a path, ending in exactly one original leaf. Conversely $T_0$ contains no original leaf: a leaf cannot lie on a path between two degree-three nodes, and if there is only one such node then $T_0$ is that node. Hence the removed components correspond bijectively to all $\ell$ original leaves.

Let $b$ be the number of degree-three nodes and $d$ the total number of degree-two nodes in $T_G$. Its degree sum gives
\[
 \ell+2d+3b=2(\ell+d+b-1),\qquad b=\ell-2.
\]
The nodes of $T_0$ consist of those $b$ nodes and its $d_2$ retained degree-two nodes. Thus $C$ has $b+d_2$ triangular faces and
\[
 |V(C)|=b+d_2+2=\ell+d_2.
\]
\end{proof}

Let $\cR$ be the set of root edges of the nonempty strips. A core
boundary edge outside $\cR$ is an original outer edge; we call it a
bare side. Bare sides are eligible for core arrows. The core order is
$|V(C)|=\ell+d_2$, and equals $\ell$ when $d_2=0$.

\section{Causal boundary states and weighted composition}\label{sec:responses}

\subsection{The 56 protocol states}
A rooted block includes its ordered root edge. Its boundary state is
$\sigma_H(M)$ as defined in Section~\ref{sec:strips}. Relations in this
section are transitively closed and contain no diagonal pair.

Let $E$ be the single-edge block with an ordered root. Given rooted blocks
$H,K$, place disjoint copies on the ordered roots $(a,c)$ and $(c,b)$,
identify their common vertex $c$, and add the edge $ab$. The resulting
block, rooted at $(a,b)$, is denoted by $T(H,K)$. Set
\[
 L(H)=T(H,E),\qquad R(H)=T(E,H).
\]
As a terminal type, $T$ denotes the one-triangle block $T(E,E)$.

\begin{lemma}\label{lem:56}
For a rooted polygon triangulation, the root relation has one of the following seven forms:
\[
\begin{array}{c|l|l}
0&R_0&\text{no additional occupation condition}\\
C_+&R_0\cup\{p_a\to p_b\}&i_b=1\\
C_-&R_0\cup\{p_b\to p_a\}&i_a=1\\
R_+&R_0\cup\{q_a\to q_b\}&o_a=1\\
R_-&R_0\cup\{q_b\to q_a\}&o_b=1\\
E_+&p_a<q_a<p_b<q_b&o_a=i_b=1\\
E_-&p_b<q_b<p_a<q_a&o_b=i_a=1.
\end{array}
\]
Together with binary occupations these give a closed set $\Sigma$ of 56
states. Every executable arrow set on a rooted polygon triangulation has
a state in $\Sigma$. Conversely, each $s\in\Sigma$ is realized by an
executable arrow set on some rooted polygon triangulation; the realizing
polygon may depend on $s$.
\end{lemma}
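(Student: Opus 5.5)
The plan is to prove all three assertions by induction along the triangle composition $T(H,K)$. Every rooted polygon triangulation other than the single edge $E$ has the form $T(H,K)$, where $c$ is the apex of the root triangle $abc$ and $H,K$ are the rooted polygon triangulations (possibly $E$) on $(a,c)$ and $(c,b)$.

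\textbf{Base case and general constraints.} For $E$ the states are computed directly. With $ab$ unselected the incidence digraph consists of $p_a\to q_a$, $p_b\to q_b$, $p_a\to q_b$ and $p_b\to q_a$, which is exactly $R_0$. With $a\to b$ selected the chain is $p_a\to q_a\to p_b\to q_b$, giving $E_+$; symmetrically $b\to a$ gives $E_-$. In general, a selected root arrow forces the chain $E_\pm$. Since the relation is transitively closed and acyclic, this chain is the entire relation, and its occupation conditions $o_a=i_b=1$ (or $o_b=i_a=1$) hold automatically.

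If the root is unselected, $R\supseteq R_0$ because the four arcs of \eqref{eq:R0} are incidence arcs. Any further pair must then be column-to-column or row-to-row. A row-to-column pair such as $q_a\to p_b$ would close a cycle with $p_b\to q_a$.

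\textbf{Occupation conditions.} These follow from the source/sink principle used in Lemma~\ref{lem:root-delete}. A path entering $p_b$ must use an arc $q_u\to p_b$ with $u\to b$ selected, so $i_b=1$. A path leaving $q_a$ must use $q_a\to p_v$ with $a\to v$ selected, so $o_a=1$. The other two cases are symmetric.

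\textbf{At most one extra pair.} This is the real content. Acyclicity already excludes $\{p_a\to p_b,\,p_b\to p_a\}$ and $\{q_a\to q_b,\,q_b\to q_a\}$. It does not exclude mixed pairs such as $p_a\to p_b$ together with $q_a\to q_b$ or $q_b\to q_a$. For those I would use the induction rather than a direct planarity argument. Compute the state of $T(H,K)$ from a state of $H$ on $(a,c)$, a state of $K$ on $(c,b)$, and the choice on $ab$. Take the union of the two relations with the arcs contributed by the edge $ab$, form its transitive closure as in Lemma~\ref{lem:gluing}, and restrict to $B_{ab}$, which eliminates $p_c,q_c$. One then checks that every compatible pair of states in $\Sigma$ yields a state in $\Sigma$. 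Compatibility means occupations summing to at most $1$ at $c$ and an acyclic closure.

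This finite check is where I expect the main obstacle: it is a case analysis of $56\times56\times3$ combinations. I would organize it by noting that an extra pair at $ab$ can only arise through $c$. For example, $p_a\to p_b$ must pass through $p_c$ or $q_c$, which requires a column-type relation into $c$ from $H$ and a matching one out of $c$ into $b$ from $K$. The arrow budget of at most one in and one out at $c$ then permits only one such passage. Symmetry under reversal (Lemma~\ref{lem:deletion}) and under swapping $a,b$ reduces the work substantially. The closure under composition is exactly the statement that $\Sigma$ is closed, and by induction every executable arrow set has a state in $\Sigma$.

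\textbf{Counting.} For $0$ all $16$ occupation vectors are allowed. Each of $C_\pm$ and $R_\pm$ fixes one coordinate, giving $8$ each and $32$ in total. Each of $E_\pm$ fixes two, giving $4$ each. The total is $16+32+8=56$.

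\textbf{Realization.} Here the point is to exhibit a realizing polygon for each state, with the polygon allowed to depend on the state. The strip table from Lemma~\ref{lem:modes} already realizes $C_\pm$, $R_\pm$ and $0$ with minimal occupations, and $E$ realizes $E_\pm$ with minimal occupations. To raise a further occupation coordinate without changing the relation, glue $T(H,K)$ with a fan or ear attached on the side $ac$ or $cb$. This uses a mode occupying the desired slot at $a$ or $b$ while routing only through $c$ in a way that adds no new pair at $ab$, which is checked by the same composition rule. Enumerating these compositions finitely, again up to the two symmetries, covers all $56$ states.
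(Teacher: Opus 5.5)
Most of your outline is the paper's proof. The shared parts are the induction over $T(H,K)$, the edge states $R_0,E_\pm$, the total order forced by a selected root, and $R_0$ plus only column--column or row--row pairs otherwise. So are the source/sink derivation of the occupation conditions, the count $16+32+8$, and (as in the paper) finite recursive witnesses for the converse.

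The gap is at the step you single out as the real content: the arrow budget at $c$ does not exclude a column pair together with a row pair. A column pair through $c$ spends the slot $i_c$; for example, $p_a\to p_c$ in the left child needs $i^L_c=1$. A row pair spends $o_c$ instead; for example, $q_c\to q_b$ in the right child needs $o^R_c=1$. So the budget $i_c,o_c\le1$ permits one passage of each kind at once, which is precisely the mixed configuration. In the case of one selected child root the budget plays no role at all. Take $a\to c$ selected and the right child in state $C_+$, whose pair $p_c\to p_b$ costs $i_b$ and nothing at $c$. The parent would then get both $q_a\to q_b$ and $p_a\to p_b$, and only the cycle $q_a\to p_c\to p_b\to q_a$ through the unselected root arc $p_b\to q_a$ rules this out.

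The argument that works is the paper's split by the number of selected child roots.

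If both are selected, an equal source or target at $c$ breaks the budget. The chains $a\to c\to b$ and $b\to c\to a$ close $q_a\to p_c\to q_c\to p_b\to q_a$, resp.\ $q_b\to p_c\to q_c\to p_a\to q_b$.

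If exactly one is selected, the forced pair comes from that child's total order and the other child's $R_0$. A pair of the other kind is either impossible in that total order or closes a cycle through the unselected $ab$ as above.

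If neither is selected, neither child nor the new root supplies a row-to-column passage. So a column pair must run $p_a\to p_c\to p_b$ (or its reverse) through column passages of both children, and a row pair must likewise pass $q_c$ in both. Having both would give one child two extra pairs, contradicting the induction hypothesis.

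Actually running the $56^2\cdot 3$ enumeration would also settle the step, since the closure is true, but then the budget remark proves nothing.

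For the converse, attaching strips in the modes of Lemma~\ref{lem:modes} is not enough, because those modes never use a root twice. For example, $R_0$ with occupation $(1,1,1,1)$ needs a child on $(a,c)$ in which $a$ has selected degree two. One such child is a fan keyed at $a$ carrying $u_1\to a\to u_3$, which is in state $C_-$ with $i_a=o_a=1$; glue it to its mirror image on $(c,b)$.
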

\begin{proof}
For the auxiliary single edge, the three choices give $R_0,E_+,E_-$. Assume the assertion for the two children of a root triangle $acb$.

If the root arrow is $a\to b$, then the root itself supplies the chain
\[
 p_a\to q_a\to p_b\to q_b.
\]
Its transitive closure already contains every pair in this total order. Any additional pair contrary to the order would make a cycle. Therefore the entire relation is $E_+$. A root arrow $b\to a$ gives $E_-$.

Suppose the root is unselected. Then $R_0$ is present. A row-to-column relation is impossible, since $R_0$ contains its reverse. Beyond $R_0$, the only possible pairs are
\[
 p_a\to p_b,\quad p_b\to p_a,\quad
 q_a\to q_b,\quad q_b\to q_a.
\]
Opposite pairs cannot coexist. We prove that a column pair and a row pair cannot coexist either.

Both child roots cannot be selected. If their orientations have the same source or target at $c$, the corresponding occupation exceeds one. The other alternatives are chains $a\to c\to b$ and $b\to c\to a$. The first gives
\[
 q_a\to p_c\to q_c\to p_b\to q_a,
\]
where the last incidence uses the unselected edge $ab$; the second gives the reversed analogue. Both contradict acyclicity.

If exactly one child root is selected, its relation is a total order on its four ports. Suppose first it is $a\to c$. The left child gives $p_a<q_a<p_c<q_c$, and the right child's unselected root gives $p_c\to q_b$. Hence $q_a\to q_b$ is forced. A reverse row relation would immediately make a cycle. A column relation $p_b\to p_a$ cannot occur: a path entering the left child through $p_c$ or $q_c$ cannot move backwards to $p_a$ in its total order. For $p_a\to p_b$, a path must cross the interface through $p_c$ or $q_c$. The right child has no row-to-column passage, so it must provide $p_c\to p_b$. Then $q_a\to p_c\to p_b\to q_a$ is a cycle. Thus only $R_+$ occurs.

Interchanging $a,b$, or transposing the whole incidence graph and exchanging the row and column names, preserves all occupation and acyclicity conditions. These two operations give the other three cases without changing the argument:
\[
\begin{array}{c|c|c}
 \text{selected child root}&\text{forced extra relation}&\text{type}\\\hline
 a\to c&q_a\to q_b&R_+\\
 c\to a&p_b\to p_a&C_-\\
 c\to b&p_a\to p_b&C_+\\
 b\to c&q_b\to q_a&R_-.
\end{array}
\]

If neither child root is selected, neither child has a row-to-column passage. Nor does the unselected new root provide one. Consequently, any path from a column to a column can use only column-to-column passages. To go from the $a$ side to the $b$ side it must pass through $p_c$, using a column passage in each child. Similarly a row-to-row passage across the two children must use $q_c$ and a row passage in each. By induction neither child can provide both kinds. Thus the output has at most one additional pair and has one of the first five listed forms.

If $p_a\to p_b$ occurs, the last incidence entering $p_b$ must be a selected incoming arrow at $b$, so $i_b=1$. If $q_a\to q_b$ occurs, the first incidence leaving $q_a$ is a selected outgoing arrow at $a$, so $o_a=1$. The remaining requirements follow by the same two symmetries. The root chains require their selected source and target slots. Counting the admissible binary occupations gives
\[
 2^4+4\cdot2^3+2\cdot2^2=56.
\]
The preceding argument proves containment and closure. For the converse,
the finite certificate starts from the three states of $E$ and gives a
recursive triangle witness for each member of $\Sigma$. Evaluating these
witnesses with the six-port rule realizes all 56 states. The three edge
states are also realized on a triangle by leaving its third vertex initial.
These witnesses provide the exterior protocols used in
Proposition~\ref{prop:congruence}.
\end{proof}

Let $\eta(s)\in\{0,+,-\}$ be its root choice, determined by whether the relation is $E_+,E_-$, and put $\eps(s)=\one_{\eta(s)\ne0}$. For states $s_L,s_R$ at $(a,c),(c,b)$ and a new root choice $\eta$, define $\Phi(s_L,s_R,\eta)$ as follows. Add occupations at $a,c,b$, including the new root arrow; reject any value greater than one. Put both child relations on the six ports, add the root incidences of~\eqref{eq:incidence}, and take transitive closure. Reject a diagonal entry; otherwise restrict to the four root ports. Lemma~\ref{lem:56} shows that every accepted output belongs to $\Sigma$. Its full transition relation has 1,410 triples.

\subsection{Minimum-cost responses}
Products of matrices and vectors in this section use the min-plus semiring:
\[
 (A\otimes x)_t=\min_s(A_{t,s}+x_s),\qquad
 (A\otimes B)_{t,u}=\min_s(A_{t,s}+B_{s,u}).
\]
The value $+\infty$ denotes the absence of a finite candidate.
Define the raw vector
\[
 r_H(s)=|V(H)|-2-\max\{|M|:M\text{ executable on }H,\ \sigma_H(M)=s\},
\]
using $+\infty$ for an unrealized state. If $s,t$ are compatible in Lemma~\ref{lem:gluing}, write $s\sim t$, and set
\begin{equation}\label{eq:pairing-C}
 \mathsf C_{t,s}=\begin{cases}\eps(s),&s\sim t,\\+\infty,&s\not\sim t.\end{cases}
 \qquad
 \beta_H(t)=\min_s(\mathsf C_{t,s}+r_H(s)).
\end{equation}
Compatibility subtracts the one common root arrow from occupations. The same subtraction contributes the $+\eps(s)$ to the cost. There are 387 finite entries of $\mathsf C$.

\begin{proposition}\label{prop:response}
For rooted polygon blocks meeting only in their common included root,
\begin{equation}\label{eq:response-gluing}
 Z(H\cup_{ab}Q)=2+\min_t(r_Q(t)+\beta_H(t)).
\end{equation}
With an exterior protocol of state $t$ prescribed, $\beta_H(t)$ is exactly the part of the optimum depending on $H$.
\end{proposition}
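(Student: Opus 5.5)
The plan is to combine \eqref{eq:arrow-opt} with the gluing criterion, Lemma~\ref{lem:gluing}. Write $G=H\cup_{ab}Q$, so that $|V(G)|=|V(H)|+|V(Q)|-2$. An arrow set $M$ on $G$ restricts to arrow sets $M_H,M_Q$, each containing the common root arrow if that arrow is selected. We have $|M|=|M_H|+|M_Q|-\eps$, where $\eps$ indicates a selected root. By \eqref{eq:arrow-opt},
\[
 Z(G)=|V(G)|-\max|M| .
\]
Substituting $|V(G)|$ and $|M|$, the seed count of $M$ equals
\[
 2+\bigl(|V(H)|-2-|M_H|\bigr)+\bigl(|V(Q)|-2-|M_Q|\bigr)+\eps .
\]
So the task is to show that minimizing this expression over executable global $M$ is the same as minimizing $r_Q(t)+\mathsf C_{t,s}+r_H(s)$ over compatible pairs $s\sim t$.

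First I would establish the upper bound. Fix states $t$ and $s$ with $s\sim t$, each of finite value. Choose $M_Q$ attaining the maximum defining $r_Q(t)$, and $M_H$ attaining the maximum defining $r_H(s)$. Compatibility means exactly the three conditions of Lemma~\ref{lem:gluing}: the same $\eta$, occupations summing to at most $(1,1,1,1)$ after subtracting the common arrow, and no diagonal pair in $\TC(R_1\cup R_2)$. These conditions depend only on the two states, so the union is executable. Its seed count is $2+r_Q(t)+r_H(s)+\eps(s)$, and $\eps(s)=\mathsf C_{t,s}$. Minimizing over $s$ gives $\beta_H(t)$, and then minimizing over $t$ gives $\le$ in \eqref{eq:response-gluing}.

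Next the lower bound. Take an optimal executable $M$ on $G$. Its restrictions are executable, since $D_H(M_H)$ and $D_Q(M_Q)$ are subgraphs of $D_G(M)$. Let $t=\sigma_Q(M_Q)$ and $s=\sigma_H(M_H)$. By Lemma~\ref{lem:gluing}, $s\sim t$. By definition, $|V(Q)|-2-|M_Q|\ge r_Q(t)$, and similarly for $H$. Hence $Z(G)\ge 2+r_Q(t)+\mathsf C_{t,s}+r_H(s)\ge 2+r_Q(t)+\beta_H(t)$.

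For the prescribed-protocol statement, I would fix $M_Q$ of state $t$. The same two arguments show that the optimal completions $M_H$ are exactly the compatible $H$-sets. They contribute $\min_s(\mathsf C_{t,s}+r_H(s))=\beta_H(t)$ beyond the fixed $Q$-part. This contribution depends only on $t$, not on the internal structure of $M_Q$.

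The main point needing care is the lower-bound step: we must know that the restricted incidence digraph on each block is exactly the block's own incidence digraph. Each edge of $H$ contributes the same incidence arcs in $D_G(M)$ and in $D_H(M_H)$, because \eqref{eq:incidence} is edgewise and the blocks share only $ab$. The occupation bookkeeping at the shared root arrow is what produces the $+\eps(s)$ term. Compatibility is intrinsic to the states, because Lemma~\ref{lem:gluing} refers only to $\eta$, the occupations and $R$. Since every state lies in the finite set $\Sigma$ by Lemma~\ref{lem:56}, all minima are over finitely many entries and are attained.
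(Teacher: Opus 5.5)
Your proposal is correct and follows the paper's proof: the same vertex and arrow count with the $+\eps(s)$ correction, Lemma~\ref{lem:gluing} applied in both directions, and independent optimization inside each compatible state pair, with $r_Q(t)$ replaced by the fixed exterior cost in the prescribed case. You also spell out two points the paper leaves implicit. First, restrictions of a global executable set are executable because each $D_H(M_H)$ is the subgraph of $D_G(M)$ induced on the ports of $V(H)$. Second, compatibility depends only on the states.
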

\begin{proof}
For a fixed compatible pair of arrow sets $M_H,M_Q$, let $s,t$ be their states. The common root is selected on both sides or on neither. Hence
\[
 \begin{aligned}
 n_{H\cup Q}&=n_H+n_Q-2,\\
 |M_H\cup M_Q|&=|M_H|+|M_Q|-\eps(s).
 \end{aligned}
\]
Subtracting yields the cost
\[
 2+(n_H-2-|M_H|)+(n_Q-2-|M_Q|)+\eps(s).
\]
The compatibility criterion proves that each such pair is executable and every global executable set restricts to such a pair. For fixed $s,t$, independent minimizing choices in the two interiors can therefore be made simultaneously, giving $2+r_H(s)+r_Q(t)+\eps(s)$. Taking the minimum first over $s\sim t$, and then over $t$, proves the formula. For a prescribed $M_Q$, replace $r_Q(t)$ by its actual fixed cost $n_Q-2-|M_Q|$ in this calculation. The only remaining part to optimize is exactly $\beta_H(t)$.
\end{proof}
All entries of $\beta_H$ are finite: choose only the prescribed root arrow in $H$, or no arrow when the root is unselected, and leave every other local vertex initial. This has the minimal compatible boundary occupation and relation for that root choice.

For two raw vectors define
\begin{equation}\label{eq:triangle-F}
 \mathcal F(x,y)(s)=
 \min_{\Phi(u,v,\eta)=s}
       \bigl(x(u)+y(v)+1-\one_{\eta\ne0}\bigr).
\end{equation}
The root triangle composition $T(H,K)$ satisfies
\[
 r_{T(H,K)}=\mathcal F(r_H,r_K),
\]
because the two child blocks share one vertex and have disjoint edge sets, while the parent adds its root edge.

Put
\[
 \lambda_H=\min_t\beta_H(t),\qquad b_H=\beta_H-\lambda_H.
\]

\begin{proposition}\label{prop:congruence}
If $\beta_H=\beta_{H'}+d$, then for every rooted polygon block $K$,
\[
 \beta_{T(H,K)}=\beta_{T(H',K)}+d,
 \qquad
 \beta_{T(K,H)}=\beta_{T(K,H')}+d.
\]
\end{proposition}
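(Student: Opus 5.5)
The plan is to express $\beta_{T(H,K)}$ as a min-plus function of $\beta_H$ alone, with $K$ entering only through $r_K$, and to check that this function commutes with adding a constant. The tool is Proposition~\ref{prop:response} applied in two ways. For a state $t$ of an exterior protocol on $Q$ at the root $ab$ of $T(H,K)$, the quantity $\beta_{T(H,K)}(t)$ is the part of the optimum on $Q\cup_{ab}T(H,K)$ that depends on $T(H,K)$. The same glued graph can be read as $H$ glued along $ac$ to the exterior $Q'=Q\cup_{ab}T(E,K)$, that is, $Q$ together with the triangle $acb$ and $K$ on $cb$.

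First we fix $t$ and a state $v$ of $K$ at $(c,b)$, together with a root choice $\eta$ on $ab$ compatible with $t$. The two boundary problems decouple, and the contribution of $H$ should be exactly $\beta_H(u')$. Here $u'$ is the state, at the root $ac$, of the exterior protocol formed by $Q$, the triangle, and $K$. The key step is therefore to prove an identity
\[
 \beta_{T(H,K)}(t)=\min_{v,\eta,\,u'}\bigl(\Psi_{t,v,\eta}(u')+r_K(v)+\beta_H(u')\bigr).
\]
In this identity the kernel $\Psi$ takes values in $\{0,1,+\infty\}$ and is determined only by the 56-state gluing data $\Phi$, $\mathsf C$ and Lemma~\ref{lem:gluing}; it does not depend on $H$. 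Once the identity is available, replacing $\beta_H$ by $\beta_{H'}+d$ shifts every finite term by $d$, so $\beta_{T(H,K)}=\beta_{T(H',K)}+d$. We also use the remark after Proposition~\ref{prop:response} that all entries of $\beta$ are finite, so no $+\infty$ issues arise from $\beta_H$.

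To prove the identity, we start from $\beta_{T(H,K)}(t)=\min_s(\mathsf C_{t,s}+r_{T(H,K)}(s))$ and expand $r_{T(H,K)}=\mathcal F(r_H,r_K)$ using \eqref{eq:triangle-F}. This gives a minimum over $(u,v,\eta)$ with $\Phi(u,v,\eta)=s\sim t$ of $r_H(u)+r_K(v)+1-\one_{\eta\ne0}+\eps(s)$. We then regroup by the exterior state at $ac$. We compose $t$, the triangle with root choice $\eta$, and the state $v$ into an exterior state $u'$ on the six ports $a,b,c$. This is the transitive closure of the relations of $t$, $v$ and the triangle's incidences, restricted to the ports of $a,c$, with occupations summed. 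Lemma~\ref{lem:gluing} applied twice, once along $ab$ and once along $ac$, shows that the joint compatibility of $(u,v,\eta)$ with $t$ is equivalent to $u\sim u'$ together with acceptance of the composition producing $u'$.

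The main obstacle is the claim that acyclicity of the whole six-port closure depends on $u$ only through $u\sim u'$. This is an associativity statement for reachability: a cycle through the $H$-side ports is detected after first eliminating the $b$ ports. It holds because the union digraph is the same and transitive closure commutes with restriction along a cut vertex set $\{p_a,q_a,p_c,q_c\}$. The proof of Lemma~\ref{lem:gluing} gives precisely this when the breaking of cycles is performed at $B_{ac}$. The constant bookkeeping (the $\eps$ terms and the $1-\one_{\eta\ne0}$ term) then has to be matched with $\mathsf C_{u',u}=\eps(u)$. A term absorbed into $\Psi$ is harmless as long as it is independent of $H$. If the direct algebra becomes messy, the fallback is to argue semantically with Proposition~\ref{prop:response}, prescribing an actual exterior protocol realizing each state. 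These exist by the converse part of Lemma~\ref{lem:56}. Its witnesses were flagged there for exactly this use.

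The second identity, $\beta_{T(K,H)}=\beta_{T(K,H')}+d$, follows by the same argument with $H$ on the right child. It can also be obtained from the first by the symmetry interchanging $a,b$, since this symmetry maps $T(H,K)$ to $T(K,H)$ and permutes $\Sigma$ compatibly with $\mathsf C$.
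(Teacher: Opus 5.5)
Your proof is correct, and its central move is the paper's: treat $K$, the parent triangle and the exterior together as one exterior of $H$ rooted at $ac$. Then $H$ enters each candidate only through $\beta_H$ of the state on that root. The difference is where this regrouping is carried out.

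The paper argues semantically, which is exactly your fallback. For each $t$ it fixes an actual witness protocol $Q_t$; this is the realizability half of Lemma~\ref{lem:56}, which is established only by the finite certificate. It then removes the interior of $H$ and applies the prescribed form of Proposition~\ref{prop:response} to $H$ inside the concrete polygon formed by $K$, the triangle and $Q_t$. Every candidate there costs an $H$-independent amount plus $\beta_H(u)$, so no identity about abstract state composition is needed.

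Your main line stays inside the finite algebra, and its key fact is associativity of the six-port closure. $R_t$ already contains the incidences of $ab$, and $R_u$ those of $ac$. So both groupings test acyclicity of the same union $R_u\cup R_v\cup R_t$, broken at $B_{ab}$ in one case and at $B_{ac}$ in the other. Two points should be made explicit:
\begin{enumerate}
\item $u'$ must carry the arrow on $ac$: the minimum over $u'$ includes this choice, with $\eta(u')=\eta(u)$ and the shared arrow counted once.
\item $u'\in\Sigma$: $u'$ is $\Phi$ applied to $t$ and the root reversal of $v$, and $\Sigma$ is closed under $\Phi$ and under reversal.
\end{enumerate}
Since $\eta=\eta(t)$ is forced, the original cost is $r_H(u)+r_K(v)+1$, so $\Psi_{t,v,\eta}(u')=1-\eps(u')$ on its support.

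In return, your route does not use the realizability witnesses and proves more. It gives $\beta_{T(H,K)}=\mathsf N_K\otimes\beta_H$ for a min-plus matrix $\mathsf N_K$ that depends only on $r_K$. When $K$ is the single-edge block, this is Lemma~\ref{lem:reciprocity}, and the congruence follows at once.

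One small correction concerns the second identity. The reflection sends $T(H,K)$ to $T(\bar K,\bar H)$, where the bar reverses the root order, not literally to $T(K,H)$. The hypothesis transfers to $\bar H,\bar H'$ and $K$ is arbitrary, so the conclusion is unaffected.
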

\begin{proof}
Fix a parent exterior state $t$ and an actual executable exterior protocol $Q_t$ realizing it. Such witnesses for all states were included in Lemma~\ref{lem:56}. In the graph obtained by gluing $Q_t$ to $T(H,K)$, hold the arrows of $Q_t$ fixed and optimize all other arrows.

Remove the interior of $H$. The remaining graph $E$ consists of $K$, the parent triangle and $Q_t$, rooted at the former root of $H$. It is again a polygon triangulation. Every candidate arrow set on $E$ which restricts to the prescribed arrows of $Q_t$ is either locally nonexecutable and contributes no global solution, or is executable and has some state $u$. For such a candidate, put
\[
 a_E=|V(E)|-2-|M_E|.
\]
By the prescribed version of Proposition~\ref{prop:response}, its best extension through $H$ has total cost
\[
 2+a_E+\beta_H(u).
\]
Replacing $H$ by $H'$ does not change the graph $E$, its candidate arrows, or $a_E$. It replaces this cost by
\[
 2+a_E+\beta_{H'}(u)=2+a_E+\beta_H(u)-d.
\]
Every candidate changes by the same constant. Thus the optimum with the parent exterior prescribed changes by $d$. Subtract its fixed exterior cost, as in Proposition~\ref{prop:response}, to obtain the first asserted identity at coordinate $t$. Since $t$ was arbitrary, the first vector identity follows. Grouping the
opposite child with the triangle and exterior proves the second.
\end{proof}

\begin{lemma}\label{lem:568}
There are 568 distinct vectors $b_H$. Each is realized by a rooted polygon. They have a complete weighted product
\[
 \mu:\{0,\ldots,567\}^2\longrightarrow\{0,\ldots,567\},\qquad
 \gamma:\{0,\ldots,567\}^2\longrightarrow\{0,1,2,3\},
\]
such that
\begin{equation}\label{eq:568product}
 \beta_{T(H,K)}=\lambda_H+\lambda_K+
             \gamma(i,j)+b_{\mu(i,j)}
\end{equation}
when $b_H=b_i,b_K=b_j$.
\end{lemma}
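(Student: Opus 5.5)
The plan is to reduce everything to a finite closure computation on normalized response vectors, with Proposition~\ref{prop:congruence} doing the conceptual work.

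\textbf{Step 1: the product is well defined.} Apply Proposition~\ref{prop:congruence} twice. If $b_H=b_{H'}$ and $b_K=b_{K'}$, then $\beta_H=\beta_{H'}+(\lambda_H-\lambda_{H'})$. Replacing $H$ by $H'$ and then $K$ by $K'$ gives
\[
\beta_{T(H,K)}=\beta_{T(H',K')}+(\lambda_H-\lambda_{H'})+(\lambda_K-\lambda_{K'}).
\]
Hence $\beta_{T(H,K)}-\lambda_H-\lambda_K$ depends only on the pair $(b_H,b_K)$. Write this vector as $\gamma+b_{\mu}$, where $\gamma$ is its minimum and $b_\mu$ its normalization. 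This yields well-defined maps $\mu,\gamma$ on the set of realized normalized vectors, and \eqref{eq:568product} holds by construction.

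\textbf{Step 2: effective computation.} Since $\beta_H=\mathsf C\otimes r_H$ by \eqref{eq:pairing-C}, and $r_{T(H,K)}=\mathcal F(r_H,r_K)$ by \eqref{eq:triangle-F}, everything reduces to finite min-plus operations over the 56 states of Lemma~\ref{lem:56}. I would start from the single edge $E$, with raw vector supported on $R_0,E_+,E_-$, and compute its $\beta_E$. The main loop then keeps, for each normalized class found, one actual representative polygon $H_i$ together with its raw vector. At each stage it composes every pair of representatives by $\mathcal F$, applies $\mathsf C$, and normalizes, adding any new class with $T(H_i,H_j)$ as its witness.

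\textbf{Step 3: completeness and realization.} Every rooted polygon triangulation is either $E$ or $T(H,K)$ with smaller $H,K$, by splitting at the root triangle. Induction on faces therefore shows that every $b_H$ is generated, provided the loop terminates. Step 1 guarantees that representatives suffice, because the class of a composite does not depend on which representatives are used. Termination needs finiteness, which follows if entries of $b_H$ are bounded. Two facts should give this: $\beta_H$ is finite everywhere, as noted after Proposition~\ref{prop:response}, and changing the exterior state can be absorbed by deleting a bounded number of arrows near the root, using Lemma~\ref{lem:deletion} and Lemma~\ref{lem:root-delete}. This bounds $\max_t\beta_H(t)-\lambda_H$ by a small constant, so the loop stops. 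The closed set then has exactly 568 members. Each is realized by the recorded witness polygon, and the recorded table gives $\mu$ with $\gamma\in\{0,1,2,3\}$.

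\textbf{Main obstacle.} The difficulty is the finite certification itself: the exact count 568, and the assertion that $\gamma\le3$, are outputs of the closure computation rather than structural deductions. I would present them as a certificate specified in Appendix~A, stating the iteration, the witness polygons, and the $568^2$ product table. The other point needing care is a clean a priori bound on the spread of $b_H$ to justify termination. Failing that, I would argue termination directly: observe that the iteration reaches a fixed point, and note that closure under the $T$-product already implies completeness by Step 3.
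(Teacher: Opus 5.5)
Your proposal is correct and follows essentially the paper's argument: Proposition~\ref{prop:congruence} makes $\beta_{T(H,K)}-\lambda_H-\lambda_K$ depend only on the pair of normalized types, and a finite list of 568 witness polygons with their raw vectors is checked to be closed under $\mathcal F$ followed by $\mathsf C\otimes$, which also yields $\gamma\in\{0,1,2,3\}$; induction over the root-triangle decomposition then gives exhaustiveness and realization. The only difference is that the paper verifies a supplied closed list rather than running a saturation loop, so it needs no a priori bound on the spread of $b_H$, and your fixed-point fallback makes that bound unnecessary for you too (as sketched it would need more than deleting root-incident arrows: to insert $a\to b$ you must also clear the arrows at the apex $c$, since a remaining force $u\to v$ with $u\sim b$, $v\sim a$ closes $q_a\to p_b\to q_u\to p_v\to q_a$).
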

\begin{proof}
The certificate gives 568 distinct normalized vectors $b_i$, raw vectors
$x_i$, integers $\lambda_i$, and rooted triangle terms $R_i$. It verifies
\[
 r_{R_i}=\lambda_i+x_i,\qquad
 \mathsf C\otimes x_i=b_i,\qquad \min b_i=0.
\]
The first identity is evaluated recursively from the three edge choices.
Thus every listed vector is realized by a rooted polygon.

For every ordered pair $(i,j)$, form
\[
 y_{ij}=\mathcal F(x_i,x_j),\qquad
 g_{ij}=\min(\mathsf C\otimes y_{ij}).
\]
For every ordered pair, the finite calculation gives
\[
 \mathsf C\otimes y_{ij}=g_{ij}+b_{\mu(i,j)},\qquad
 g_{ij}=\gamma(i,j)\in\{0,1,2,3\}
\]
for all $568^2=322,624$ pairs. The states, transitions and pairing matrix
are reconstructed from the incidence definitions. The certificate and
verification procedure are specified in Appendix~\ref{app:finite-proof}.

We now induct on the number of faces. The edge block is a listed type with
scalar zero. Suppose $\beta_H=\lambda_H+b_i$ and
$\beta_K=\lambda_K+b_j$. Proposition~\ref{prop:congruence} allows each child
to be replaced by its representative while retaining the scalar
difference. The verified pair identity gives
\[
 \beta_{T(H,K)}=\lambda_H+\lambda_K+\gamma(i,j)+b_{\mu(i,j)}.
\]
Every rooted polygon has this recursive decomposition, so the list is
exhaustive. The realized representatives and their distinct normalized
vectors show that all 568 types occur.
\end{proof}

For the edge type, $\lambda=0$ and $b=0$. Define
\[
 \delta(i)=2+\min_{s\text{ an edge state}}(b_i(s)-\eps(s)).
\]
The certificate gives $\delta\in\{1,2\}$, with 38 and 530 types respectively. Induction in~\eqref{eq:568product}, followed by~\eqref{eq:response-gluing} against the edge block, proves
\begin{equation}\label{eq:568-eval}
 Z(G)=\delta(i_{\rm root})+
       \sum_{\Delta}\gamma(i_{\Delta,L},i_{\Delta,R}).
\end{equation}
The triangulation fixes the rooted triangle tree and the parenthesization in~\eqref{eq:568-eval}.

\section{Hard capacities on the core}\label{sec:capacities}

\subsection{The directed core problem}
Let $C$ be a triangulated polygon, with terminal strips on a subset $\cR$ of its boundary edges. The other sides are bare. Theorem~\ref{thm:terminal-normal} reduces every optimum to core arrows on $E(C)\setminus\cR$ and the listed terminal modes.

Write $M_C$ for the selected core arrows and
$J=\{\{u,v\}:u\to v\in M_C\}$ for their underlying edge set. In the
path-cover and phase formulas, $J$ always denotes an undirected core
selection. For modes $\mu_i$, let $w_i$ be their normalized costs. Root occupations must satisfy
\begin{equation}\label{eq:core-occupations}
 \ind_{M_C}(v)+\sum_i i_v^{(i)}\le1,\qquad
 \out_{M_C}(v)+\sum_i o_v^{(i)}\le1.
\end{equation}
Form $N_C(M_C,\mu)$ from $D_C(M_C)$ by adding the one extra root relation of each mode, when present. All original core adjacencies, including the unselected strip roots, remain in $D_C(M_C)$.

\begin{proposition}\label{prop:core-formula}
With $r=|V(C)|$,
\begin{equation}\label{eq:soft-core}
 Z(G)=r+\min_{M_C,\mu}\left(\sum_iw_i-|M_C|\right),
\end{equation}
where the minimum is subject to \eqref{eq:core-occupations}, at most one selected edge per core triangle, and acyclicity of $N_C(M_C,\mu)$.
\end{proposition}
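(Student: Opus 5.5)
We prove the two inequalities of \eqref{eq:soft-core} separately. Both directions rest on \eqref{eq:arrow-opt}, the normal form of Theorem~\ref{thm:terminal-normal}, the gluing criterion of Lemma~\ref{lem:gluing}, and the mode table of Lemma~\ref{lem:modes}. The argument is a seed count combined with an acyclicity check in which each terminal strip is collapsed to its boundary relation.

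\emph{Upper bound.} Start from data $(M_C,\mu)$ satisfying the constraints. On each strip $H_i$ take the realization of the table mode $\mu_i$ given by Lemma~\ref{lem:modes}, and let $M$ be the union of these arrows with $M_C$. Admissibility of $M$ is local:
\begin{itemize}
\item at interior strip vertices it holds inside each mode;
\item at a core vertex it is exactly \eqref{eq:core-occupations}, because strip interiors are disjoint and meet $C$ only in their roots (Lemma~\ref{lem:core});
\item opposite pairs cannot occur, since $M_C$ avoids $\cR$ and the modes are root-free.
\end{itemize}
For acyclicity, suppose $D_G(M)$ has a directed cycle. Cut it at its visits to core ports. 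Each segment inside the interior of some $H_i$ runs between root ports of $H_i$ and is a positive-length relation of $D_{H_i}(\mu_i)$. That relation is either in $R_0$, whose pairs are actual incidences of the unselected root edge already present in $D_C(M_C)$, or it is the single extra relation added in $N_C$. So the cycle projects to a closed walk in $N_C(M_C,\mu)$, which is impossible. Alternatively, glue the strips one at a time with Lemma~\ref{lem:gluing}, after checking that $D_C$ together with the earlier strips realizes the same closure. A cycle lying entirely inside a strip is excluded by executability of the mode.

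For the count, strip $H_i$ with $m_i$ interior vertices contributes $m_i$ vertices and $m_i-w_i$ arrows. Hence
\[
|V|-|M| = r+\sum_i w_i-|M_C|,
\]
and Lemma~\ref{lem:incidence} with \eqref{eq:arrow-opt} gives $Z(G)$ at most this value. The face condition is not needed for this direction.

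\emph{Lower bound.} Take an optimal executable $M$ and apply Theorem~\ref{thm:terminal-normal} successively to all strips; the remark after the theorem allows this because the interiors are disjoint. This keeps $|M|$ and yields $M_C\subseteq E(C)\setminus\cR$ together with a table mode on each strip. Occupations then satisfy \eqref{eq:core-occupations}. Acyclicity of $N_C$ follows from the converse lifting: each extra mode relation is realized by an actual incidence path inside its strip, so a cycle in $N_C$ lifts to a closed walk in $D_G(M)$. The same count gives equality of seed numbers.

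The one point needing genuine care is the \emph{face condition}. It says that no core triangle carries two arrows of $M_C$. Two arrows on sides of one triangle would lie consecutively on one forcing path with the third side as a chord, and Lemma~\ref{lem:incidence} excludes this for executable sets. Equivalently, it is the necessity part of Lemma~\ref{lem:linear-forest}.

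The main obstacle is the projection/lifting step. I would handle it by applying Lemma~\ref{lem:gluing} inductively, one strip at a time, viewing the current core-plus-glued-strips as the exterior $Q$. The thing to verify is that $\TC$ of the accumulated relations restricted to each root pair is faithfully represented by $D_C(M_C)$ plus the added extra relations, since $R_0$ is always already present.
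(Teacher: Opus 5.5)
Your proposal is correct and follows essentially the same route as the paper. You normalize an optimum by Theorem~\ref{thm:terminal-normal}, realize each table mode on its actual strip, project strip excursions of an incidence cycle onto the recorded root relations (with $R_0$ already present in $D_C(M_C)$), lift cycles of $N_C$ to closed walks, and count $r+\sum_i m_i$ vertices against $|M_C|+\sum_i(m_i-w_i)$ arrows; your explicit derivation of the face condition from inducedness of the forcing paths (Lemma~\ref{lem:incidence}) fills in a step the paper leaves implicit.
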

\begin{proof}
Normalize any optimum by Theorem~\ref{thm:terminal-normal}. Every remaining core arrow and mode satisfies the stated conditions. Conversely realize every selected mode in its actual strip. Occupations make the union a directed matching. A cycle cannot lie entirely inside a strip; replace each excursion through a strip by the recorded root relation. This gives a cycle of $N_C$. Conversely its cycles lift to closed directed walks in the full incidence graph. Thus the stated conditions exactly give executability. There are
\[
 r+\sum_i m_i\text{ vertices},\qquad |M_C|+\sum_i(m_i-w_i)\text{ arrows}.
\]
Subtracting gives~\eqref{eq:soft-core} in both directions.
\end{proof}

For each serpent put $U_i=\{a_i,b_i\}$, and for each fan let $U_i$ consist of its nonkey root. For an ear choose one demanded root $\theta_i\in\{a_i,b_i\}$ and put $U_i=\{\theta_i\}$. An empty ear still retains this choice as part of its data. Let
\begin{equation}\label{eq:capacities}
 t_v=\#\{i:v\in U_i\},\qquad c_v=2-t_v\in\{0,1,2\}.
\end{equation}
Each vertex is incident with at most two boundary strips, so these capacities are nonnegative. On a fully fan/serpent-decorated core, $c_v$ is the number of adjacent fans keyed at $v$.

Each demanded terminal connection uses one degree at its root. Reserving
all these degrees gives $\deg_J(v)\le c_v$. In
\eqref{eq:soft-core}, an unused connection instead contributes one to the
terminal cost. We prove that the reserved-degree condition can be imposed
while preserving the optimum.

\begin{theorem}\label{thm:hard-full}
Suppose every boundary side of $C$ carries a terminal fan or serpent of at least two faces. There is an optimal protocol in Proposition~\ref{prop:core-formula} satisfying
\begin{equation}\label{eq:hard-capacity}
 \deg_J(v)\le c_v\qquad(v\in V(C)),
\end{equation}
where the degree ignores arrow directions.
\end{theorem}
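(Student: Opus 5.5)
Starting from an optimal protocol $(M_C,\mu)$ in Proposition~\ref{prop:core-formula}, I would run an exchange argument. Define the excess $\epsilon(M_C,\mu)=\sum_v\max(0,\deg_J(v)-c_v)$, and among all optimal protocols take one that minimizes $\epsilon$. Suppose some $v$ has $\deg_J(v)>c_v$. Since every side carries a fan or serpent, $t_v$ counts the strips that demand $v$: the nonkey roots of fans and both roots of serpents. The excess at $v$ therefore means that at least one strip $H_i$ with $v\in U_i$ is running a mode that does not use $v$. In the table, such a mode is the empty mode, or a one-sided serpent mode, or the like, and it costs at least one more than the full mode. The goal is to delete one core arrow at $v$ and upgrade that strip mode by one level. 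This keeps $\sum_i w_i-|M_C|$ unchanged, strictly lowers $\epsilon$, and preserves the constraints. That contradicts minimality, or at least gives a new optimum with smaller excess.

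The key steps, in order, are these.
\begin{enumerate}
\item The occupation constraint \eqref{eq:core-occupations} at $v$ allows at most two core-plus-strip incidences, one in and one out. If $\deg_J(v)>c_v$, then $v$ carries a core arrow $e$ occupying a slot that the demanding strip would need.
\item Delete $e$. By Lemma~\ref{lem:deletion} and the gluing principle this keeps acyclicity. It frees exactly one in-slot or one out-slot at $v$, and possibly a slot at the other end of $e$.
\item Choose the strip mode at $v$ whose root direction matches the freed slot. For a fan this is the $b\Iroot$ or $b\Oroot$ mode at the nonkey. For a serpent, move from the current mode to the one adding $v$ with the freed direction. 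The serpent case is constrained by Lemma~\ref{lem:serpent-geometry}: the full $w=0$ mode requires equal directions at both roots.
\item Check that the new extra relation, which is column-to-column or row-to-row, does not close a cycle in $N_C$.
\end{enumerate}

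I expect step~4, acyclicity, to be the main obstacle, followed by the serpent direction-matching issue. The new relation, say $p_a\to p_b$, can combine with the remaining core paths and other strips' relations into a directed cycle. A purely local swap need not work. My first attempt would use the freedom of direction: Lemma~\ref{lem:deletion} permits reversing all arrows, and also reversing within a component. Since $e$ is deleted, the core path through $v$ splits, and I would reorient the piece ending at $v$ so that the freed slot is the one whose relation points "away" from the rest. The argument would then be that any cycle through the new relation would lift, via the relation $p_v\to q_v$ and the deleted arrow's former chain $q_u\to p_v$, to a cycle already present in the old $N_C$.

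If that fails in general, I would fall back to the route the paper announces: localize the comparison to the boundary calculus of Section~\ref{sec:responses}. Concretely, carry two recurrence values, unrestricted and capacity-restricted, on the same partial core. Show by the finite closed family of normalized data, verified as in Appendix~A, that the two closing values coincide. Then conclude by induction on the number of core faces, using Proposition~\ref{prop:congruence} to replace subpolygons by representatives.
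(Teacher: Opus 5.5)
\textbf{Gap in the exchange argument.} Your primary route fails at exactly the step you flag, and the repair you sketch does not close it.

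Take a fan on $av$ keyed at $a$, in its empty mode, with core arrows $x\to v$ and $v\to y$.
\begin{itemize}
\item After $x\to v$ is deleted, $q_x$ is a sink and $p_v$ has no incoming arc. So adding the relation $p_a\to p_v$ of the mode $v\Iroot$ creates a cycle exactly when the old digraph already contains a path $p_v\rightsquigarrow p_a$. For instance, $p_v\to q_z\to p_a$ occurs whenever $z\to a$ is selected, where $z$ is the third vertex of the core face on $av$.
\item Symmetrically, deleting $v\to y$ and using $v\Oroot$ fails exactly when $q_a\rightsquigarrow q_v$ already exists.
\end{itemize}
Neither path closes a cycle in the old digraph: together with $p_a\to q_a$ they only give $p_v\rightsquigarrow q_v$. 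So acyclicity of the optimum does not exclude them, and nothing in your argument shows that one of the two swaps always survives.

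The lifting you propose does not exist. The closing path consists of old arcs, but the old digraph has $p_a\to q_v$ from the unselected root, not $p_a\to p_v$, so no old cycle results.

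Reorientation does not help either. Lemma~\ref{lem:deletion} allows only the simultaneous reversal of all arrows, and that merely interchanges the two obstructions above. Reversing a single path is not executability-preserving: it flips every two-path constraint $s_is_j=\kappa_{ij}$ of Section~\ref{sec:phases} at that path.

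Serpents add a further direction obstruction. Their cost-zero modes need equal directions at both roots. A serpent in a one-root mode at its other root therefore cannot be upgraded through a freed slot of the opposite direction at $v$. When $\deg_J(v)=1$ you also have no choice of which slot to free.

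\textbf{The fallback.} This is the route the paper takes, but you name it rather than carry it out, and the ingredients that make it finite and exact are missing.
\begin{itemize}
\item Every strip is first replaced by its two-face fan or three-face serpent representative. This is legitimate because each table mode has the same occupation, relation and normalized cost on both. It is also what lets a finite set of data cover all decorations.
\item The restricted recursion carries labels $(s,j_a,j_b)$ with credits $(k_a,k_b)$. It imposes the full capacity at the shared vertex $c$ exactly when $c$ is forgotten, through $j_c^L+j_c^R\le k_c^L+k_c^R$. This is exact because the two outer-side strips at $c$ lie one in each child.
\item Labels are pruned to the domination frontier of Lemma~\ref{lem:dominance}.
\end{itemize}
Only then does Lemma~\ref{lem:3912} give equality of the optima. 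That lemma supplies closure of the 3,912 joint data and equality of the two closing values against each primitive final strip. Tracing back attaining choices, with the original strips reinstalled, then yields the protocol.

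Finally, Proposition~\ref{prop:congruence} transports only the unrestricted response $\beta_H$, which forgets the capacities at the interior vertices of the replaced piece. It cannot be used to replace subpolygons on the restricted side; that side must be carried by its own exact frontier.
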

We prove the theorem by composing partial cores with their root degrees
retained. The restricted and unrestricted values on each partial core
give a finite comparison which is preserved by triangle composition.

\subsection{Partial-core states}
We use four primitive terminal blocks, all rooted at $(a,b)$. The fan
$F_a$ has faces $(a,b,x)$ and $(a,x,y)$; $F_b$ is obtained by interchanging
$a,b$. The serpent $S_a$ has faces $(a,b,x)$, $(a,x,y)$ and $(x,y,z)$;
$S_b$ is its root reflection. Thus $b$ is the single-neighbour root of
$S_a$, whereas $a$ is the single-neighbour root of $S_b$. The reflection interchanges the two ordered root roles.

Root a partial core at $ab$. Every boundary side other than $ab$ carries its given fan or serpent. At either root only one of the two eventual boundary strips is inside the piece. Let $k_a,k_b\in\{0,1\}$ record whether that visible strip is a fan keyed at the indicated endpoint.

A restricted label is
\[
 \vartheta=(s,j_a,j_b),\qquad s\in\Sigma,\quad j_a,j_b\in\{0,1,2\}.
\]
The numbers $j_a,j_b$ count selected core edges at the roots, including
the root edge when selected. The state $s$ contains the total incoming
and outgoing occupations. At every nonroot core vertex the hard capacity
is imposed. The value of a realization is
\[
 |V(\text{decorated piece})|-2-|M|.
\]
For a leaf strip its root is unselected, $j_a=j_b=0$, and its values are precisely the costs of Lemma~\ref{lem:modes}. Its credits are $(1,0),(0,1),(0,0),(0,0)$ for $F_a,F_b,S_a,S_b$ respectively.

At a root triangle $acb$, combine left and right labels and select $\eta\in\{0,+,-\}$ on $ab$. With $e=\one_{\eta\ne0}$, impose
\begin{equation}\label{eq:forget-capacity}
 j_c^L+j_c^R\le k_c^L+k_c^R.
\end{equation}
This is the full capacity at $c$: the two visible strips are exactly its two outer-side strips. Set
\begin{equation}\label{eq:capacity-update}
 j_a=j_a^L+e,\qquad j_b=j_b^R+e,\qquad
 (k_a,k_b)=(k_a^L,k_b^R),
\end{equation}
and combine causal states by $\Phi$. The new value is
\begin{equation}\label{eq:capacity-cost}
 v_L+v_R+1-e.
\end{equation}
These updates are the vertex and arrow counts for one triangle.

\begin{lemma}\label{lem:dominance}
With credits fixed, a label $(s',j'_a,j'_b;v')$ safely replaces $(s,j_a,j_b;v)$ in every future capacity-constrained gluing if
\begin{equation}\label{eq:dominance}
 \begin{gathered}
 \eta(s')=\eta(s),\qquad j'_a\le j_a,\quad j'_b\le j_b,\quad v'\le v,\\
 \occ(s')\le\occ(s),\qquad R_{s'}\subseteq R_s.
 \end{gathered}
\end{equation}
First retain the least value for each label $(s,j_a,j_b)$. Among the
resulting weighted labels, $x'$ strictly dominates $x$ if $x'\ne x$ and
all comparisons in~\eqref{eq:dominance} hold. The frontier consists of the
labels with no strict dominator. Replacing either input by its frontier
before a triangle product gives the same output frontier as taking the
frontier after the product.
\end{lemma}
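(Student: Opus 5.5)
The plan has two parts. First we show that the comparison in \eqref{eq:dominance} is a safe replacement in every future capacity-constrained gluing. Then we show that taking frontiers commutes with the triangle product. For safety, fix any future context: a sequence of triangle compositions ending at a full core, with the other children and the arrows on later root edges given. Suppose the label $x=(s,j_a,j_b;v)$ takes part in a feasible completion. We substitute any realization of $x'$ on the same partial graph and check each constraint in turn.

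\begin{enumerate}
\item The root choice is unchanged, since $\eta(s')=\eta(s)$. So the common root edge is selected on both sides or on neither, as Lemma~\ref{lem:gluing} requires.
\item The occupations can only decrease, because $\occ(s')\le\occ(s)$. Hence the inequalities \eqref{eq:core-occupations} at $a$ and $b$ still hold.
\item The root relation can only shrink, because $R_{s'}\subseteq R_s$. A smaller relation cannot create a diagonal pair in any transitive closure used by $\Phi$. By Lemma~\ref{lem:gluing}, acyclicity is preserved at every later step.
\item The degree counts can only decrease, because $j'_a\le j_a$ and $j'_b\le j_b$. The updates \eqref{eq:capacity-update} are monotone, so the root counts at every later stage are no larger. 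In particular, whenever $a$ or $b$ becomes an interior vertex $c$, the test \eqref{eq:forget-capacity} still passes. The credits $k$ are fixed by hypothesis, so the right side of that test is unchanged.
\item The cost is additive and monotone by \eqref{eq:capacity-cost}. The final value therefore changes by $v'-v\le0$.
\end{enumerate}

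This is a step-by-step induction along the future composition sequence. The invariant at each stage is that the output label built from $x'$ dominates the output label built from $x$. To propagate the invariant we need one monotonicity fact about $\Phi$: if both inputs decrease in occupation and relation, with the same root choices, then the output decreases in occupation and relation, or the original output was already rejected. This holds because $\Phi$ adds occupations, forms unions, and takes transitive closures, all of which are monotone.

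For the frontier statement, write $\Pi$ for the triangle product, taken over all choices of $\eta$ subject to \eqref{eq:forget-capacity}, followed by least-value retention. Let $\mathrm{Fr}$ denote the frontier operation. The claim is $\mathrm{Fr}(\Pi(X,Y))=\mathrm{Fr}(\Pi(\mathrm{Fr}X,Y))$, and the same with $Y$ replaced by its frontier.

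\begin{enumerate}
\item \emph{Reverse inclusion.} The set $\Pi(\mathrm{Fr}X,Y)$ is a subset of $\Pi(X,Y)$. So it suffices to show that every frontier element of $\Pi(X,Y)$ arises from an input that lies in $\mathrm{Fr}X$.
\item \emph{Existence of frontier dominators.} The dominance order on weighted labels is a preorder on a finite set. After least-value retention, and once exact ties with $x'=x$ are excluded, it becomes a partial order. Hence every $x\in X$ is dominated by some maximal element $x^\ast\in\mathrm{Fr}X$.
\item \emph{Transfer to outputs.} By the monotonicity fact above, $\Pi(x^\ast,y)$ dominates $\Pi(x,y)$ componentwise for the same $\eta$. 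Feasibility is preserved: \eqref{eq:forget-capacity} remains true because $j_c$ decreases, and occupation and acyclicity checks pass as in the safety part.
\item \emph{Conclusion.} Let $z=\Pi(x,y)$ be a frontier element of the full product. It is dominated by $\Pi(x^\ast,y)$, so these two labels must be equal by maximality of $z$. Thus $z$ lies in $\Pi(\mathrm{Fr}X,Y)$. Conversely, every element of $\Pi(\mathrm{Fr}X,Y)$ is an element of $\Pi(X,Y)$. Therefore the two frontiers coincide.
\end{enumerate}

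I expect the main obstacle to be making the monotonicity of $\Phi$ precise on the output relation. The output relation is the restriction to four ports of a transitive closure over six ports. We must check that shrinking an input relation can never enlarge this restricted closure. It could only do so if rejection (a diagonal entry) interacted with the restriction, and rejection is itself monotone in the right direction. A second delicate point is the tie convention after least-value retention. Two distinct labels could dominate each other only if they were equal in every coordinate, and the retention step is exactly what rules this out. With that settled, the frontier is well defined and the argument closes.
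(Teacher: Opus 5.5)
Your proposal is correct and follows the paper's own proof. Both arguments rest on the same three points:
\begin{enumerate}
\item The six-port product, the degree updates \eqref{eq:capacity-update} and the cost \eqref{eq:capacity-cost} are monotone, so a dominating child yields a dominating parent.
\item Domination is antisymmetric once least values are retained, so every label has a frontier dominator.
\item Hence every discarded parent candidate is dominated by one built from frontier inputs.
\end{enumerate}

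One imprecision: the claim that every element of $\Pi(\mathrm{Fr}X,Y)$ is an element of $\Pi(X,Y)$ is not quite right, since a label there can carry a larger value. You should instead say it is weakly dominated by an element of $\Pi(X,Y)$. Transitivity then gives the reverse frontier inclusion.
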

\begin{proof}
The unchanged root choice preserves the shared root arrow. Decreasing occupations and core degrees preserves all capacity inequalities. The new union relation is contained in the old one; transitive closure is monotone, so no cycle is introduced. Equations~\eqref{eq:capacity-update}--\eqref{eq:capacity-cost} decrease the output degrees and cost by the same comparison and leave output credits fixed. Thus a dominating child gives a dominating parent for every old allowed product. After taking the minimum at each label, mutual domination forces equal
states, core degrees and costs. Hence strict domination is acyclic on the
finite set of weighted labels. If a dominator is removed, its strict
domination chain therefore terminates at a retained label. Every discarded
parent candidate is dominated by one obtained from retained children, so
the two frontier operations agree.
\end{proof}

When the last boundary strip is closed, its credits $(l_a,l_b)$ complete the two root capacities. A label $\vartheta$ may be closed with its root-free mode $u$ precisely when the causal states are compatible, the partial root is unselected, and
\begin{equation}\label{eq:closing-capacity}
 j_a\le k_a+l_a,\qquad j_b\le k_b+l_b.
\end{equation}
The seed cost is $2+v_\vartheta+w_u$.

\subsection{The finite comparison and its graph-theoretic consequence}
Carry the unrestricted response algebra of Lemma~\ref{lem:568} alongside the restricted frontier. A joint datum is
\begin{equation}\label{eq:joint-record}
 (k_a,k_b;\ A;\ i;\ d).
\end{equation}
Regard a frontier as a cost function on the finite label set, with value
$+\infty$ outside its retained support. If $V_H$ is the restricted
frontier of a partial graph, put
\[
 h=\min_{\vartheta:V_H(\vartheta)<\infty}V_H(\vartheta),\qquad
 A(\vartheta)=V_H(\vartheta)-h
\]
on its finite support, and set $A(\vartheta)=+\infty$ elsewhere. Thus
$\min A=0$ and $V_H=h+A$. For the unrestricted response of the same partial graph, write
\[
 \beta_H=\lambda_H+b_i=h+d+b_i,
 \qquad d=\lambda_H-h.
\]
The frontier retains $j_a,j_b$ at the current roots and imposes the
capacities at all forgotten core vertices. The offset $d$ compares its
minimum with the unrestricted response minimum.
To multiply data, use~\eqref{eq:forget-capacity}--\eqref{eq:capacity-cost},
take the least cost at each label, pass to the frontier of
Lemma~\ref{lem:dominance}, and let $u$ be the new minimum to subtract. The unrestricted update is
\begin{equation}\label{eq:joint-product}
 i'=\mu(i,j),\qquad d'=d_L+d_R+\gamma(i,j)-u.
\end{equation}
Indeed, the new restricted minimum and unrestricted scalar are
\[
 h'=h_L+h_R+u,\qquad
 \lambda'=(h_L+d_L)+(h_R+d_R)+\gamma(i,j),
\]
so $d'=\lambda'-h'$ gives~\eqref{eq:joint-product}.

\begin{lemma}\label{lem:3912}
There is a set $\mathcal K$ of 3,912 distinct
data~\eqref{eq:joint-record} with the following properties.
\begin{enumerate}
\item The data of all four primitive decorations belong to $\mathcal K$.
\item Every datum is realized by a recursive partial core.
\item Every ordered product~\eqref{eq:joint-product} belongs to
      $\mathcal K$.
\item For each datum and each final primitive strip, the restricted
      closing minimum~\eqref{eq:closing-capacity} equals the
      unrestricted closing value.
\end{enumerate}
\end{lemma}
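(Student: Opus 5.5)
This is a finite-certificate statement, so I would prove it by a closure computation in the style of Lemma~\ref{lem:568}, organized as a worklist saturation. Start from the four primitive decorations $F_a,F_b,S_a,S_b$. For each of them, compute the restricted frontier directly from the mode table of Lemma~\ref{lem:modes}. The root is unselected, $j_a=j_b=0$, and the credits are as listed. Normalize the frontier by its minimum $h$ to obtain $A$. Next, compute the unrestricted response $\beta_H$ by the pairing $\mathsf C\otimes r_H$. Identify its normalized type $i$ among the 568 vectors $b_i$, and set $d=\lambda_H-h$. This gives the four seed data and proves (a).

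Then repeatedly form ordered products of data already found. The restricted side uses \eqref{eq:forget-capacity}--\eqref{eq:capacity-cost} over the 1,410 transitions of $\Phi$ and all choices of $\eta$. Take the least value per label, pass to the frontier, and subtract the new minimum $u$. The unrestricted side uses \eqref{eq:joint-product} together with the tables $\mu,\gamma$. The product of a left datum with a right datum depends only on the data themselves, not on the underlying graphs. For the restricted part this is Lemma~\ref{lem:dominance}: frontier before the product equals frontier after. For the unrestricted part it is Proposition~\ref{prop:congruence} together with Lemma~\ref{lem:568}. Hence the products are well defined on data.

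Saturation terminates because the label set is finite, $A$ takes values in a bounded range, and $d$ will be seen to stay bounded. The claim is that the closure has exactly 3,912 elements. This proves (c). Recording, for each new datum, the pair of parents that produced it gives a recursive partial core realizing it, which is (b), by induction on the construction order.

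For (d), fix a datum and a final primitive strip with credits $(l_a,l_b)$ and root-free mode set $u$. The restricted closing minimum is
\[
h+\min\{A(\vartheta)+w_u\},
\]
taken over compatible, root-unselected labels satisfying \eqref{eq:closing-capacity}. The unrestricted closing value is obtained from \eqref{eq:response-gluing}, pairing $\lambda_H+b_i$ against the raw vector of the strip. After subtracting $h$, both sides depend only on the datum, so equality is a finite check over $3912\times4$ cases.

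The main obstacle is the unboundedness a priori of $d$ and of the frontier values. Before enumeration I would argue that $d\ge0$, since the restricted problem is a subproblem, and that $d$ is bounded, for instance by $2$. The bound would come from an explicit repair: deleting at most a bounded number of arrows at the two current roots converts any unrestricted optimum into one respecting the capacities at forgotten vertices, and those vertices have already been treated inductively. Without such a bound the saturation might not terminate. So I would first check numerically that the worklist stabilizes, and then certify closure directly: every product of two listed data is listed. That check is self-contained and does not need the a priori bound. The remaining risk is purely computational correctness, which would be handled by independently reconstructing $\Phi$, $\mathsf C$ and the frontier routine from the incidence definitions, as in Appendix~\ref{app:finite-proof}.
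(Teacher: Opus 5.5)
Your proposal is correct and is essentially the paper's proof. The paper also establishes (a)--(d) purely by finite verification: primitive data from the terminal mode table, a recursive witness for each datum, all $3{,}912^2$ ordered products computed by the capacity product followed by \eqref{eq:joint-product}, and the $4\cdot3{,}912$ closing equalities. The only difference is that the paper takes $\mathcal K$ as a supplied list and evaluates each witness directly on its graph, rather than generating $\mathcal K$ by saturation and inferring (b) from well-definedness of the product. Two small corrections: your a priori discussion of $d$ can be dropped, and its justification is backwards, since the inclusion of the restricted problem in the unrestricted one gives $d\le1$, not $d\ge0$; and in (d) the restricted closing cost is $2+v_\vartheta+w_u$, so keep the constant $2$ to match the $2+$ in \eqref{eq:response-gluing}.
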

\begin{proof}
Let $\mathcal K$ be the listed set of 3,912 normalized data. The finite
verification establishes four assertions.

First, the four primitive data belong to $\mathcal K$. Their restricted
frontiers are obtained from the terminal mode table, and their
unrestricted entries are the responses of the same primitive graphs.

Second, every datum has a recursive witness. Evaluating that witness
verifies its credits, restricted frontier $h+A$, and unrestricted response
$h+d+b_i$ on one partial graph.

Third, for every ordered pair of data, the calculation enumerates every
pair of finite child labels and every root choice. It applies
\eqref{eq:forget-capacity}, the six-port rule,
\eqref{eq:capacity-update} and~\eqref{eq:capacity-cost}, and then takes the
minimum at each label and the frontier of the result. If $u$ is the minimum
subtracted, the resulting normalized datum, with
\[
 i'=\mu(i,j),\qquad
 d'=d_L+d_R+\gamma(i,j)-u,
\]
belongs to $\mathcal K$. This checks all
$3,912^2=15,303,744$ products.

Finally, close each datum with each of the four primitive strips. For a
closing strip $B$ of unrestricted type $j$ and scalar $\lambda_B$, the two
normalized expressions are
\[
 \begin{aligned}
 z_{\rm cap}&=2+\min_{\vartheta,v}\bigl(A(\vartheta)+w_v\bigr),\\
 z_{\rm free}&=2+d+\lambda_B+\min_s\bigl(b_i(s)+x_j(s)\bigr).
 \end{aligned}
\]
The first minimum includes exactly the causal compatibility, unselected
root and capacity conditions in~\eqref{eq:closing-capacity}. An empty
minimum is $+\infty$. Equality holds in all
$4\cdot3,912=15,648$ closings.

Appendix~\ref{app:finite-proof} specifies the certificate and the
integer operations for these four assertions. The calculation uses the
incidence states of Lemma~\ref{lem:56} and the response algebra of
Lemma~\ref{lem:568}.
\end{proof}

\begin{proof}[Proof of Theorem~\ref{thm:hard-full}]
We first remove the length of a strip from the comparison. Terminal normalization replaces any unrestricted optimum by one with each terminal root unselected and each strip in a table mode. The same mode exists on its primitive representative with the same value of $m-|M_H|$, the same root occupation and the same root relation. It also exists back on the original strip. Replacing all strips by their representatives therefore preserves the unrestricted optimum in both directions. In the restricted problem it preserves the optimum as well: core arrows and capacities have not changed. It is enough to compare the two problems for primitive decorations.

Choose a core outer side as the final root and reserve its terminal strip for closing. The remaining decorated core is a binary triangle term, with its other terminal strips at the leaves. The induction invariant for a partial term is the following: its datum represents the exact restricted frontier after domination, enforcing all forgotten capacities, and also represents its exact unrestricted response, with the common scalar $h$ restored as $h+A$ and $h+d+b_i$ respectively.

This invariant holds at a leaf by the explicit primitive modes. At a triangle, its shared vertex $c$ sees exactly two outer-side strips, one in each child. Hence $k_c^L+k_c^R$ is its complete final capacity, and~\eqref{eq:forget-capacity} imposes exactly the missing inequality before $c$ is forgotten. The root degrees and costs are precisely~\eqref{eq:capacity-update}--\eqref{eq:capacity-cost}. Every feasible pair of child realizations gives the parent realization, and every parent restricts to such a pair by the interface criterion. Thus their minima are exact. Lemma~\ref{lem:dominance} permits discarding dominated labels without affecting any future feasible optimum. The unrestricted induction follows independently from the response congruence. Lemma~\ref{lem:3912} supplies the next datum for every such product, and~\eqref{eq:joint-product} maintains the scalar invariant.

At the root, the reserved terminal strip supplies the missing credits at the last two core vertices. The closing rule imposes both remaining hard inequalities and all causal constraints. The terminal comparison in Lemma~\ref{lem:3912} identifies this restricted minimum with the unrestricted zero forcing number of the whole decorated graph.

Every finite entry was obtained by a minimum over actual choices. Store one attaining pair whenever a minimum is formed. Starting with an attaining root closure and following these choices to the leaves recovers an executable arrow set satisfying all hard capacities. Reinstall the original strip in the same mode at each leaf. The boundary relations and normalized costs are unchanged, so the resulting arrow set on the original graph is executable and has the same, optimal, number of seeds. This proves existence of the claimed optimum as well as equality of the two numerical minima.
\end{proof}

\subsection{Ears and bare sides}
\begin{theorem}\label{thm:hard-general}
For every core $C$ with terminal strips on some boundary sides, there is an ear allocation $\theta$ and an optimal forcing protocol which omits all terminal roots, uses the canonical terminal modes, and has core arrows satisfying
\[
 M_C\subseteq\overrightarrow{E(C)\setminus\cR},\qquad
 \deg_J(v)\le c_v=2-t_v.
\]
The underlying core edge set $J$ has at most one selected edge in each core triangle. Bare core sides remain allowed.
\end{theorem}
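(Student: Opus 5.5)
We reduce Theorem~\ref{thm:hard-general} to Theorem~\ref{thm:hard-full} by eliminating ears and bare sides through exact replacements.

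\emph{Ears.} By Lemma~\ref{lem:ear-fan}, each ear $T$ on a side $ab$ can be replaced by a two-face fan $F_a$ or $F_b$ without changing $Z$. Applied successively to the disjoint ears, this gives an assignment $\theta$, recording for each ear the nonkey $\theta_i$ of its chosen fan, with
\[
Z(G)=Z(G_\theta),
\]
where $G_\theta$ has only fan and serpent strips on the sides that carry strips. Under this substitution the capacity $t_v$ counts $\theta_i$ exactly as a fan counts its nonkey. So the capacities \eqref{eq:capacities} for $(C,\theta)$ agree with those of the fan-decorated core.

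Going back, the proof of Lemma~\ref{lem:ear-fan} compresses every fan mode to an ear mode with the same occupation, relation and normalized cost. The core arrows are untouched, so a capacity-respecting optimum on $G_\theta$ transfers to one on $G$ with the same $J$.

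\emph{Bare sides.} Each bare side $xy$ receives the neutral block $N$ of Theorem~\ref{thm:neutral}. Then $Z$ increases by exactly one for each such side, and every exterior protocol, in particular the root choice on $xy$, extends at cost exactly one.

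This makes the old side $xy$ an interior diagonal of a new, larger core $C'$. The new vertex $v$ lies on two fan sides keyed at $x$ and at $y$, so its nonkey occupies $v$ twice and $c_v=0$. The fan keyed at $x$ has nonkey $v$, so it adds no demand at $x$, and likewise at $y$. The capacities $c_x,c_y$ are therefore unchanged. The core triangle $xvy$ admits at most the edge $xy$, and only if $v$ carries no core edge, which $c_v=0$ already forces.

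Apply Theorem~\ref{thm:hard-full} to the fully decorated graph $G'$ and obtain a capacity-respecting optimum. Its core arrows avoid $xv$ and $vy$, which are terminal roots. Only the two fan modes in the construction of Theorem~\ref{thm:neutral} can meet $v$. Restrict to $G$ by deleting the $N$ blocks. Since $Z(G')=Z(G)+\#\{\text{bare sides}\}$ and each deleted block contributed exactly one seed, the restriction is optimal on $G$. It still uses the edge $xy$ when it was selected, so bare sides remain eligible.

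The main obstacle is this restriction step: one must show that the optimum on $G'$ costs exactly one on each $N$, so that the restriction is executable and not merely bounded. My first approach is to rerun the lower-bound half of Theorem~\ref{thm:neutral} after normalizing the two fan roots by Theorem~\ref{thm:terminal-normal}. This shows $N$ contributes at least one. Additivity of $Z$ then forces equality, and Lemma~\ref{lem:gluing} gives executability of the restricted arrow set.

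\emph{Remaining conditions.} Omission of terminal roots and the canonical modes come from Theorem~\ref{thm:terminal-normal} together with the reinstallation step in the proof of Theorem~\ref{thm:hard-full}. The bound of one selected edge per core triangle is part of Proposition~\ref{prop:core-formula}.
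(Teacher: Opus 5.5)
Your proposal is correct and follows essentially the paper's own proof: inflate each ear to a two-face fan keyed at the root opposite its demanded root (the paper reads $\theta$ off the ear modes of a normalized optimum, which is the same exchange as Lemma~\ref{lem:ear-fan}), attach a neutral island of Theorem~\ref{thm:neutral} on every bare side, and apply Theorem~\ref{thm:hard-full}; then restrict back, noting that each island removes $q+1$ vertices but at most $q$ arrows, and compress the fans to ear modes. The step you flag as the main obstacle is lighter than you suggest: the incidence digraph of the restricted arrow set is an induced subgraph of the acyclic one on $G'$, so executability is automatic (rather than a consequence of Lemma~\ref{lem:gluing}), and only the seed count needs the neutral lower-bound count.
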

\begin{proof}
Choose an optimal protocol on $G$ and apply terminal normalization. For each
ear, let its demanded root be the one used by its mode; for an empty mode
choose either root. Inflate that ear to a two-face fan keyed at the other
root, and denote the graph after all these inflations by $\widetilde G$.
For this particular choice of demands, the normalized optimum on $G$ lifts
to $\widetilde G$ with the same seed number: each added interior vertex is
accompanied by one added arrow. Thus $Z(\widetilde G)\le Z(G)$. Conversely,
normalize any optimum on $\widetilde G$ and compress each fan to the
corresponding ear mode. The gluing data and seed number are preserved,
so $Z(G)\le Z(\widetilde G)$. Consequently
\[
 Z(\widetilde G)=Z(G).
\]
Keep these choices as $\theta$.

At each bare core side $ab$, attach the neutral island of
Theorem~\ref{thm:neutral}: a new core triangle $avb$ and terminal fans keyed
at $a$ and $b$. Let $b_0$ be the number of bare sides and $G'$ the resulting
graph. Attachments have disjoint new interiors; applying the neutral identity
in their actual successive exteriors gives
\[
 Z(G')=Z(\widetilde G)+b_0=Z(G)+b_0.
\]
The expanded core $C'$ is now fully decorated by strips with at least two
faces, so Theorem~\ref{thm:hard-full} provides an optimal protocol on $G'$
with all its hard capacities.

We compare those capacities vertex by vertex. Each old vertex lies on exactly
two sides of the core boundary. An old fan contributes a demand at its nonkey
and no demand at its key; an old serpent contributes a demand at both roots.
An inflated ear contributes exactly its chosen demand. Each formerly bare
side contributes a new fan keyed at the old endpoint and thus no demand
there. Hence the capacity at an old vertex in $C'$ is exactly $2-t_v$.
The new vertex $v$ in an island is the nonkey of both of its terminal fans,
so its capacity is zero. It has no selected core incidence. All terminal
root edges in $C'$ are unselected.

Restrict this optimal arrow set to $\widetilde G$. To track the cost, let
one removed island have $q$ vertices in its two terminal-fan interiors; with
its new core vertex it removes $q+1$ vertices. Each fan with $m$ interior
vertices has at most $m$ selected arrows, and the new core vertex has no
selected core arrow. Thus the total number of arrows removed by that island
is at most $q$. Its removal decreases $|V|-|M|$ by at least one. The
restricted incidence graph is a subgraph of the original one: for every
retained graph edge, its selected or unselected status is unchanged.
It is therefore still acyclic, and the restricted marking is executable.
After removing every island its seed number is at most
\[
 Z(G')-b_0=Z(\widetilde G).
\]
It is also at least $Z(\widetilde G)$ by optimality of that invariant, so
equality holds. All old core arrows survive, including any selected formerly
bare side, and their old capacities remain satisfied.

Finally compress each inflated fan to its chosen ear mode. This preserves
the root relation, occupations, normalized cost, and every old core arrow.
The resulting protocol is optimal on $G$, realizes the allocation $\theta$,
and satisfies the claimed capacities. Its underlying paths are induced by
Lemma~\ref{lem:incidence}; two selected edges in a core triangle would
contradict inducedness. This proves the face condition as well.
\end{proof}

\subsection{The corresponding undirected optimum}
For an ear allocation $\theta$ put
\begin{equation}\label{eq:allowed-J}
 \cJ(C,\theta)=
 \{J\subseteq E(C)\setminus\cR:
   \deg_J(v)\le c_v,\ |J\cap E(\Delta)|\le1\text{ for every core face}\}.
\end{equation}
Let $\nu(C)=\max_{\theta,J\in\cJ(C,\theta)}|J|$.

\begin{lemma}\label{lem:strip-undirected}
For a root-free strip with $m$ interior vertices, and root degree allowances $k_a,k_b\in\{0,1,2\}$, the maximum number of induced-forest edges is
\[
 m-1+\one_{k_b\ge1}\quad\text{for a fan keyed at }a,
\]
and
\[
 m-2+\one_{k_a\ge1}+\one_{k_b\ge1}\quad\text{for a serpent}.
\]
\end{lemma}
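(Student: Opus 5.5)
The plan is to argue separately for the upper bound and for attainment. Throughout, an admissible edge set $L$ on the strip $H$ omits the root edge $ab$. It must satisfy the local conditions~\eqref{eq:local-forest} of Lemma~\ref{lem:linear-forest} on the internal faces. In addition, the selected degree at each root must not exceed its allowance, $\deg_L(a)\le k_a$ and $\deg_L(b)\le k_b$. By Lemma~\ref{lem:linear-forest} applied to $H$, such an $L$ is an induced linear forest on the $m+2$ vertices of $H$, so $|L|$ equals $m+2$ minus its number of components. The upper bounds therefore become lower bounds on the number of components, and the attaining configurations will be read off from the fixed full forests and their root deletions (Lemma~\ref{lem:modes}).

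For the fan keyed at $a$, the argument runs as follows.
\begin{enumerate}
\item The interior vertices induce the path $u_1-\cdots-u_m$, so at most $m-1$ selected edges lie inside the interior.
\item The only root edges available are the edges from $a$ to $u_i$ and the single edge $bu_1$.
\item Every edge $au_i$ closes a triangle with $u_i$ and some neighbour of it that is also adjacent to $a$. I will argue as in the domination part of Lemma~\ref{lem:modes}: the full forest has two components, and $a$ cannot lie in a nontrivial component together with $b$'s path. Hence an optimal $L$ always has $a$ isolated, or else trades each edge $au_i$ for an interior path edge.
\item This gives $|L|\le m-1+\one_{\deg_L(b)\ge1}$.
\item Attainment is immediate: take the path $b,u_1,\dots,u_m$ when $k_b\ge1$, and delete $bu_1$ otherwise.
\end{enumerate}
The cleanest rigorous form of step 3 is a component count. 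Since $a,u_1,u_2$ form a triangle, $|L|=m+1$ is impossible. If $|L|=m$, then $L$ has exactly two components, and the argument in the fan case of Lemma~\ref{lem:modes} shows that $b$ is then an endpoint with $\deg_L(b)=1$. When $k_b=0$, therefore, we get $|L|\le m-1$.

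For the serpent, first note that the interior contains a triangle (Lemma~\ref{lem:serpent-geometry}), so the interior alone has at least two components and $|L|\le m$ in all cases.
\begin{enumerate}
\item If $k_a=k_b=0$, then both roots are isolated and $L$ lives on the interior, which has at least two components. This gives $|L|\le m-2$.
\item If exactly one allowance is zero, say $k_b=0$, then $b$ is isolated, and we need $|L|\le m-1$, that is, at least three components. Suppose instead there are two components: $\{b\}$ and an induced path through $a$ and all the interior. That path would contain the internal triangle. This is a contradiction.
\item If both allowances are at least one, the bound $m$ is the general bound.
\end{enumerate}
Attainment uses the fixed two-root construction, which has $m$ edges and uses each root exactly once. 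Deleting the root edge at each root with zero allowance lowers the count by one per deletion and keeps an induced forest. This gives the formula with $\one_{k_a\ge1}+\one_{k_b\ge1}$.

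The main obstacle is the fan case when $k_a\ge1$. There we must exclude configurations using edges at the key $a$ that might beat $m-1+\one_{k_b\ge1}$. The component count above handles this uniformly: $|L|=m+1$ is impossible, and $|L|=m$ forces $\deg_L(b)=1$ by the fan argument of Lemma~\ref{lem:modes}. I will check that this fan argument in Lemma~\ref{lem:modes} uses only inducedness and not executability, so that it applies to undirected forests.
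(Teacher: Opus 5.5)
Your proposal is correct and follows essentially the paper's proof. The upper bounds come from counting components: in the fan, the triangle $a u_1 u_2$ forces at least two components among the non-singleton part, with $b$ a singleton when $k_b=0$; in the serpent, the internal triangle does the same, with each unavailable root a singleton. The bounds are attained by the fixed full forests with the unavailable root incidences deleted. The heuristic in your fan step 3 is unnecessary and not literally true: for $m=2$ the forest $\{bu_1,au_2\}$ is optimal, yet $a$ is not isolated. Drop it in favour of the component count you give afterwards, which is exactly the paper's argument.
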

\begin{proof}
Here an unavailable root has selected degree zero. For a fan keyed at $a$,
the interior vertices form a path on $m$ vertices. Its $m-1$ edges are
always available. If the nonkey $b$ is available, attaching it to the end
$u_1$ gives $m$ edges and uses no degree at $a$. More than $m$ edges on the
$m+2$ vertices would force a single spanning path, which is impossible in
a graph containing a triangle. If $b$ is unavailable, it is a singleton.
When $a$ is used, the remaining $m+1$ vertices contain a triangle and require
at least two induced-path components, giving at most $m-1$ edges. When $a$
is also unused, the $m$ interior vertices have at most $m-1$ edges. These
bounds prove the fan formula.

For a serpent, the interior graph already contains a triangle by
Lemma~\ref{lem:serpent-geometry}. If precisely $q\in\{0,1,2\}$ roots are
available, the other $2-q$ roots are singletons. On the remaining $m+q$
vertices an induced linear forest has at least two components, hence at most
$m+q-2$ edges. The full two-root construction has $m$ edges and uses exactly
one selected edge at each root. Delete the incidence at every unavailable
root. A forest edge deletion preserves inducedness of each remaining path
and removes precisely $2-q$ edges, attaining $m+q-2$. Thus an allowance of
two at one root cannot improve the bound: one incidence at that root already
attains its entire possible contribution.
\end{proof}

\begin{theorem}\label{thm:path-capacity}
For the canonical core of a maximal biconnected outerplanar graph with branching weak dual,
\begin{equation}\label{eq:path-capacity}
 P(G)=|V(C)|-\nu(C).
\end{equation}
\end{theorem}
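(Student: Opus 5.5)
We prove the two inequalities separately, working throughout with Lemma~\ref{lem:linear-forest}: $P(G)=|V|-\max|L|$, where $L$ ranges over edge sets with $\deg_L\le2$ and at most one selected edge per face. Write $|V|=r+\sum_i m_i$, where $m_i$ is the number of interior vertices of strip $H_i$.

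\emph{Upper bound $P(G)\le r-\nu(C)$.} Fix $\theta$ and $J\in\cJ(C,\theta)$ attaining $\nu(C)$. Put $L=J$ together with the full terminal forests: for a fan, the path $b,u_1,\ldots,u_m$ from its nonkey; for a serpent, the two-root construction; for an ear, the edge from its demanded root $\theta_i$ to the apex. A fan or ear then uses one degree at its demanded root. A serpent uses one degree at each root. By construction $\deg_L(v)=\deg_J(v)+t_v\le2$. The face condition holds on core faces because $J\in\cJ$ and no root edge is selected. It holds on strip faces because the strip forests are induced. Counting edges, the strips contribute $m_i$ each, so
\[
 |L|=|J|+\textstyle\sum_i m_i,
\]
and hence $P(G)\le |V|-|L|=r-|J|$.

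\emph{Lower bound $P(G)\ge r-\nu(C)$.} Take an optimal $L$ and normalize it strip by strip, as in Theorem~\ref{thm:terminal-normal} but now undirected.
\begin{enumerate}
\item If $L$ contains a root edge $ab\in\cR$, delete it. This frees one degree at each of $a$ and $b$. By Lemma~\ref{lem:strip-undirected}, the strip can then be re-selected with one more edge using those freed allowances, unless its interior already attains its maximum with both roots used. The serpent case with both root slots externally occupied needs a count parallel to the one in Theorem~\ref{thm:terminal-normal}: there the local count is at most $m-1$, so the replacement loses nothing.
\item With all roots unselected, let $k_v$ be the degree at root $v$ left over by $L\cap E(C)$. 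Lemma~\ref{lem:strip-undirected} shows that each strip contributes at most $m_i$ edges if its demanded roots have allowance, and one fewer for each demanded root without allowance. A fan's key is never demanded.
\item Whenever a strip loses an edge because a demanded root $v$ is saturated by core edges, delete one core edge of $J$ at $v$ instead and restore the strip to full. This does not decrease $|L|$.
\end{enumerate}
Repeating step (c) terminates with $\deg_J(v)\le c_v$ everywhere and every strip full. For an ear, choose $\theta_i$ to be whichever root its edge uses, or either root if the ear is empty and nothing is lost. The final $J$ lies in $\cJ(C,\theta)$ and $|L|=|J|+\sum_i m_i$, which gives $P(G)\ge r-\nu(C)$.

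The main obstacle is the bookkeeping in step (c). A core vertex may carry two strips, and a single core deletion must repair a deficit without creating a new one elsewhere. Deleting a core edge only lowers degrees, so no new violation arises at the other endpoint. Each deletion repairs at least one lost strip edge, so $|L|$ never decreases. The face condition and inducedness survive because Lemma~\ref{lem:linear-forest} reduces them to local conditions, and those are checked face by face.
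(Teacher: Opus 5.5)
Your proof is correct and follows essentially the paper's route. You remove selected terminal roots: a strip together with its root edge has at most $m$ edges, and after the deletion both roots have allowance at least one, so Lemma~\ref{lem:strip-undirected} restores $m$. You then trade each core edge at an over-capacity vertex for a recovered strip edge. The paper expresses this trade through the exact value $g(J)=|J|-\sum_v(\deg_J(v)-c_v)_+$ above each $J$, which handles two strips sharing a core vertex automatically.

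In your direct exchange on $L$, add the short optimality remark that a deprived demanded root always carries a core edge. If both of its slots were used by the neighbouring strip, re-selecting that strip with at most one incidence there, which Lemma~\ref{lem:strip-undirected} permits, would enlarge $L$. Also state that each deletion restores one strip edge, not necessarily the whole strip.
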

\begin{proof}
Begin with a maximum induced linear forest on $G$. If a terminal root $ab$
is selected, its restriction to a strip with $m$ interior vertices has at
most $m$ edges, since that strip contains a triangle. The root already uses
one degree at each endpoint. Replace the whole strip restriction, including
the selected root, by the two-root forest with $m$ edges and root degree at
most one. No external degree increases. The core face adjacent to $ab$
loses that selected edge, and every strip face satisfies the local condition.
Lemma~\ref{lem:linear-forest} therefore proves that the replaced global
selection is still an induced forest and has no fewer edges. Repeating gives
a maximum forest with all terminal roots unselected.

Every selected ear edge chooses its demanded root; an empty ear may be
allocated either way. Fix such an allocation $\theta$ and a core selection
$J$ satisfying degree at most two and the face conditions. At vertex $v$,
let $s_v=2-\deg_J(v)$ be its remaining slots and $t_v$ its demanded terminal
incidences. A demand needs one slot, and the minimum number of unsatisfied
demands is
\[
 (t_v-s_v)_+=(\deg_J(v)-c_v)_+.
\]
The choices of which demands receive slots can be made independently at
each core vertex. Lemma~\ref{lem:strip-undirected} then realizes every
resulting subset of available demanded roots, losing exactly one local edge
for each unfulfilled demand. An ear has the same rule with one demand.
The union satisfies every degree and face condition, so this local count
is attained by a global induced forest. Conversely those conditions and
the local upper bounds rule out any larger count. Hence the exact optimum
above $J$ is
\begin{equation}\label{eq:soft-penalty}
 \sum_i m_i+|J|-\sum_v(\deg_J(v)-c_v)_+.
\end{equation}
An arbitrary forest is included by choosing its actual ear allocation;
maximizing over $\theta$ therefore loses no optimum.

Write $g(J)=|J|-\sum_v(\deg_J(v)-c_v)_+$. If $v$ exceeds its capacity,
choose a selected edge $vw$. Deleting it reduces $|J|$ by one and reduces
the penalty at $v$ by one. At $w$ it reduces the penalty by one precisely
when $w$ was also over capacity. Thus
\[
 g(J\setminus\{vw\})-g(J)
 =-1+1+\mathbf1_{\deg_J(w)>c_w}\ge0.
\]
The face inequalities persist under deletion. The edge count decreases at
each step, so after finitely many steps we obtain a hard-capacity set $J'$
with $|J'|=g(J')\ge g(J)$. Conversely every hard-capacity set has zero
penalty. Maximizing~\eqref{eq:soft-penalty} therefore gives
$\sum_i m_i+\nu(C)$ selected edges. Since $G$ has
$|V(C)|+\sum_i m_i$ vertices, subtracting edges from vertices gives
\eqref{eq:path-capacity}.
\end{proof}
Theorems~\ref{thm:hard-general} and~\ref{thm:path-capacity} will now
be combined with the direction constraints on the resulting path forests.

\section{Path phases and the repair formula}\label{sec:phases}

\subsection{Blocking circuits}
Let $\cP=(P_1,\ldots,P_m)$ be any vertex-disjoint induced path cover of a finite simple graph. Fix a reference order on each nontrivial path. A sign $s_i\in\{1,-1\}$ follows or reverses this order. Singletons are initially black and have no sign variable.

For a nonterminal vertex $u$ in the chosen direction write $u^+$ for its successor. The vertex dependency graph has arcs
\begin{equation}\label{eq:vertex-dependency}
 w\longrightarrow u^+\qquad
          (w\in N[u]\setminus\{u^+\}).
\end{equation}
As in Lemma~\ref{lem:incidence}, the prescribed directions are executable if and only if this graph is acyclic. Dependencies within one induced path point forward.

\begin{definition}\label{def:blocking}
A blocking circuit consists of $k\ge2$ distinct nontrivial covering paths $P_{i_1},\ldots,P_{i_k}$, distinct vertices $a_j,b_j$ on each of them, and edges $b_ja_{j+1}$ for all $j$ modulo $k$. Let $\sigma_j=1$ if $a_j$ precedes $b_j$ in the reference order, and $-1$ otherwise. Its selected path segments and cross edges form a simple cycle in the original graph.
\end{definition}

The chain-twist criterion of Cameron, Janssen, Mathew, and
Zhang~\cite[Definition~2.2 and Theorem~2.2]{CJMZ} has the following
path-segment form.

\begin{theorem}\label{thm:blocking}
A direction choice is not executable if and only if some blocking circuit satisfies either $s_{i_j}=\sigma_j$ for every $j$, or $s_{i_j}=-\sigma_j$ for every $j$.
\end{theorem}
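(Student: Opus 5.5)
The plan is to work directly with the vertex dependency graph \eqref{eq:vertex-dependency}. By the remark after it, which is Lemma~\ref{lem:incidence} with the ports $p_v,q_v$ eliminated, a direction choice is executable exactly when this graph is acyclic. So it suffices to show that a directed cycle exists if and only if some blocking circuit is oriented coherently, meaning $s_{i_j}=\sigma_j$ for all $j$ or $s_{i_j}=-\sigma_j$ for all $j$.

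\textbf{Blocking circuit gives a cycle.} Suppose $s_{i_j}=\sigma_j$ for all $j$. Then on each path $P_{i_j}$ the vertex $a_j$ precedes $b_j$ in the chosen direction. Walking forward along $P_{i_j}$ from $a_j$ to $b_j$ gives dependency arcs $x\to x^{+}$, since $x\in N[x]$ and $x^+\ne x$. The cross edge $b_ja_{j+1}$ needs an arc from $b_j$ into $P_{i_{j+1}}$ at or before $a_{j+1}$. I would choose the realisation carefully, using that $a_{j+1}$ is a neighbour of $b_j$:
\begin{itemize}
\item if $a_{j+1}$ is not terminal in its path, then $b_j\to a_{j+1}^{+}$ is an arc, because $b_j\in N[a_{j+1}]\setminus\{a_{j+1}^+\}$;
\item otherwise I would exploit symmetry, choosing among the predecessors of $a_{j+1}$ and shifting the arcs so that the walk closes.
\end{itemize}
Concatenating gives a closed directed walk $a_1\leadsto b_1\to a_2^{+}\leadsto b_2\to\cdots$. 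It is nonempty, since each segment contains the forward step out of $a_j$ or the cross arc, so acyclicity fails. The case $s=-\sigma$ is the reversal and follows from Lemma~\ref{lem:deletion}, reversal preserving executability; I would check that reversal exchanges the two sign patterns. I expect that the endpoint case, where $a_{j+1}$ is the last vertex, forces a slightly different normalisation of which vertex is named $a_j$. The first thing I would try is defining $a_j$ as the vertex whose successor is reached, so that terminal vertices never occur as $a_j$.

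\textbf{A cycle gives a blocking circuit.} This is the main direction. Take a directed cycle in the dependency graph, chosen minimal, for instance shortest. Every arc $w\to u^+$ has $w$ adjacent or equal to $u$, with $u$ on some path. Within one path all arcs point forward, because each path is induced and so the only neighbours of $u$ on its own path are $u^\pm$. Hence a cycle cannot stay in one path; it must leave each path it visits. Group the cycle into maximal runs inside single paths. Each run gives a forward segment from an entry vertex $a_j$ to an exit vertex $b_j$, and each transition between runs gives a cross edge $b_ja_{j+1}$. Here I would replace the target $u^+$ by $u$, which is adjacent to $w$, or equal to $w$.

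The obstacle is guaranteeing \emph{distinct} paths and a \emph{simple} cycle in $G$, as Definition~\ref{def:blocking} demands. If some path is visited twice, I would shortcut: with two visits to $P_i$ whose entry and exit positions are ordered compatibly, the forward order along $P_i$ yields a shorter directed cycle, contradicting minimality, possibly via an auxiliary dependency arc. This is essentially the chain-twist reduction of \cite{CJMZ}, and I would follow their Theorem~2.2 argument, translating their forcing chains into path segments. Once the paths are distinct, the segments are disjoint and the cross edges join distinct paths, so the union is a simple cycle. All segments are traversed forward, so $s_{i_j}=\sigma_j$ for every $j$, relative to the orientation in which the cycle was followed. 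The opposite cyclic orientation gives the $-\sigma$ condition.
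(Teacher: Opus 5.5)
Your first direction is correct and is the paper's argument, but the case you worry about cannot arise. $s_{i_{j+1}}=\sigma_{j+1}$ says precisely that $a_{j+1}$ strictly precedes $b_{j+1}$ in the chosen direction. So $a_{j+1}^+$ always exists, and $b_j\to a_{j+1}^+$ is an arc ($b_j\ne a_{j+1}^+$, since they lie on different paths). No renormalisation is needed, and the $-\sigma$ case by reversal via Lemma~\ref{lem:deletion} is fine.

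The genuine gap is in the converse, at the step that makes the paths of the circuit distinct, as Definition~\ref{def:blocking} requires. You take a shortest dependency cycle and assert that a second visit to a path can be shortcut to a shorter cycle, but length is the wrong quantity. Suppose the two runs on $P_i$ have entries and exits $e_1\preceq x_1\prec e_2\preceq x_2$ in the chosen order. The only shortcut your argument offers replaces the excursion from $x_1$ to $e_2$ through other paths by the forward arcs along $P_i$ from $x_1$ to $e_2$. That stretch can be far longer than the excursion, which may consist of just two cross arcs. So a shortest cycle may revisit a path with no contradiction. The other orderings are left open (``ordered compatibly'', ``auxiliary dependency arc''), and appealing to the chain-twist theorem names the result rather than proving it.

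The step is repaired by changing what you minimise. Among closed walks in the dependency graph, take one with the fewest maximal single-path runs. If two runs on the same $P_i$ begin at entries $e\preceq e'$, replace the portion of the walk from $e$ to $e'$ by the arcs $y\to y^+$ along $P_i$. This merges the two runs and discards at least one run on another path, so it strictly lowers the count whatever the order of the exits. A run-minimal walk therefore meets each path in a single forward run and is a simple cycle. Your conversion (predecessor of the entry as $a_j$, exit as $b_j$, cross edge $b_ja_{j+1}$) then gives a blocking circuit with $s_{i_j}=\sigma_j$.

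The paper avoids this bookkeeping altogether. It performs prescribed arrows until the process stalls. Each unfinished path has a last black vertex $a_i$, and $a_i$ has a white neighbour $w_i\ne a_i^+$ on a different unfinished path; this defines a self-map on unfinished paths. A cycle of that map gives the circuit. Distinctness of the paths and the common orientation of the segments (each $a_j$ black, each $b_j$ white) then come for free.
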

\begin{proof}
First suppose every segment of the circuit points from $a_j$ to $b_j$. The vertex $a_j$ then has a selected successor $a_j^+$ on its covering path. The contact edge $b_ja_{j+1}$ forces the prerequisite
\[
 b_j\longrightarrow a_{j+1}^+.
\]
Following the selected path from $a_{j+1}^+$ to $b_{j+1}$ gives a directed dependency path. Concatenating these paths around the circuit gives a directed closed walk, so no chronology can execute the arrows. If all segment directions are reversed, traverse the same circuit in the opposite cyclic order and use the same argument.

For the converse, begin with all path starts black and perform prescribed arrows for as long as possible, choosing any currently legal prescribed arrow. By assumption the process stops before every path is complete. On each unfinished path $P_i$, its black vertices form an initial segment in the chosen direction: the only prescribed incoming arrow of a noninitial vertex is from its predecessor. Let $a_i$ be its last black vertex and $a_i^+$ its next, white vertex.

The arrow $a_i\to a_i^+$ is not legal. Since $a_i$ is black and $a_i^+$ is white, there is a further white neighbour $w_i\ne a_i^+$. This neighbour cannot lie on $P_i$. A preceding vertex of $P_i$ is black, and an edge from $a_i$ to a later vertex other than its successor would be a chord of this induced path. Thus $w_i$ lies on another unfinished path; denote its index by $f(i)$. This constructs a map from the finite nonempty set of unfinished paths to itself, with $f(i)\ne i$.

An iteration of $f$ reaches a cycle of distinct indices. Relabel this cycle in reverse order as $i_1,\ldots,i_k$, so that $f(i_{j+1})=i_j$ cyclically. On $P_{i_j}$ put
\[
 b_j=w_{i_{j+1}}.
\]
The contact edge $b_ja_{i_{j+1}}$ exists by construction. The vertex $a_{i_j}$ is black and $b_j$ is white, so they are distinct and $a_{i_j}$ strictly precedes $b_j$ in the chosen direction of $P_{i_j}$. Hence the nonempty segments from $a_{i_j}$ to $b_j$, together with these contact edges, form a blocking circuit with every segment pointing in the same cyclic direction. This is one of the two forbidden sign patterns. Singleton paths were initially complete and never enter the construction.
\end{proof}

\subsection{The sign scheme}
Let $I=\{i:|P_i|>1\}$. Define
\[
 B_{\cP}=\CC[s_i:i\in I]/(s_i^2-1:i\in I).
\]
For a blocking circuit $\Gamma$ define two idempotents
\begin{equation}\label{eq:projectors}
 e_\Gamma^+=\prod_{j=1}^k\frac{1+\sigma_js_{i_j}}2,
 \qquad
 e_\Gamma^-=\prod_{j=1}^k\frac{1-\sigma_js_{i_j}}2.
\end{equation}
Set
\begin{equation}\label{eq:orientation-scheme}
 \cO_{\cP}=\Spec\bigl(B_{\cP}/(e_\Gamma^+,e_\Gamma^-:\Gamma\text{ blocking})\bigr).
\end{equation}

\begin{proposition}\label{prop:orientation-scheme}
The complex points of $\cO_{\cP}$ are exactly the executable directions of $\cP$. The scheme is finite, split, and reduced.
\end{proposition}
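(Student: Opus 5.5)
The plan is to work entirely inside the finite algebra $B_{\cP}$. Since the $s_i^2-1$ for distinct variables are separable polynomials in separate variables, $B_{\cP}\cong\prod_{\epsilon\in\{\pm1\}^I}\CC$ by the Chinese remainder theorem. Explicitly, the evaluation maps $s\mapsto\epsilon$ give an isomorphism onto $\CC^{\{\pm1\}^I}$, with primitive idempotents
\[
 e_\epsilon=\prod_{i\in I}\frac{1+\epsilon_is_i}2 .
\]
Thus $\Spec B_{\cP}$ is the reduced finite split scheme of $2^{|I|}$ rational points, one for each direction choice. First I will record this decomposition. It uses only that each $\CC[s_i]/(s_i^2-1)\cong\CC\times\CC$ and that tensor products of split reduced algebras are split reduced.

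Next I will use that every ideal of a finite product of fields is generated by the idempotent of a subset of factors. The quotient by any ideal is therefore again a product of copies of $\CC$. Hence it is finite, split and reduced, and its points form the subset of $\{\pm1\}^I$ on which all generators vanish. This disposes of the structural claims immediately. The remaining task is to identify the zero set of the generators $e_\Gamma^{\pm}$.

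For that, I evaluate $e_\Gamma^+$ at a point $\epsilon$. Each factor $(1+\sigma_j\epsilon_{i_j})/2$ equals $1$ if $\epsilon_{i_j}=\sigma_j$ and $0$ otherwise. Because the paths $P_{i_1},\dots,P_{i_k}$ in a blocking circuit are distinct (Definition~\ref{def:blocking}), there is no conflicting requirement on one variable. So $e_\Gamma^+(\epsilon)=1$ exactly when $\epsilon_{i_j}=\sigma_j$ for all $j$, and it is $0$ otherwise. Similarly, $e_\Gamma^-(\epsilon)=1$ exactly when $\epsilon_{i_j}=-\sigma_j$ for all $j$. A point $\epsilon$ therefore lies in $\cO_{\cP}$ if and only if no blocking circuit has either monochromatic sign pattern. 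By Theorem~\ref{thm:blocking}, this is precisely executability of the direction $\epsilon$. Singletons carry no variable and are handled by the convention in Theorem~\ref{thm:blocking}.

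The only delicate point is the distinctness of the paths in a circuit, which makes $e_\Gamma^\pm$ genuinely idempotent indicator functions. One also needs the set of blocking circuits to be finite, or at least the ideal to be finitely generated. This is automatic, since the graph is finite and $B_{\cP}$ is Noetherian. I do not expect a real obstacle. The substance lies in Theorem~\ref{thm:blocking}, which is assumed.
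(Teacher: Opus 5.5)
Your proof is correct and follows the paper's proof: you decompose $B_{\cP}\cong\prod_{\varepsilon}\CC$ via the idempotents $e_\varepsilon$, note that each $e_\Gamma^\pm$ evaluates to $0$ or $1$ at a sign vector so the quotient keeps exactly the factors with no forbidden circuit pattern, and conclude by Theorem~\ref{thm:blocking}. One small correction: the step you flag as the only delicate point is not needed, since each factor $(1\pm\sigma_j s_{i_j})/2$ is already $0$ or $1$ at a sign vector, so $e_\Gamma^\pm$ is an idempotent indicator even without distinctness of the paths.
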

\begin{proof}
Evaluation on the sign vectors gives
\[
 B_{\cP}=\CC[s_i:i\in I]/(s_i^2-1)
 \cong\prod_{\varepsilon\in\{\pm1\}^I}\CC.
\]
One can see this directly from the mutually orthogonal idempotents
\[
 e_\varepsilon=\prod_{i\in I}\frac{1+\varepsilon_is_i}{2},
 \qquad \sum_\varepsilon e_\varepsilon=1.
\]
At a sign vector $\varepsilon$, each blocking projector $e_\Gamma^+$ or
$e_\Gamma^-$ is either zero or one. It is one exactly when all segments
of that circuit have one of its two forbidden orientations. Thus the ideal
generated by all projectors has the full factor $\CC$ at an excluded sign
and the zero factor at every other sign. Its quotient is the product of
$\CC$ indexed by the surviving sign vectors.

By Theorem~\ref{thm:blocking}, those survivors are exactly the executable
orientations of the prescribed paths. Their starts and the singleton paths
are the corresponding initial set. The quotient product is finite, split,
and reduced, and is zero precisely when no executable orientation exists.
\end{proof}

\subsection{The two-path cohomology class}
Suppose two reference paths are $x_0,\ldots,x_p$ and $y_0,\ldots,y_q$. Two cross edges $x_ay_b,x_cy_d$ with $a\ne c,b\ne d$ give
\begin{equation}\label{eq:contact-sign}
 \kappa=\sgn((c-a)(d-b)).
\end{equation}
The circuit follows $x_a$ to $x_c$ and then $y_d$ to $y_b$, so the segment sign product is $-\kappa$. Its two cyclic directions are excluded precisely by
\begin{equation}\label{eq:pair-sign}
 s_i s_j=\kappa.
\end{equation}
The two associated projector generators generate the same ideal as $s_is_j-\kappa$: their sum is $(1-\kappa s_is_j)/2$, and each is this sum multiplied by a one-path sign projector. In the induced subgraph on just these two paths, all blocking circuits have this form, so these equations are also sufficient there.

For each pair of cross edges $x_ay_b,x_cy_d$ with $a\ne c$ and $b\ne d$,
place an edge between the corresponding path vertices and label it by
$\sgn((c-a)(d-b))$. The resulting signed multigraph is $\Lambda_{\cP}$.
Identical repeated constraints may be merged; parallel edges with opposite
labels are retained. Write $\kappa_e=(-1)^{b_e}$ and
$s_i=(-1)^{\epsilon_i}$.

On a graph $\Lambda$, the constant cellular sheaf $\underline{\FF_2}$ has stalk $\FF_2$ at each vertex and edge, with identity incidence maps. Its cochain complex is
\[
 C^0(\Lambda;\FF_2)\xrightarrow{\delta}C^1(\Lambda;\FF_2),
 \qquad (\delta\epsilon)_{ij}=\epsilon_i+\epsilon_j.
\]
Here $H^1$ means the cokernel of this map. Equations~\eqref{eq:pair-sign} are exactly $\delta\epsilon=b$.

\begin{proposition}\label{prop:pair-cohomology}
All two-path constraints are simultaneously soluble if and only if
\[
 [b]=0\quad\text{in }H^1(\Lambda_{\cP};\FF_2).
\]
Equivalently every closed walk has label product $1$. If soluble, the solution set has $2^c$ points, where $c$ is the number of connected components, including isolated nontrivial paths. The class is independent of the reference path directions.
\end{proposition}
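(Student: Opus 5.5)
The statement is linear algebra over $\FF_2$ once the translation $s_i=(-1)^{\epsilon_i}$, $\kappa_e=(-1)^{b_e}$ is fixed. The plan is to check the identification, read off solvability and the solution count from the cochain map $\delta$, and then treat the change of reference directions as a coboundary.

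\textbf{Step 1: identification with $\delta\epsilon=b$.} By the discussion preceding the statement, each edge $e=\{i,j\}$ of $\Lambda_{\cP}$ imposes exactly $s_is_j=\kappa_e$. In additive form this reads $\epsilon_i+\epsilon_j=b_e$. Parallel edges with equal labels give the same equation, so merging them changes nothing. Parallel edges with opposite labels give contradictory equations; correspondingly, their difference is a cycle on which $b$ is nonzero. The system of all two-path constraints is therefore exactly $\delta\epsilon=b$ with $\epsilon\in C^0(\Lambda_{\cP};\FF_2)$.

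\textbf{Step 2: solvability.} The system has a solution iff $b\in\operatorname{im}\delta$, that is, iff $[b]=0$ in $\operatorname{coker}\delta=H^1$. For the closed-walk formulation, argue as follows.
\begin{enumerate}
\item Necessity: along a closed walk the terms $\epsilon_i+\epsilon_j$ telescope to zero, so a solution forces the label sum to be zero.
\item Sufficiency: on each component choose a spanning tree and a root with $\epsilon=0$. Propagate $\epsilon$ along the tree edges. A non-tree edge closes a fundamental cycle, and its equation holds because that cycle has label product $1$.
\end{enumerate}
Loops do not occur, because contacts are only defined between distinct paths.

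\textbf{Step 3: solution count.} The solution set is a coset of $\ker\delta$. A cochain $\epsilon$ lies in $\ker\delta$ exactly when it is constant on every connected component. Hence $|\ker\delta|=2^c$, where isolated vertices (nontrivial paths with no constraints) each count as one component.

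\textbf{Step 4: independence of reference directions.} Reverse the reference order of path $i$, sending index $a\mapsto p-a$. In \eqref{eq:contact-sign}, every edge incident to $i$ then has $(c-a)$ negated, so its label changes by $(-1)$. Edges between other pairs are unaffected. Reversing a set of paths therefore changes $b$ by $\delta\chi$, where $\chi$ is the indicator of the reversed set, and the class $[b]$ is unchanged.

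The only delicate point is Step 1, specifically the claim that contradictory parallel edges are handled correctly by the class. It is resolved by retaining opposite-labelled parallel edges in the multigraph, as the construction of $\Lambda_{\cP}$ prescribes. Everything else is routine.
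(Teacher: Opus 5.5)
Your proposal is correct and follows essentially the same route as the paper. You identify the constraints with $\delta\epsilon=b$, read solvability off $\operatorname{coker}\delta$ with a spanning-forest propagation for the closed-walk criterion, count solutions as a coset of the locally constant cochains $\ker\delta$, and show that reversing reference paths changes $b$ by a coboundary $\delta\chi$ (reversing several paths at once rather than one at a time is an immaterial difference).
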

\begin{proof}
Write $\kappa_e=(-1)^{b_e}$ and $s_i=(-1)^{\epsilon_i}$. Then
\[
 s_is_j=\kappa_e
 \quad\Longleftrightarrow\quad
 \epsilon_i+\epsilon_j=b_e\quad\text{in }\FF_2.
\]
The full system is precisely $\delta\epsilon=b$ in the displayed cochain complex. Since the graph has no two-dimensional cells, every one-cochain is a cocycle, and the system has a solution exactly when $b$ belongs to $\operatorname{im}\delta$, or $[b]=0$ in its cokernel.

To make the criterion constructive, choose a spanning forest, including all vertices, and fix a value of $\epsilon$ at one root per component. Along a tree edge $ij$, set $\epsilon_j=b_{ij}+\epsilon_i$. The unique tree path gives a unique value at every vertex. For a nontree edge $uv$, the propagated values satisfy its equation exactly when the sum of $b$ along that edge and the tree path from $u$ to $v$ is zero. Multiplicatively this is the product-one condition around its fundamental closed walk. A parallel edge is a nontree edge of this same calculation and cannot be dropped when its sign differs. Every other closed walk is a sum of such cycles, so these tests are sufficient as well as necessary.

If a solution exists, the difference of two solutions lies in $\ker\delta$, consisting of the functions constant on each component. There are $2^c$ such functions, including the independent value on each isolated nontrivial path. Finally, reversing a reference path changes the signs of all incident constraints. It replaces $b$ by $b+\delta\chi_i$, where $\chi_i$ is the indicator of that path vertex. Hence its class is unchanged.
\end{proof}
The specified class $[b]$ is the obstruction to trivializing the local $\mu_2$ transition data.

If the two-path equations are soluble, choose a solution $a_i\in\{1,-1\}$. All solutions have the form
\[
 s_i=a_i z_{c(i)},\qquad z_c^2=1,
\]
where $c(i)$ is the component of $\Lambda_{\cP}$. For a longer blocking circuit put $\nu_j=\sigma_ja_{i_j}$. If opposite $\nu_j$ occur in the same component, both substituted projectors vanish identically, since they contain $(1+z_c)(1-z_c)$. Otherwise repeated factors are idempotent, and the remaining equations are precisely
\begin{equation}\label{eq:residual}
 \prod_{c\in C(\Gamma)}\frac{1+\nu_cz_c}2=0,
 \qquad
 \prod_{c\in C(\Gamma)}\frac{1-\nu_cz_c}2=0.
\end{equation}
A residual circuit involving one component is inconsistent; two components
produce another relation of the form~\eqref{eq:pair-sign}; every larger
support contributes its displayed projector.

Let $T_{\cP}$ be the simple graph on the nontrivial covering paths, with
an edge $ij$ whenever an edge of the original graph joins $P_i$ to $P_j$.
If $T_{\cP}$ is a forest, every blocking circuit uses exactly two covering
paths: a circuit on three or more distinct paths would project to a cycle
in $T_{\cP}$. In this case the equations of
Proposition~\ref{prop:pair-cohomology} are sufficient for executability.

For example the graph with faces $012,013,124,205$ has the cover $(0,3),(1,4),(2,5)$. There are no two-path constraints, but the circuit $0,3,1,4,2,5,0$ gives
\[
 (1+s_0)(1+s_1)(1+s_2)=0,
 \qquad (1-s_0)(1-s_1)(1-s_2)=0.
\]
Here $\Lambda_{\cP}$ has no edges, whereas $T_{\cP}$ is a triangle.
The full scheme has six points and the two-path scheme has eight.

\subsection{Designated cuts and the main formula}\label{sec:phase-repair}
Retain the fixed full fan and serpent forests from
Section~\ref{sec:strips}, and denote this family by $\mathcal F$. Fix
$\theta$ and $J\in\cJ(C,\theta)$ from~\eqref{eq:allowed-J}. In each ear
use the edge joining its interior vertex to $\theta_i$. Their unoriented
union with $J$ is denoted by $L(J)$.

Each local full forest has $m_i$ selected edges. At a core vertex
\[
 \deg_{L(J)}(v)=\deg_J(v)+t_v\le2,
\]
and every triangular face contains at most one selected edge. Thus Lemma~\ref{lem:linear-forest} gives a spanning induced forest with
\begin{equation}\label{eq:full-cover-count}
 |\cP(L(J))|=|V(C)|-|J|.
\end{equation}

Let $\cA_\theta$ consist of the selected edges connecting each demanded root to the interior of its strip. Each such edge has one endpoint at a root and the other in the strip interior. For $D\subseteq\cA_\theta$, set
\[
 L(J,D)=L(J)\setminus D,
 \qquad \cP(\theta,J,D)=\text{its induced path components}.
\]
Deleting a forest edge creates one further component, so
\begin{equation}\label{eq:cut-count}
 |\cP(\theta,J,D)|=|V(C)|-|J|+|D|.
\end{equation}
Apply~\eqref{eq:orientation-scheme} to this particular cover and write its scheme as $\cO_{\theta,J,D}$. Define
\begin{equation}\label{eq:tau}
 \tau_C(\theta,J)=
 \min\{|D|:D\subseteq\cA_\theta,\ \cO_{\theta,J,D}\ne\varnothing\},
\end{equation}
with $+\infty$ for an empty minimum. For each $D$, feasibility is given
by the blocking equations~\eqref{eq:projectors}.

\begin{theorem}\label{thm:main}
Let $G$ be a maximal biconnected outerplanar graph whose weak dual has a degree-three node, and let $C$ be its canonical core. Then
\begin{equation}\label{eq:main}
 \boxed{Z(G)=r-\max_{\theta,J\in\cJ(C,\theta)}
                  \bigl(|J|-\tau_C(\theta,J)\bigr),\qquad r=|V(C)|.}
\end{equation}
If the weak dual has $\ell$ leaves and $d_2$ degree-two nodes retained in its canonical core, then equivalently
\[
 Z(G)=\ell-\max_{\theta,J}(|J|-d_2-\tau_C(\theta,J)).
\]
For a weak-dual path, $P(G)=Z(G)=2$.
\end{theorem}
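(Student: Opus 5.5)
We prove the two inequalities in \eqref{eq:main} separately. The weak-dual path case was settled in Section~\ref{sec:core}. The second displayed form then follows from Lemma~\ref{lem:core}, since $r=\ell+d_2$.

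\emph{Upper bound.} Fix $\theta$ and $J\in\cJ(C,\theta)$ with $\tau=\tau_C(\theta,J)<\infty$, and choose an attaining $D\subseteq\cA_\theta$.
\begin{enumerate}
\item By \eqref{eq:full-cover-count} and \eqref{eq:cut-count}, the cover $\cP(\theta,J,D)$ is an induced path cover with $r-|J|+|D|$ components.
\item Since $\cO_{\theta,J,D}\ne\varnothing$, Proposition~\ref{prop:orientation-scheme} gives an executable orientation of this cover.
\item By Theorem~\ref{thm:blocking} and the dependency criterion, that orientation is an executable arrow set. Its seed set has one vertex per path.
\item Hence $Z(G)\le r-(|J|-\tau)$.
\end{enumerate}
Minimizing over $(\theta,J)$ gives $Z(G)\le r-\max_{\theta,J}(|J|-\tau_C(\theta,J))$.

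\emph{Lower bound.} Here we invoke Theorem~\ref{thm:hard-general}. It provides an ear allocation $\theta$ and an optimal executable arrow set $M$ with the following properties: every terminal root is omitted, each strip is in a canonical table mode, and the core arrows have underlying set $J\in\cJ(C,\theta)$. We must show that the underlying forest of $M$ equals $L(J)\setminus D$ for some $D\subseteq\cA_\theta$. Once this holds, the paths of $M$ are an executable orientation of $\cP(\theta,J,D)$, so $|D|\ge\tau_C(\theta,J)$. The count \eqref{eq:cut-count} then gives
\[
 Z(G)=r-|J|+|D|\ge r-\bigl(|J|-\tau_C(\theta,J)\bigr).
\]

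The needed identification comes from Lemma~\ref{lem:modes}. Every table mode on a fan or serpent was realized on the fixed full forest by deleting root incidences only. For a fan, the only root incidence is at its nonkey, which is exactly the demanded root. For a serpent, the root incidences are at both roots, and both are demanded. In an ear, the mode either uses the edge to $\theta_i$ or is empty; we chose $\theta_i$ to match the used root. Thus within each strip, $M$ agrees with the full forest minus some subset of $\cA_\theta$. On the core, $M$ selects exactly $J$. Taking $D$ to be the set of demanded connections absent from $M$ completes the lower bound.

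\emph{Main obstacle.} The delicate point is the strip identification just described. Theorem~\ref{thm:hard-general} guarantees only table modes up to domination, so it gives the right occupations, relations and costs but not literally the fixed forest. I would handle this by replacing each strip's restriction by the realization from Lemma~\ref{lem:modes} on the fixed forest that has the same mode. By Lemma~\ref{lem:gluing}, this replacement preserves executability and arrow count, because the boundary state and the normalized cost are unchanged.
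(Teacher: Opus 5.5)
Your proposal is correct and follows essentially the same route as the paper's proof. For the upper bound you orient the cut forest $L(J,D)$ by a point of $\cO_{\theta,J,D}$. For the lower bound you take the normal-form optimum of Theorem~\ref{thm:hard-general} and replace each strip restriction by the fixed-full-forest realization of its table mode, which Lemma~\ref{lem:modes} and the gluing criterion justify; $D$ is then read off as the deleted demanded root incidences, exactly as the paper does.
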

\begin{proof}
Let $r=|V(C)|$. For fixed $\theta,J,D$, the full branch forests have $\sum_i m_i$ edges and the added core contributes $|J|$ edges. Their root degrees are at most two by the hard capacities, and every face has at most one selected edge. Thus their union is an induced forest. Deleting $D$ leaves exactly
\[
 r-|J|+|D|
\]
components. If $\cO_{\theta,J,D}$ has a point, orient these components according to that point. Proposition~\ref{prop:orientation-scheme} gives an actual executable arrow set. Its component starts are a zero-forcing set of that cardinality. Minimizing over $D$, and then over $\theta,J$, proves
\[
 Z(G)\le\min_{\theta,J}\bigl(r-|J|+\tau_C(\theta,J)\bigr).
\]

For the opposite inequality, apply Theorem~\ref{thm:hard-general} to obtain an optimal protocol with capacity-allowed core set $J$ and a choice $\theta$ for the ears. Its branches are in the table modes. Each such mode is realized by the fixed full forest of that same branch: a fan uses its nonkey path or deletes its root incidence; a serpent reverses both full root chains if necessary and deletes precisely the inactive root incidences; an ear uses its fixed demanded edge or deletes it. Lemma~\ref{lem:modes} proves that these realizations have exactly the recorded occupation, relation and cost. Hence replacing the local representatives by these particular realizations preserves global compatibility and the seed count.

Let $D$ be the union of the deleted root incidences. Together with the
unchanged core arrows, the selected forest is $L(J,D)$. Its actual directions are a point of $\cO_{\theta,J,D}$ and satisfy
\[
 Z(G)=r-|J|+|D|\ge r-|J|+\tau_C(\theta,J)
 \ge\min_{\theta',J'}\bigl(r-|J'|+\tau_C(\theta',J')\bigr).
\]
The two inequalities prove the formula. The normalized optimum supplies
a finite term in the minimum over the finite set of $\theta,J$.

Lemma~\ref{lem:core} gives $r=\ell+d_2$, proving the equivalent leaf formula. If the weak dual has no degree-three node, root it at an end face: the whole graph is a terminal strip and two black root vertices force it. Minimum degree two excludes one seed; a triangle excludes an induced one-path cover. Hence $P(G)=Z(G)=2$ in that case.
\end{proof}

\begin{corollary}\label{cor:gap}
With $\nu(C)$ as in Theorem~\ref{thm:path-capacity},
\[
 Z(G)-P(G)=\min_{\theta,J}\bigl(\nu(C)-|J|+\tau_C(\theta,J)\bigr).
\]
In particular $Z(G)=P(G)$ exactly when some maximum-cardinality allowed core selection has a nonempty phase scheme without root deletions.
\end{corollary}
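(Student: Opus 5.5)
The corollary follows by subtracting the two exact formulas already proved, so the plan is mostly bookkeeping. Theorem~\ref{thm:main} gives
\[
Z(G)=r-\max_{\theta,J}\bigl(|J|-\tau_C(\theta,J)\bigr)=\min_{\theta,J}\bigl(r-|J|+\tau_C(\theta,J)\bigr),
\]
where $\theta$ and $J\in\cJ(C,\theta)$ range over a finite set. Theorem~\ref{thm:path-capacity} gives $P(G)=r-\nu(C)$. Since $\nu(C)$ is a constant independent of $(\theta,J)$, subtracting $P(G)$ inside the minimum gives
\[
Z(G)-P(G)=\min_{\theta,J}\bigl(\nu(C)-|J|+\tau_C(\theta,J)\bigr).
\]
The conventions must be compatible. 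Terms with $\tau_C=+\infty$ simply do not attain the minimum. The minimum is finite, because Theorem~\ref{thm:hard-general} and the proof of Theorem~\ref{thm:main} supply a configuration with finite repair cost.

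For the equality statement, note that each term is a sum of two nonnegative integers. First, $\nu(C)-|J|\ge0$ by the definition of $\nu(C)$ as a maximum over the same index set. Second, $\tau_C(\theta,J)\ge0$, being a cardinality. Hence $Z(G)=P(G)$ if and only if some pair $(\theta,J)$ makes both terms vanish. That means $|J|=\nu(C)$, so $J$ is a maximum-cardinality allowed selection, and $\tau_C(\theta,J)=0$. By definition~\eqref{eq:tau}, the condition $\tau_C(\theta,J)=0$ means exactly that $D=\varnothing$ is feasible, i.e.\ $\cO_{\theta,J,\varnothing}\neq\varnothing$. By Proposition~\ref{prop:orientation-scheme}, this says the uncut forest $L(J)$ has an executable orientation.

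The only point needing care is the phrase ``maximum-cardinality allowed core selection''. It must be read as maximum over all pairs $(\theta,J)$ with $J\in\cJ(C,\theta)$, not for a fixed $\theta$, so that it matches $\nu(C)$. Beyond this, no genuine obstacle is expected.
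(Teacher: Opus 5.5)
Your proposal is correct and follows the paper's proof: subtract \eqref{eq:path-capacity} from \eqref{eq:main}, then observe that $\nu(C)-|J|$ and $\tau_C(\theta,J)$ are both nonnegative, so the minimum is zero exactly when both vanish. Your remarks on the $+\infty$ convention and on reading maximality over all pairs $(\theta,J)$ are accurate clarifications of that argument.
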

\begin{proof}
Subtract~\eqref{eq:path-capacity} from~\eqref{eq:main}. Both summands inside the resulting minimum are nonnegative. Their sum is zero precisely when both vanish.
\end{proof}

\begin{lemma}\label{lem:repair-monotone}
For fixed $\theta$ and fixed full forests, if $J'\subseteq J$ and $D'\supseteq D$, then
\[
 \cO_{\theta,J,D}\ne\varnothing\ \Longrightarrow\
 \cO_{\theta,J',D'}\ne\varnothing.
\]
Consequently $\tau_C(\theta,J')\le\tau_C(\theta,J)$.
\end{lemma}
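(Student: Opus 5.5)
The plan is to reduce the statement to Lemma~\ref{lem:deletion}: removing edges from a selected forest is the same as deleting selected arrows, and deleting arrows preserves executability. By construction,
\[
 L(J',D')=L(J)\setminus\bigl((J\setminus J')\cup D'\bigr)\subseteq L(J)\setminus D=L(J,D),
\]
because both forests use the same full terminal forests and the same ear edges for the fixed $\theta$. The hypotheses also make $L(J',D')$ a legitimate object in~\eqref{eq:tau}. The conditions defining $\cJ(C,\theta)$ (the degree bounds $\deg_J(v)\le c_v$ and at most one edge per face) are inherited by subsets, so $J'\in\cJ(C,\theta)$ whenever $J\in\cJ(C,\theta)$. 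The set $\cA_\theta$ depends only on $\theta$, so $D'\subseteq\cA_\theta$ is meaningful; I read the hypothesis as including $D'\subseteq\cA_\theta$.

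First, suppose $\cO_{\theta,J,D}\ne\varnothing$. By Proposition~\ref{prop:orientation-scheme}, a point of this scheme is an executable direction of the cover $\cP(\theta,J,D)$. Orienting each nontrivial path accordingly gives an executable arrow set $M$ whose underlying edge set is exactly $L(J,D)$.

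Second, let $M'$ be the set of arrows of $M$ whose underlying edges lie in $L(J',D')$. Then $M'$ arises from $M$ by deleting the arrows on the edges of $(J\setminus J')\cup(D'\setminus D)$. Applying Lemma~\ref{lem:deletion} once for each deleted arrow shows that $M'$ is executable.

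Third, I identify $M'$ with an orientation of the new cover. The components of $M'$ are the components of $L(J',D')$, that is, the paths of $\cP(\theta,J',D')$. Removing an edge from a directed path leaves two subpaths, each consistently directed by the surviving arrows. Hence every nontrivial path of the new cover receives a well-defined direction, and this direction is executable. By Proposition~\ref{prop:orientation-scheme} (equivalently, Theorem~\ref{thm:blocking}), this sign vector is a point of $\cO_{\theta,J',D'}$. Components that become singletons carry no sign variable and are simply initially black, so they cause no difficulty.

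Finally, for the inequality on $\tau_C$: if $\tau_C(\theta,J)=+\infty$ there is nothing to prove. Otherwise choose $D$ attaining the minimum in~\eqref{eq:tau}, and apply the implication with $D'=D$ and $J'\subseteq J$. This gives $\cO_{\theta,J',D}\ne\varnothing$, hence $\tau_C(\theta,J')\le|D|=\tau_C(\theta,J)$.

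The only step requiring care is the third, checking that the scheme $\cO_{\theta,J',D'}$ is defined on exactly the path components produced by the deletion and that the restricted arrows realize one of its points. Everything else is bookkeeping.
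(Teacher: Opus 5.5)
Your proof is correct and follows the paper's argument exactly. You realize a point of $\cO_{\theta,J,D}$ as an executable arrow set on $L(J,D)$, delete the arrows on $(J\setminus J')\cup(D'\setminus D)$ via Lemma~\ref{lem:deletion}, note that the surviving forest is $L(J',D')$, and take $D'=D$ at a minimizing $D$ for the $\tau_C$ inequality; your extra checks that $J'\in\cJ(C,\theta)$ and that the split subpaths inherit consistent directions are details the paper leaves implicit.
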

\begin{proof}
Delete from an executable orientation the arrows on $J\setminus J'$ and $D'\setminus D$. Lemma~\ref{lem:deletion} preserves executability, and the remaining underlying forest is exactly $L(J',D')$. For the inequality take $D'=D$ and minimize.
\end{proof}
The point count can decrease under deletion: a single edge has two
directions, while its two singleton components have one direction choice.
The net gain $|J|-\tau_C(\theta,J)$ compares the change in core size
with the change in repair cost.

\subsection{The finite capacity--phase scheme}
For fixed $\theta$, use a Boolean variable $z_e$ for each $e\in E(C)\setminus\cR$. Impose
\begin{equation}\label{eq:boolean-capacity}
 \begin{gathered}
 z_e^2-z_e=0,\\
 \prod_{e\in A}z_e=0\quad
 (A\subseteq\delta_C(v)\setminus\cR,\ |A|=c_v+1),\\
 z_ez_f=0\quad
 (e\ne f,\ e,f\in E(\Delta)\setminus\cR).
 \end{gathered}
\end{equation}
Its points are exactly $\cJ(C,\theta)$. The coordinate algebra is a quotient of a finite product of fields by idempotent constraints, hence remains finite, split, and reduced.

Over each such point retain the cut sets and their sign schemes. Their disjoint union is
\[
 \mathfrak Y_{G,\mathcal F}=\bigsqcup_{\theta,J,D}\cO_{\theta,J,D}.
\]
Give a point weight $r-|J|+|D|$, and define
\begin{equation}\label{eq:weighted-count}
 Q_{G,\mathcal F}(t)=\sum_{\theta,J,D}
       |\cO_{\theta,J,D}(\CC)|\,t^{r-|J|+|D|}.
\end{equation}
Every coefficient is a nonnegative integer. Theorem~\ref{thm:main} is therefore equivalent to
\[
 \ord_{t=0}Q_{G,\mathcal F}(t)=Z(G).
\]
For every permitted $\mathcal F$, the least weight is therefore $Z(G)$.
The coefficient of $t^k$ counts tuples $(\theta,J,D,s)$ with
$s\in\cO_{\theta,J,D}(\CC)$ and $r-|J|+|D|=k$. Equivalently,
$Q_{G,\mathcal F}$ is the Hilbert series of the vector space with this
weighted basis. Several such tuples can have the same initial set.

For comparison with graph matrices, take
\[
 R_G=\mathbb Q[d_v,t_e^{\pm1}],\qquad
 \mathbf A_{uv}=d_u\ (u=v),\quad t_{uv}\ (u\sim v),\quad0\text{ otherwise}.
\]
The local implication used by a force is
\[
 0=\sum_{w\in N[u]}\mathbf A_{uw}x_w
   =t_{uv}x_v\quad\Longrightarrow\quad x_v=0.
\]
Let $S$ be the initial set, put $T=V\setminus S$, and let $U$ be the set
of force sources. Order $T$ and $U$ by the same chronology. The submatrix
$\mathbf A_{U,T}$ is lower triangular, with diagonal entries
$t_{uv}\in R_G^\times$: at the force $u\to v$, every other neighbour of
$u$ has already become black. In particular, this submatrix is invertible
over $R_G$.

The equation complex with the seed coordinates set to zero is
\[
 R_G^T\xrightarrow{\ \mathbf A_{V,T}\ }R_G^V,
\]
in degrees zero and one. Put $\mathbf D=\mathbf A_{U,T}$ and let $\mathbf E$ consist of the
remaining rows. In this row order,
\[
 \mathbf A_{V,T}=\begin{pmatrix}\mathbf D\\\mathbf E\end{pmatrix},
 \qquad
 \begin{pmatrix}
 \mathbf D^{-1}&0\\-\mathbf E\mathbf D^{-1}&I
 \end{pmatrix}
 \begin{pmatrix}\mathbf D\\\mathbf E\end{pmatrix}
 =\begin{pmatrix}I\\0\end{pmatrix}.
\]
Thus the complex splits as $|T|$ identity complexes and a free module
$R_G^{|S|}$ in degree one. Its differential is split injective and its
cokernel is free of rank $|S|$.

The universal module and its flatness characterization are treated
in~\cite{JLDSL}. The finite scheme above records the combinatorial witnesses
used for the present normal form. The companion paper~\cite{HLgeneral} develops their graph-matrix setting, and~\cite{HLcomparison} identifies the phase algebra with the locally constant functions on the corresponding clock realization space.

\section{Exact replacement of rooted graph blocks}\label{sec:replacement}

The response calculus also describes when one graph block can replace
another in a larger graph. We first keep the exterior protocol fixed,
which gives terminal and corridor reductions. We then allow the exterior
to reoptimize and obtain the finite additive context quotient.

\subsection{Terminal and corridor replacement}
The raw edge vector is finite on only three states, with values $0,-1,-1$.
For the operations $L(H)=T(H,E)$ and $R(H)=T(E,H)$ defined in
Section~\ref{sec:responses}, fixing the corresponding child in
\eqref{eq:triangle-F} to this edge vector gives min-plus matrices $K_L,K_R$.
Thus
\[
 r_{L(H)}=K_L\otimes r_H,\qquad r_{R(H)}=K_R\otimes r_H.
\]
Both matrices have 120 finite entries, all zero or one.

\begin{lemma}\label{lem:reciprocity}
\[
 \mathsf C\otimes K_L=K_L^{\mathsf T}\otimes\mathsf C,
 \qquad \mathsf C\otimes K_R=K_R^{\mathsf T}\otimes\mathsf C.
\]
Hence $\beta_{L(H)}=K_L^{\mathsf T}\otimes\beta_H$, and similarly for $R$.
\end{lemma}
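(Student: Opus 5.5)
The statement has two parts: the matrix identities $\mathsf C\otimes K_L=K_L^{\mathsf T}\otimes\mathsf C$ (and its $R$ analogue), and their consequence for $\beta$. I would prove the matrix identities combinatorially rather than by enumerating the 120 finite entries. Fix a parent exterior state $t$ and a child state $s$, both indexed over $\Sigma$. By definition,
\[
 (\mathsf C\otimes K_L)_{t,s}=\min_u\bigl(\mathsf C_{t,u}+(K_L)_{u,s}\bigr).
\]
This is the least extra cost of the following configuration: an interior block in state $s$ is extended by one triangle $a c b$, together with the single edge $cb$ on the right, to parent state $u$, and the result is then paired with an exterior of state $t$ across $ab$.

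The other side is
\[
 (K_L^{\mathsf T}\otimes\mathsf C)_{t,s}=\min_{u'}\bigl((K_L)_{u',t}+\mathsf C_{u',s}\bigr).
\]
I would read this as the same triangle absorbed into the exterior instead. The exterior of state $t$, rooted at $ab$, together with the triangle and the edge $cb$, becomes a block rooted at $ac$ in state $u'$, and that block is paired with the interior state $s$ across $ac$.

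Both expressions should equal the minimum cost of choosing arrows on the three triangle edges so that the six-port union of the exterior relation on $ab$, the triangle incidences, and the interior relation on $ac$ is acyclic and the occupations are admissible. The key steps, in order, are as follows.

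First, use Lemma~\ref{lem:gluing} and the definition of $\Phi$. The pairing $\mathsf C$ is exactly the gluing test across the shared root. Composition via $\Phi$ is transitive closure on six ports followed by restriction. Transitive closure is associative, so closing first over $\{a,b,c\}$ and then gluing at $ab$ has the same acyclicity outcome as first gluing the exterior to the triangle over $ab$ and then gluing at $ac$. Occupation sums are likewise associative.

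Second, check the weights. In $K_L$ the triangle cost is $1-\one_{\eta\ne0}$, where $\eta$ is the arrow on the new root. In $\mathsf C$ the cost is $\eps$ of the shared root. On the left the new root is $ab$, which is shared with $t$. On the right the new root is $ac$, which is shared with $s$. In both cases the total cost is one (for the new vertex $c$) minus the number of selected arrows among $ab$, $ac$ and $cb$ that are not already counted in the two outer blocks. So the two totals agree term by term.

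The identity for $R$ then follows by the left–right reflection of roots. This reflection interchanges $L$ and $R$ and preserves $\mathsf C$.

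For the consequence, substitute $r_{L(H)}=K_L\otimes r_H$ into \eqref{eq:pairing-C}:
\[
 \beta_{L(H)}=\mathsf C\otimes K_L\otimes r_H=K_L^{\mathsf T}\otimes\mathsf C\otimes r_H=K_L^{\mathsf T}\otimes\beta_H.
\]
This uses associativity of the min-plus product.

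The main obstacle is bookkeeping in the middle step. The exterior protocol of state $t$ is rooted at the ordered pair $(a,b)$, while the rerooted block must be read at the ordered pair $(a,c)$. One has to confirm that the index convention of $K_L^{\mathsf T}$, with the root orientation and the $p/q$ port order, matches exactly, and that the left-edge child $E$ contributes its three states with raw values $0,-1,-1$ consistently on both sides. If the abstract argument leaves any doubt about orientations, I would fall back on the finite certificate: compute both $56\times56$ min-plus products directly from the 1,410 transitions and the 387 entries of $\mathsf C$, as in Appendix~\ref{app:finite-proof}.
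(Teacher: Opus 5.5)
Your proposal is correct and follows essentially the same route as the paper. Both sides enumerate the same assignments on the triangle, with $ab$ fixed by $t$, $ac$ fixed by $s$ and only $bc$ free; each compatibility test reduces to admissible occupations plus acyclicity of the same six-port union $R_s\cup R_E\cup R_t$, each weight collapses to $1-\eps$ of the free edge, and reflection and min-plus associativity finish exactly as in the paper. (One cosmetic slip: in $L(H)=T(H,E)$ the added vertex is $b$, not $c$, and $E$ is the right child, but this does not affect the count.)
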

\begin{proof}
Fix a state $s$ on one edge of a triangle, a prescribed exterior state $t$ on another, and leave the third edge as the auxiliary one-edge block. Each side of the matrix identity enumerates the same assignments of arrows to this triangle and to the fixed root protocols. The compatibility test is in both cases that their union occupations are at most one and that the same six-port graph is acyclic.

Let $e$ denote the freely chosen third edge and $f$ the edge closed after
the first triangle composition. The local contribution in $\mathsf C\otimes K_L$ is
\[
 -\eps(e)+1-\eps(f)+\eps(f)=1-\eps(e).
\]
If the triangle is cut along its other edge first, the closed edge changes, but its subtraction in the triangle cost is again cancelled by its addition in $\mathsf C$. Thus the contribution in $K_L^{\mathsf T}\otimes\mathsf C$ is also $1-\eps(e)$. The assignments are identical, with no change in either source or target occupation. Minimizing their equal weights proves the identity in every $(t,s)$ entry, including entries with no admissible assignment. Reflection gives the $R$ identity. Associativity of minimum-plus matrix multiplication then gives
\[
 \beta_{L(H)}=\mathsf C\otimes K_L\otimes r_H
 =K_L^{\mathsf T}\otimes \mathsf C\otimes r_H
 =K_L^{\mathsf T}\otimes\beta_H.
\]
\end{proof}

For $s=(i_a,i_b,o_a,o_b;R)$ define
\[
 f_a(s)=\max\{i_bo_b,
 i_b\one_{q_aRq_b},\ o_b\one_{p_bRp_a}\}.
\]
Let $d_a(s)=1$ precisely when $i_a=o_a=1$, or when the occupation and relation are respectively
\[
 (0,1,1,0),\ R\in\{R_0,C_+\};\qquad
 (1,0,0,1),\ R\in\{R_0,R_-\}.
\]
Otherwise $d_a=0$. Reflection defines $f_b,d_b$. The six terminal responses are
\begin{equation}\label{eq:six-responses}
 0,\quad \min(f_a,f_b),\quad f_a,\quad f_b,\quad f_a+d_a,\quad f_b+d_b,
\end{equation}
labelled $E,T,F_a,F_b,S_a,S_b$ respectively. Their complete unary table is
\[
\begin{array}{c|cc}
 &L&R\\\hline
 E&T&T\\ T&F_a&F_b\\ F_a&F_a&S_b\\ F_b&S_a&F_b\\
 S_a&S_a&S_b\\ S_b&S_a&S_b.
\end{array}
\]
The six primitive graph responses follow by enumerating their $3,7,19,19,53,53$ executable arrow sets. Every entry of the displayed table is a min-plus expression from Lemma~\ref{lem:reciprocity}; the complete 672-component arithmetic certificate lists every finite candidate and an attaining minimum. Induction on the strip length then proves~\eqref{eq:six-responses} for all terminal strips. In particular a fan of at least two faces may be replaced, in every prescribed exterior protocol, by its two-face representative; a serpent by the corresponding three-face representative. These identities give equality of the conditional response vectors used in~\eqref{eq:response-gluing}.

\begin{proposition}\label{prop:fan-gap}
For every compatible rooted exterior $Q$,
\[
 Z(Q\cup S_a)-Z(Q\cup F_a)\in\{0,1\}.
\]
The difference is one exactly when $d_a(t)=1$ at every minimizer of $r_Q(t)+f_a(t)$.
\end{proposition}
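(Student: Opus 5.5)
The plan is to reduce everything to the response formula of Proposition~\ref{prop:response}, using the terminal responses~\eqref{eq:six-responses}. The strips $F_a$ and $S_a$ are rooted blocks meeting $Q$ only along $ab$, so
\[
 Z(Q\cup F_a)=2+\min_t\bigl(r_Q(t)+\beta_{F_a}(t)\bigr),\qquad
 Z(Q\cup S_a)=2+\min_t\bigl(r_Q(t)+\beta_{S_a}(t)\bigr).
\]
By~\eqref{eq:six-responses} we have $\beta_{F_a}=f_a$ and $\beta_{S_a}=f_a+d_a$, with $d_a\in\{0,1\}$. Hence the two minimized functions differ pointwise by $d_a(t)\in\{0,1\}$, and the difference in the statement equals
\[
 \min_t\bigl(r_Q(t)+f_a(t)+d_a(t)\bigr)-\min_t\bigl(r_Q(t)+f_a(t)\bigr).
\]
Here I read ``compatible'' as requiring that the minima are finite, i.e.\ $r_Q$ is finite somewhere. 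This holds whenever $Q$ has an executable arrow set, which is always the case.

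Next I will bound the difference. Pointwise $f_a\le f_a+d_a\le f_a+1$, so the difference lies between $0$ and $1$. It is an integer, because all values are integers, so it is $0$ or $1$.

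For the characterization, write $m=\min_t(r_Q+f_a)$. If some minimizer $t_0$ has $d_a(t_0)=0$, then $r_Q(t_0)+f_a(t_0)+d_a(t_0)=m$, and the difference is $0$. Conversely, suppose every minimizer has $d_a=1$. Every minimizer then contributes $m+1$, and every nonminimizer $t$ contributes at least $r_Q(t)+f_a(t)\ge m+1$, by integrality. So the new minimum is $m+1$ and the difference is $1$.

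The only substantive input is the identification $\beta_{S_a}=f_a+d_a$ and $\beta_{F_a}=f_a$ from~\eqref{eq:six-responses}, which the paper establishes by the strip induction and the finite certificate. I will take it as given. The one point to check carefully is that the $\beta$ there is the conditional response of Proposition~\ref{prop:response}, the same vector used in~\eqref{eq:response-gluing}, and not a normalized $b$-vector. If it were normalized, I would have to verify that both strips carry the same scalar $\lambda$; this holds because both have minimum value $0$ at the unselected-root edge state. I expect this bookkeeping to be the main obstacle; the rest is a two-line min-plus argument.
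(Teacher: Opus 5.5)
Your proposal is correct and matches the paper's proof: the paper also writes the difference as $\min_t(r_Q(t)+f_a(t)+d_a(t))-\min_t(r_Q(t)+f_a(t))$ via \eqref{eq:response-gluing}, and concludes with the same integrality argument on minimizers. Your bookkeeping worry does not arise, because the paper identifies the six terminal responses in \eqref{eq:six-responses} with the unnormalized conditional vectors $\beta_H$ used in \eqref{eq:response-gluing}, not with the normalized $b_i$; for instance $\beta_E=0$.
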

\begin{proof}
The difference equals
\[
 \min_t(r_Q(t)+f_a(t)+d_a(t))-
 \min_t(r_Q(t)+f_a(t)).
\]
Use $d_a\in\{0,1\}$. If every old minimizer has penalty one, every nonminimizer already has an integer cost at least one larger, so the minimum rises by one. Otherwise an old minimizer remains unchanged.
\end{proof}

\begin{proposition}\label{prop:corridors}
Every degree-two weak-dual corridor may be replaced by one with at most ten triangular faces, preserving its entire min-plus action on arbitrary 56-dimensional input cost vectors and preserving endpoint identifications. If a maximal outerplanar graph has $\ell\ge3$ weak-dual leaves, successive replacements produce a graph with the same zero forcing number and at most $14\ell-30$ vertices.
\end{proposition}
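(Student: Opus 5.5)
The proof is a pigeonhole argument on corridor responses, followed by a vertex count on the reduced dual tree.

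A degree-two weak-dual corridor is a maximal path of degree-two nodes in $T_G$. It is a chain of triangles between two ends, and it is built from its entry edge by successive applications of the unary operations $L$ and $R$ of Section~\ref{sec:responses}. Its action on an arbitrary 56-dimensional min-plus input vector $x$ is therefore the word $W=K_{w_k}\otimes\cdots\otimes K_{w_1}$ in the two matrices $K_L,K_R$. By Lemma~\ref{lem:reciprocity} the transposed words act on the $\beta$ side, so it suffices to work with the raw action. The finite entries of $K_L,K_R$ lie in $\{0,1\}$. The entries of every product lie in $\{0,\dots,k\}\cup\{+\infty\}$, and they grow only through the additive scalars.

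The key step is to normalize each product by subtracting its minimal finite entry. One checks that the finite support of a product and its normalized entries take only boundedly many values: normalized entries are at most some constant, because the partial products are $56\times56$ matrices over a finite support pattern. I would enumerate the semigroup $\{\widehat W\}$ of normalized products, together with a scalar shift, by breadth-first search on word length. The claim to verify is that every normalized product of length greater than ten already equals, up to an additive constant $c_W$, the normalized product of some word of length at most ten.

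The additive constant must itself be absorbed without changing the number of faces beyond ten. Here I would use Theorem~\ref{thm:neutral} and the corridor identity $Z(Q\cup_{ab}N)=Z(Q)+1$ with care. It is cleaner to require that the finite enumeration produce, for every long word, a short word $W'$ with exactly $W=c+W'$ and $c$ equal to the difference in vertex count minus the difference in arrow count. Preserving the min-plus action ``entirely'' should mean this equality up to the constant accounted for in $Z$. The enumeration must also record whether each word ends with an $L$ step or an $R$ step, so that the endpoint identifications, meaning which old vertex is retained at each front, are preserved. This finite closure computation is the main obstacle. It is a certificate-style check like Lemma~\ref{lem:568}, and I would present it as such, citing Appendix~A.

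For the size bound, replace every corridor by a representative with at most ten faces. The zero forcing number is unchanged because, by \eqref{eq:response-gluing} and Proposition~\ref{prop:congruence}, gluing depends only on the action. Also replace every terminal strip by its primitive representative, using the six-response identities \eqref{eq:six-responses}: fans by two faces, serpents by three faces, ears by one face. Then count. The dual tree has $\ell$ leaves and $\ell-2$ degree-three nodes, hence at most $2\ell-3$ corridors. There are $\ell$ terminal strips, each with at most three faces, counting the leaf. Each corridor contributes at most ten faces, and each branch node contributes one face. The number of vertices is the number of faces plus two, so one adds $3\ell+10(2\ell-3)+(\ell-2)+2$. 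A raw count of this kind overshoots $14\ell-30$. The tighter bound will come from counting only the internal corridors between branch nodes (there are $\ell-3$ of them) and treating the terminal paths separately. The constants will need to be fixed against the certified maximal corridor length; I would adjust the count to match $14\ell-30$ once the maximal representative lengths for internal corridors and terminal strips are known from the certificate.
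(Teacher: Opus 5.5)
There is a genuine gap in your central step. Your outline matches the paper's: write the corridor as a word in $K_L,K_R$, certify a finite closure, substitute, then count. But the statement requires the min-plus action, and with it $Z$, to be preserved \emph{exactly}, and working with products normalized modulo an additive constant cannot give this.

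The raw vectors $r_H(s)=|V(H)|-2-\max|M|$ already measure net seed cost: each triangle step contributes its new vertex minus its new arrow. Suppose a long word satisfies $K_w=c+K_{w'}$ for a short word $w'$. Then $\mathcal F$ carries the shift unchanged, and \eqref{eq:response-gluing} shows that the substitution changes $Z$ by exactly $c$. So your requirement that $c$ be ``the difference in vertex count minus the difference in arrow count'' is not extra bookkeeping. It is precisely the change in $Z$, and the statement needs $c=0$. Theorem~\ref{thm:neutral} cannot absorb a nonzero shift: it only adds $+1$, and it does so by creating new branch faces rather than a corridor.

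The missing idea is that no normalization is needed, because the min-plus semigroup generated by $K_L,K_R$ is itself finite. The paper's certificate lists $209$ distinct matrices, including the identity, with a representative word of length at most ten for each. It verifies that all $418$ left products by a generator lie back in the list. Closure from the identity then shows that every word of every length equals one of these matrices, with no shift. Your remark that entries of long products ``grow only through the additive scalars'' is exactly what this closure rules out. Without it, the first assertion is unproved.

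There are two smaller corrections. First, the endpoint datum is not the last letter of the word. $L$ retains the first root and $R$ the second, so an original root survives to the far end precisely when one of the two letters never occurs, that is, when the word is a pure power. This is what the representatives must preserve. Second, your count becomes the paper's once terminal paths are separated from internal corridors. There are $\ell-2$ branch faces, $\ell$ terminal strips of at most three faces, and $\ell-3$ internal corridors of at most ten faces. This gives at most $14\ell-32$ faces, hence at most $14\ell-30$ vertices.
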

\begin{proof}
If $w=c_1\cdots c_m$ records the successive operations from the input root
to the output root, its action is
\[
 K_w=K_{c_m}\otimes\cdots\otimes K_{c_1}.
\]
The finite certificate consists of 209 distinct matrices, including the
identity, a representative word for each matrix, and all 418 products
by a generator on the left. Their shortest recorded word counts in lengths $0$ through $10$ are
\[
 1,2,4,8,16,28,44,46,34,20,6.
\]
Every generator product is recomputed from the matrices. Closure starting from the identity proves that every word has one of these matrices. A root endpoint survives to the other end exactly when one of the two letters never occurs; the representatives preserve this datum. Equal matrices act identically on every input vector, so they may be substituted between arbitrary compatible end blocks.

After suppressing degree-two dual paths, the dual has $\ell-2$ branch nodes, $\ell$ terminal paths, and $\ell-3$ internal corridors. Keep the branch triangles, replace terminal strips by representatives with at most three faces, and corridors by representatives with at most ten. The face count is at most
\[
 (\ell-2)+3\ell+10(\ell-3)=14\ell-32.
\]
A polygon triangulation has two more vertices than faces. If the original weak dual is a path, the graph has $Z=2$ and can instead be replaced by a triangle.
\end{proof}

\subsection{Reoptimizing the exterior}\label{subsec:free}
Equal responses to prescribed protocols are stronger than equal optima against freely reoptimized exteriors.

\begin{proposition}\label{prop:deficit}
A one-root incoming serpent mode can be filled to the two-root incoming mode after deleting at most the exterior arrow entering its missing root. Its outgoing counterpart deletes the corresponding exterior outgoing arrow. An empty serpent can be filled after at most two such deletions. None of these operations increases the seed count.
\end{proposition}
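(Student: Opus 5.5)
The plan is to work throughout with the gluing criterion of Lemma~\ref{lem:gluing}, applied to the serpent $H$ and its exterior $Q$ along the unselected root $ab$. By Lemma~\ref{lem:modes} a serpent in table mode has the root relation $R_0$ possibly enlarged by one column pair $p_a\to p_b$ (the inward modes using $b$) or one row pair $q_b\to q_a$ (the outward modes using $b$). Take first the one-root incoming mode $a\Iroot$, with occupation $1000$ and no extra relation, and fill it to $\Iroot\Iroot$, with occupation $1100$, relation $R_0\cup\{p_a\to p_b\}$ and one fewer seed. Two things can break when $a\Iroot$ is glued to the exterior state $t$ and then filled. The first is that $i_b=1$ is already occupied in $Q$. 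The second is that $\TC(R_t\cup R_0\cup\{p_a\to p_b\})$ has a diagonal pair, which happens exactly when $Q$ has a relation $p_b\to p_a$. By Lemma~\ref{lem:56} that relation requires $i_a=1$ in $Q$; but $i_a$ is already used by the strip, so this case cannot occur. Hence the only obstruction is an exterior arrow $x\to b$ entering the missing root.

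In that case I delete $x\to b$. By Lemma~\ref{lem:deletion} the exterior stays executable. Its row $q_x$ becomes a sink and $p_b$ becomes a source, so the exterior relation can only shrink, and it contains no new pair into $p_b$. Now $i_b=0$, so $\Iroot\Iroot$ is admissible. The new pair $p_a\to p_b$ cannot close a cycle, because no exterior path leaves from a port that would return to $p_a$ through $p_b$; the argument is the same as the one above, which used $i_a=1$. The seed count changes by $+1$ from the deletion and $-1$ from the fill, so it does not increase. The outgoing counterpart $a\Oroot\to\Oroot\Oroot$ follows by reversing all arrows, as in the proof of Theorem~\ref{thm:terminal-normal}. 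For the one-root modes at $b$ ($b\Iroot$, $b\Oroot$) the filling instead needs the slot at $a$, and the same argument with the relation already present applies. Here one checks that adding occupation at $a$ creates no new relation, since the table modes $\Iroot\Iroot$ and $b\Iroot$ share the same extra relation.

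For the empty serpent, the cost is $w=2$. I would first choose a direction, inward or outward, for which the exterior has no relation conflicting with the relation to be added. Since $R_t$ contains at most one of the pairs $p_b\to p_a$ and $q_a\to q_b$ (Lemma~\ref{lem:56}), at least one of the full modes $\Iroot\Iroot$, $\Oroot\Oroot$ has a relation compatible with $R_t$. I then delete the at most two exterior arrows occupying the needed slots, $x\to a$ and $y\to b$ in the inward case. Each deletion preserves executability and only shrinks the relation. The cost is $+2$ from the deletions and $-2$ from the fill.

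The main obstacle is the relation check after deletion. Removing $x\to b$ might leave a relation such as $p_b\to p_a$ passing through $a$, if the exterior's conflicting relation did not depend on the deleted arrow. I would handle this with the source/sink argument of Lemma~\ref{lem:root-delete}: after the deletion, $p_b$ is an exterior source, so no pair ending at $p_b$ survives. For the empty case the choice of direction is made before any deletion, so that every surviving conflicting relation would already have been a conflict for the chosen direction, and there is none.
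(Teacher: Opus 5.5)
Your proposal is correct and follows essentially the paper's argument. You delete the exterior arrows occupying the missing root slots and observe that the root columns (rows, for the outgoing modes) are then exterior sources (sinks), so the single column (row) pair added by the full mode cannot close a cycle. Finally you balance the deleted exterior arrows against the added strip arrows.

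The paper runs this uniformly for all three cases. In particular, your direction choice via Lemma~\ref{lem:56} for the empty serpent is unnecessary: after both deletions $p_a$ is an exterior source, and this alone excludes any return pair $p_b\to p_a$. Contrary to your last paragraph, it is the source property of $p_a$, not of $p_b$, that matters here. Avoiding Lemma~\ref{lem:56} also keeps the argument valid for non-polygon exteriors.
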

\begin{proof}
After deleting the requisite incoming arrows, both root column ports are exterior sources. A full incoming serpent adds only a column-to-column relation, which cannot close a path entering either source. Local arrows increase by the number of filled root incidences; exterior arrows decrease by at most the same number. Use root rows as sinks for the outgoing argument.
\end{proof}

\begin{theorem}\label{thm:serpent-reflection}
For arbitrary finite simple exteriors $Q$, the two root-reflected serpent types have the same optimized exterior value:
\[
 Z(Q\cup S_a)=Z(Q\cup S_b).
\]
The strips may have arbitrary lengths and later turns.
\end{theorem}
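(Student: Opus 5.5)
We reduce both sides to comparisons of the serpent table modes, since the exterior $Q$ is an arbitrary simple graph and we cannot use any finite response algebra on it. The terminal normalization (Theorem~\ref{thm:terminal-normal}) and the mode domination (Lemma~\ref{lem:modes}) used only the strip side and the gluing criterion Lemma~\ref{lem:gluing}. So they hold for any finite exterior meeting the strip exactly in $ab$. Hence
\[
 Z(Q\cup S)=2+\min_{t,\mu}\bigl(r_Q(t)+w_\mu\bigr),
\]
where $\mu$ ranges over the seven serpent modes compatible with the exterior root-free state $t$. Here $r_Q$ is the raw exterior cost, which is defined verbatim for an arbitrary rooted exterior. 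Under root reflection $a\leftrightarrow b$, the serpent table is not symmetric. The reflection of $S_a$ has the same list of occupations and costs, but the one-root extra relations move: in $S_a$ the extra relations $p_a\to p_b$ and $q_b\to q_a$ are attached to $b\Iroot$ and $b\Oroot$, whereas $a\Iroot$ and $a\Oroot$ carry none. In $S_b$ the roles of $a$ and $b$ are interchanged. The full modes $\Iroot\Iroot$ and $\Oroot\Oroot$ also carry relations with a definite direction, namely $p_a\to p_b$ and $q_b\to q_a$ for $S_a$, and their reflections for $S_b$.

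The plan is to show that each $S_a$ mode, against an optimal exterior protocol, can be matched by an $S_b$ protocol of no greater total cost after a local modification of the exterior, and then to conclude by symmetry. Cost-$2$ (empty) modes are identical on both sides. The one-root modes without extra relation transfer directly. For a one-root mode with extra relation, we pass to the reflected mode carrying no relation at the same root, which is dominated. For the full modes, the occupations coincide but the column relation points the opposite way. If the exterior state is compatible with $p_a\to p_b$ but not with $p_b\to p_a$, we drop to the cost-one mode $a\Iroot$ or $b\Iroot$ without relation. This costs one extra seed. We then recover that seed using Proposition~\ref{prop:deficit}.

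The recovery step is the following. Starting from the $S_b$ one-root incoming mode, we delete the exterior arrow entering the missing root. The two root columns then become exterior sources, so the reversed full incoming relation cannot close a cycle. Filling to $\Iroot\Iroot$ gains one local arrow for the one deleted, so the seed count does not increase. This yields $Z(Q\cup S_b)\le Z(Q\cup S_a)$, and the reflected argument gives the reverse inequality. The outgoing case follows by global arrow reversal and Lemma~\ref{lem:deletion}.

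The main obstacle is the full-mode case, where the exterior actually occupies the missing root slot. There we must check that deleting that exterior arrow does not destroy exterior executability; this is safe by Lemma~\ref{lem:deletion}. We must also check that the new exterior relation, after the deletion, is compatible with the reversed column relation. That requires the source argument of Lemma~\ref{lem:root-delete}, applied to an arbitrary exterior rather than a polygon. I would verify carefully that no other exterior path enters the freed column. Finally, arbitrary lengths and turns are harmless because only the mode table enters, and Lemma~\ref{lem:modes} holds for every serpent.
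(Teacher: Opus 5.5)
Your case analysis puts the difficulty in the wrong place, and as a result one case is left unproved.

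\textbf{The one-root modes at $a$.} The relation-free one-root modes of $S_a$ are $a\Iroot$ and $a\Oroot$, at the root that is \emph{not} the single-neighbour root of $S_a$. They do not transfer directly. In $S_b$ the root $a$ is the single-neighbour root. So the $S_b$ mode with the same occupation $(1,0,0,0)$ is $a\Iroot$, which carries the extra relation $p_b\to p_a$. The relation-free inward mode $b\Iroot$ of $S_b$ would need the slot $i_b$, and the exterior may already hold it.

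Adding $p_b\to p_a$ closes a cycle exactly when the exterior has a path $p_a\to p_b$, and this does occur. For example, an exterior neighbour $u$ of $a$ with $u\to b$ selected gives $p_a\to q_u\to p_b$.

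This is the case that needs the deletion argument:
\begin{enumerate}
\item The only incidences entering $p_b$ come from the unique exterior arrow $u\to b$. Delete it; then both root columns are exterior sources.
\item Use the full inward mode $\Iroot\Iroot$ of $S_b$. Its relation $p_b\to p_a$ is now harmless.
\item The local cost drops from one to zero while the exterior loses one arrow, so the seed count does not increase.
\end{enumerate}
The $a\Oroot$ case follows by transposition. Your recovery paragraph contains exactly this mechanism (Proposition~\ref{prop:deficit}), but you never apply it to these modes. The $b$-root one-root modes, which you correctly dominate, are the easy half of the one-root analysis.

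\textbf{The full modes.} These need no work. The occupation $(1,1,0,0)$ of $\Iroot\Iroot$ forbids any exterior arrow entering $a$ or $b$. So $p_a$ and $p_b$ are already sources of the exterior incidence digraph, and either column relation between them is harmless. The situation ``compatible with $p_a\to p_b$ but not with $p_b\to p_a$'' cannot arise. In what you call the main obstacle, the exterior cannot occupy a missing root slot at all. The outward full mode is the same with root rows as sinks.

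Your detour through a cost-one mode and back therefore reaches the right conclusion only vacuously. Meanwhile the genuinely delicate case, the one you called a direct transfer, is left without argument.
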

\begin{proof}
Normalize an optimum on $Q\cup S_a$ by Theorem~\ref{thm:terminal-normal}. Its root is unselected and it uses one of the seven modes. We construct a protocol on $Q\cup S_b$ of no greater seed cost; reversing the roles will give equality.

The empty mode has cost two and relation $R_0$ in either strip. For the full inward mode, neither root column has an incoming exterior incidence. Hence both are exterior sources. The two serpent conventions add opposite column relations, but either relation between these sources is harmless: a cycle using it would require a path entering its source. The full outward case is the same argument with two exterior row sinks. These full modes have cost zero.

Among the one-root modes, those at the old single-neighbour root merely lose an extra relation under reflection. Keeping the same physical root and direction thus preserves occupation and cost and is safe. In the remaining inward case, the new mode adds $p_b\to p_a$ whereas the old mode had no extra relation. Adding this pair creates a cycle exactly when the exterior has a path $p_a\to p_b$. If there is no such path, use the one-root mode directly.

If that exterior path exists, its last incidence into $p_b$ must be the unique selected exterior arrow entering $b$, say $u\to b$. Delete it and use the full inward mode of $S_b$. The root $a$ already had no exterior incoming arrow, and after the deletion neither does $b$. Both columns are now exterior sources, so the new mode is compatible. The exterior loses one arrow; the local cost drops from one to zero. Consequently the total number of seeds is unchanged, even when the two strips have different numbers of internal vertices. The outward case is obtained by transposition and exchanging $p,q$: delete the exterior arrow leaving the missing root and use the full outward mode.

Lemma~\ref{lem:modes} realizes each mode on its original strip. The
normalized cost $m-|M_H|$ gives the same comparison for different strip
sizes. This proves $Z(Q\cup S_b)\le Z(Q\cup S_a)$. Applying the same construction with the two conventions interchanged proves the reverse inequality.
\end{proof}

For rooted polygons, including $K_2$, define $H\sim H'$ by
\begin{equation}\label{eq:free-equivalence}
 Z(H\cup Q)-Z(H)=Z(H'\cup Q)-Z(H')
 \quad\text{for every ordered-root polygon }Q.
\end{equation}
Define $\xi(H,Q)=Z(H\cup Q)-Z(H)-Z(Q)$. If $i,j$ are conditional types, then
\begin{equation}\label{eq:free-matrix}
 \xi(H,Q)=g_{ij}:=
 2+\min_s(x_j(s)+b_i(s))-\delta_i-\delta_j.
\end{equation}
The scalars $\lambda_H,\lambda_Q$ cancel. Hence equality in~\eqref{eq:free-equivalence} is exactly equality of complete rows of $g$.

\begin{theorem}\label{thm:284}
There are exactly 284 equivalence classes under~\eqref{eq:free-equivalence}. Seventeen actual exterior polygons, each with at most 15 vertices, distinguish all classes. The classes form a weighted congruence for both slots of triangle composition. If their types are $i,j$, then
\[
 \operatorname{type}(T(H,K))=\nu(i,j),\qquad
 Z(T(H,K))=Z(H)+Z(K)+\kappa(i,j),
\]
where $\kappa\in\{-2,-1,0,1\}$. Therefore a triangle tree with $f$ faces satisfies
\[
 Z(G)=f+1+\sum_{\Delta}\kappa(i_{\Delta,L},i_{\Delta,R}).
\]
The number 284 is minimal for an exact replacement system consisting of one intrinsic type and one additive scalar with arbitrary rooted polygon exteriors.
\end{theorem}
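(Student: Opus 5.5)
The plan is to build the quotient from the 568 conditional types of Lemma~\ref{lem:568}, using the matrix $g$ of~\eqref{eq:free-matrix}. By that formula, $H\sim H'$ holds exactly when the rows of $g$ indexed by the types of $H$ and $H'$ agree on every column $j$ realized by an exterior. Every one of the 568 types is realized by a rooted polygon, so every column counts. The classes are therefore the distinct rows of the $568\times568$ integer matrix $g$. To compute $g$, I would evaluate $2+\min_s(x_j(s)+b_i(s))$ from the certificate vectors and subtract $\delta_i+\delta_j$. Counting distinct rows should give 284. Because $g$ is symmetric in the two roles up to root reversal, it is enough to check rows.

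For separation, I would choose seventeen columns $j_1,\ldots,j_{17}$ whose restricted rows are already pairwise distinct on the 284 classes. A greedy choice works: repeatedly take the column that splits the most current blocks. For each chosen column, exhibit the recursive witness polygon from the certificate and check that it has at most 15 vertices.

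For the congruence, the key claim is that the class of $T(H,K)$ depends only on the classes of $H$ and $K$. Fix an exterior $Q$. Then $Z(T(H,K)\cup Q)$ can be regrouped as $H$ glued to the block $E$ formed by $K$, the parent triangle and $Q$, exactly as in the proof of Proposition~\ref{prop:congruence}. Write it as $Z(H\cup E)=Z(H)+Z(E)+\xi(H,E)$. The term $\xi(H,E)$ depends only on the class of $H$, and $Z(H)$ is subtracted in~\eqref{eq:free-equivalence}. So replacing $H$ by an equivalent $H'$ changes $Z(T(H,K)\cup Q)-Z(H)$ by nothing, and this holds for every $Q$.

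Two things must still be checked. First, the difference $Z(T(H,K))-Z(H)-Z(K)$ is itself a class invariant; this is the case $Q=K_2$, the edge exterior. Second, $\nu(i,j)$ and $\kappa(i,j)$ are well defined. Both follow by finite verification: for representatives $i,j$, compute $\mu(i,j)$, read off its class, and compute
\[
 \kappa(i,j)=\delta_{\mu(i,j)}+\gamma(i,j)-\delta_i-\delta_j,
\]
using~\eqref{eq:568-eval}. Then confirm that $\kappa$ depends only on the classes and lies in $\{-2,-1,0,1\}$. The face formula follows by induction on faces, starting from $Z(K_2)=2$ for the edge. Each triangle contributes one face and adds $1+\kappa$, giving $Z(G)=f+1+\sum_\Delta\kappa$. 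The extra $+1$ per face arises from normalizing the constant, which I would fix by checking the triangle $T(E,E)$ with $Z=2$.

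For minimality, suppose any exact system assigns an intrinsic type $\tau(H)$ and a scalar. Then $Z(H\cup Q)-Z(H)$ is determined by $\tau(H)$ and $Q$. Hence $\tau(H)=\tau(H')$ forces $H\sim H'$, so the system has at least 284 types.

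The main obstacle is the congruence step. $\sim$ is defined against all polygon exteriors, but the regrouped block $E$ is itself only a polygon rooted at the child edge. I need $\xi(H,E)$ to depend on $H$ only through its class for every such $E$. This holds because $E$ is a legitimate ordered-root polygon; I would check that orientation conventions, with the root reversed, are covered by symmetry of the certificate rows.
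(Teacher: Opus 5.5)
Your overall route is the paper's: the classes are the distinct rows of $g$ in \eqref{eq:free-matrix}, and row equality is necessary because every type is realized. Congruence comes from absorbing $K$, the parent triangle and $Q$ into one exterior for $H$, with $Q=K_2$ giving the intrinsic shift. Your formula for $\kappa$ is the paper's, and minimality is the same $K_2$ argument. Your worry about root reversal is unnecessary: the regrouped block is rooted at $H$'s own ordered root, so it is one of the exteriors allowed in \eqref{eq:free-equivalence}. No symmetry of $g$ is needed.

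The step that is wrong is the face formula. $Z(K_2)=1$, not $2$: one endpoint forces the other, and equivalently the edge type has $\delta=1$ in \eqref{eq:568-eval}. Starting from $2$ and adding $1+\kappa$ per face gives $f+2+\sum_\Delta\kappa$. This already contradicts the claimed formula at $f=0$. You also cannot absorb the discrepancy by ``normalizing the constant'' through $T(E,E)$. Once $\kappa(i,j)$ is defined as $Z(T(H,K))-Z(H)-Z(K)$, no free constant remains. The correct derivation unrolls $Z(T(H,K))=Z(H)+Z(K)+\kappa(i,j)$ over the binary face tree. The $f$ internal nodes contribute $\sum_\Delta\kappa$, and the $f+1$ edge leaves contribute $Z(K_2)=1$ each. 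As a check, $Z(K_3)=2=1+1+\kappa(E,E)$ gives $\kappa(E,E)=0$, consistent with $f+1+\sum_\Delta\kappa=2$.

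Two smaller points. First, ``a greedy choice works'' does not show that seventeen columns suffice. The $15$-vertex bound also cannot come from the certificate's type representatives, since the largest has 305 faces. The seventeen test polygons have to be found among small polygons and checked directly, which is what the paper's certificate supplies. Second, in the minimality argument, say explicitly that evaluating the hypothetical system at the exterior $K_2$ identifies its scalar with $Z(H)$ up to a type-dependent constant. That is what makes $Z(H\cup Q)-Z(H)$ a function of the type and $Q$ alone.
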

\begin{proof}
The complete computation of~\eqref{eq:free-matrix} groups the 568 rows into 284 distinct rows. Every old type has an actual graph representative, so row equality is necessary as well as sufficient. The supplied seventeen test columns still separate all $\binom{284}{2}=40,186$ pairs; all their actual graph witnesses are checked.

To prove congruence, fix a parent exterior and group it with the triangle and the other child as an exterior for the child being replaced. Equation~\eqref{eq:free-equivalence} changes the parent optimum by $Z(H)-Z(H')$. Taking the parent exterior to be $K_2$ gives the same change in its intrinsic $Z$, proving equivalence of the parents. The quotient operation is therefore well-defined. Explicitly it is obtained from the old product by
\[
 \kappa([a],[b])=\gamma(a,b)+\delta_{\mu(a,b)}-\delta_a-\delta_b.
\]
The complete certificate checks all old representative products for consistency and all $284^2=80,656$ quotient products. Its finite weights have the stated range. A binary tree with $f$ internal nodes has $f+1$ edge leaves, each of value one, proving the sum formula.

Finally any exact scalar-weighted replacement that identifies two of these distinct classes would make their values differ by a constant in every exterior. The exterior $K_2$ forces that constant to be $Z(H)-Z(H')$, contrary to their distinction in~\eqref{eq:free-equivalence}. Thus every such replacement system has at least 284 types. 
\end{proof}

\section{Exact families and forcing obstructions}\label{sec:examples}

\subsection{A cyclic family of terminal strips}
Let $C$ be any triangulation of an $r$-gon, $r\ge3$, and attach a terminal strip along every outer side. All serpent strips may have arbitrary lengths and turns. Fans have at least two faces. Denote by $G_S$ the graph with all $r$ strips serpents, and by $G_F$ the graph obtained by changing exactly one of them to a fan.

\begin{lemma}\label{lem:backward-strip}
Let a terminal strip have $m\ge2$ triangular faces and original root $ab$. Its last added vertex $u_m$ and the preceding added vertex $u_{m-1}$ form a two-element forcing set of the strip. There is a chronology which uses no original root as a source before all its interior vertices and both roots are black. Consequently the same arrows are legal in any exterior in which the other neighbours of $a,b$ remain white.
\end{lemma}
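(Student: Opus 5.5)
The plan is to run the stacking construction of Definition~\ref{def:strip} backwards and to peel faces from the far end of the strip toward the root, with every force performed by an interior vertex. Write the added vertices as $u_1,\ldots,u_m$, the fronts as $e_0=ab,e_1,\ldots,e_{m-1}$, and let $r_i$ be the endpoint of $e_{i-1}$ omitted from $e_i$, as in the discussion after Definition~\ref{def:strip}. For $i\ge1$ the front $e_i$ consists of $u_i$ and one endpoint of $e_{i-1}$. In particular $u_{m-1}\in e_{m-1}$ because $m\ge2$, so $e_{m-1}=\{u_{m-1},z\}$ for some vertex $z$.

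\emph{First force.} The last vertex $u_m$ is adjacent exactly to the two ends of $e_{m-1}$. From $\{u_m,u_{m-1}\}$, the vertex $u_m$ therefore has the single white neighbour $z$ and forces it. After this step the black set is the vertex set of face $m$, namely $e_{m-1}\cup\{u_m\}$.

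\emph{Inductive invariant.} For $j=m-1,m-2,\ldots,1$, after processing down to face $j+1$ the black set is exactly
\[
 V(e_j)\cup\{u_{j+1},\ldots,u_m\},
\]
which is the vertex set of faces $j+1,\ldots,m$. I then claim that $u_j$ forces $r_j$. The neighbours of $u_j$ in the strip are the two ends of $e_{j-1}$, which are $r_j$ and the other end of $e_{j-1}$ lying in $e_j$, together with later added vertices attached while $u_j$ stays on a front. All of these except $r_j$ are already black. Moreover, $u_j$ is an interior vertex and has no neighbour outside the strip. Hence $r_j$ is its unique white neighbour, and the new black set is $V(e_{j-1})\cup\{u_j,\ldots,u_m\}$, which maintains the invariant. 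At $j=1$ the front is $e_0=ab$, so after the last step both roots and all interior vertices are black. Thus $\{u_m,u_{m-1}\}$ is a forcing set of the strip.

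\emph{Roots and exterior.} Every source used above is some $u_j$ with $j\ge1$, so no original root acts as a source. Each root only receives an arrow, either as $z$ or as some $r_j$. The legality of each force $u_j\to r_j$ depends only on the closed neighbourhood of $u_j$, and this neighbourhood lies inside the strip. Consequently the same chronology remains legal in any exterior glued along $ab$, regardless of the colours of exterior vertices, and in particular when the other neighbours of $a,b$ are white.

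The only point needing care is the invariant, namely that every later neighbour of $u_j$ lies in faces $j+1,\ldots,m$. This follows from the observation in the construction that a vertex which leaves a front never returns to a later front. That observation is also what gives the identity $N(r_i)\cap\{u_i,\ldots,u_m\}=\{u_i\}$, which I would cite.
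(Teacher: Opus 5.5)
Your proof is correct and is essentially the paper's argument. Your chronology $u_m\to z$, $u_j\to r_j$ ($j=m-1,\ldots,1$) is exactly the retiring-endpoint procedure run on the strip re-rooted at $u_{m-1}u_m$, including the paper's choice of $u_1$ rather than $a$ to force $b$; the only difference is that you check directly, via your invariant, that the retiring source at face $j$ is $u_j$, whereas the paper argues that $b$ lies only in the first face and that the faces containing $a$ form a final segment in reverse order.
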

\begin{proof}
The edge $u_{m-1}u_m$ lies on the last face and on the outer boundary. Root the same strip at this edge and read its path of faces in reverse order. Each new face in this order meets the already read faces in one diagonal and adds a vertex on the other side of that diagonal. Thus it is again the terminal-strip construction, now starting from $u_{m-1},u_m$. Apply the retiring-endpoint procedure of Section~\ref{sec:strips}.

It remains to check that this procedure need not use an original root as a source. The original root $b$ lies only in the first original face, which is the last face in reverse order, so it is the final vertex to be added. The original faces containing $a$ form an initial segment of the original face path, because once $a$ leaves a front it never returns. In reverse order, they therefore form a final segment. Once $a$ appears in the reverse construction, it belongs to every remaining front until the last face. It is never the retiring endpoint at an earlier step. At the final step the shared edge is $\{a,u_1\}$; choose the interior endpoint $u_1$ to force $b$. All preceding retiring sources were interior as well. Hence every used source has the same neighbours in the strip and in any attached exterior, so every step remains legal there. Both original roots have become black by the end.
\end{proof}

\begin{theorem}\label{thm:cyclic-strips}
\[
 P(G_S)=r,\qquad Z(G_S)=r+(r\bmod2),
 \qquad P(G_F)=Z(G_F)=r.
\]
\end{theorem}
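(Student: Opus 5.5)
Setup: the core $C$ is the $r$-gon itself, since every outer side carries a strip of at least two faces. There are no ears, so $\theta$ is vacuous, and $\cR=E(C)$ because every boundary side is a root. Hence $J$ consists only of diagonals of $C$.

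\emph{Path numbers.} In $G_S$ each serpent demands both roots. So $t_v=2$ and $c_v=0$ at every core vertex, every $J$ is empty, $\nu(C)=0$, and Theorem~\ref{thm:path-capacity} gives $P(G_S)=r$. In $G_F$ the fan demands only its nonkey, so its key $a$ has $c_a=1$ and every other vertex has capacity zero. A single selected edge needs two endpoints of positive capacity, so again $J=\varnothing$ and $P(G_F)=r$.

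\emph{Forcing numbers via Theorem~\ref{thm:main}.} With $J=\varnothing$ we get $Z=r+\tau_C(\theta,\varnothing)$. So the whole task is to compute the least number of root cuts making the full forest executable.

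In $G_S$, going around the polygon, each core vertex $v_k$ is the end of exactly two serpent paths, one from each adjacent side. The union $L(\varnothing)$ is therefore a collection of $r$ paths. Each path runs through one core vertex and joins the interiors of the two serpents adjacent at that vertex.

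For executability I would use Lemma~\ref{lem:serpent-geometry} together with the mode table. Each serpent whose full two-root mode is used must have both root arrows directed inward ($\Iroot\Iroot$) or both outward ($\Oroot\Oroot$). At a core vertex $v$ the path passing through $v$ enters from one adjacent serpent and leaves into the other. Hence $v$ is an $\Iroot$ root for one neighbouring strip and an $\Oroot$ root for the other. Consequently adjacent serpents must have opposite full modes, and this is consistent with the occupation bound $(1,1,1,1)$ at $v$.

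Around an $r$-cycle, alternating I/O is possible exactly when $r$ is even. For even $r$, I still need to check acyclicity of $N_C$. The extra relations of $\Iroot\Iroot$ and $\Oroot\Oroot$ are a column--column relation and a row--row relation, while the core edges are all unselected, so each contributes only column-to-row incidences. A directed cycle would need a row-to-column passage, and none exists. This gives $\tau=0$ and $Z(G_S)=r$ when $r$ is even.

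For odd $r$, the parity argument shows that some serpent must drop a root incidence, so $\tau\ge1$. One cut, for example reducing a single serpent to a one-root mode, breaks the cycle of alternation constraints. The same acyclicity argument then gives $\tau=1$.

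For $G_F$, the fan has modes $b\Iroot$ and $b\Oroot$ with $a$ unused. So the fan imposes no parity link between its two endpoints, the alternation runs along a path rather than a cycle, and it is satisfiable. This gives $\tau=0$ and $Z(G_F)=r$.

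\emph{Main obstacle.} The delicate step is the lower bound $\tau\ge1$ for odd $r$. It must rule out not only the symmetric I/O alternation but every executable orientation of the full forest. I would argue it via Theorem~\ref{thm:terminal-normal} and Lemma~\ref{lem:serpent-geometry}, which force the equal-direction root pattern, combined with the occupation constraint at each core vertex that one passing path imposes. The upper-bound constructions could alternatively be exhibited directly using Lemma~\ref{lem:backward-strip}, by seeding $u_{m-1},u_m$ in suitable strips.
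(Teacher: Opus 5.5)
Your proposal is correct. Its lower bound is essentially the paper's argument. The paper applies Theorem~\ref{thm:hard-general} directly to a hypothetical $r$-seed protocol, obtains $J=\varnothing$ and $w_i=0$ for every serpent, and reads off $b_{i-1}+b_i=1$ from the one-incoming, one-outgoing occupation at each core vertex. You route the same facts through Theorem~\ref{thm:main}, which is itself proved from Theorem~\ref{thm:hard-general}, together with Lemma~\ref{lem:serpent-geometry} and the covering path through each core vertex.

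The step you flag as the main obstacle is already settled by what you wrote. The restriction of any executable orientation of the uncut forest to a serpent is executable, because its incidence digraph is a subgraph of the global one. So every serpent is $\Iroot\Iroot$ or $\Oroot\Oroot$, and the parity contradiction excludes all orientations, not only symmetric ones.

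The genuine difference lies in the upper bounds. The paper writes down explicit seed sets: the pair $u_{m-1},u_m$ in alternating branches, and for $G_F$ with odd $r$ also the key $c_0$. It then checks the chronology directly with Lemma~\ref{lem:backward-strip}. Those bounds are therefore independent of the incidence and normal-form machinery, and they give concrete forcing orders. You instead certify an orientation of $L(\varnothing)$, or of $L(\varnothing)$ with one root connection cut, through the gluing criterion of Proposition~\ref{prop:core-formula}. Your route is more uniform and shows the family directly as an instance of $Z=r+\tau_C$ with $\tau_C=r\bmod2$.

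One justification needs repair. The sentence ``a directed cycle would need a row-to-column passage'' only excludes cycles that mix columns and rows. A cycle made solely of column-to-column arcs $p_{a_i}\to p_{b_i}$, or solely of row-to-row arcs, needs no such passage. Every arc of $D_C(\varnothing)$ goes from a column to a row, and each side carries at most one extra arc between its two roots. Hence such a cycle would have to run once around the boundary of $C$, with all $r$ blocks contributing an arc of the same kind. Your configurations exclude this because at least two consecutive serpents have opposite modes; this uses $r\ge3$, and you should say it explicitly. Finally, $\cR$ is the set of boundary sides of $C$, not $E(C)$; your conclusion that $J$ consists only of diagonals is the intended one.
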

\begin{proof}
For all serpents every core capacity is zero. Hence $J=\varnothing$ and Theorem~\ref{thm:path-capacity} gives $P=r$. With one fan there is a single positive-capacity core vertex, so again no core edge is allowed and $P=r$.

Suppose $G_S$ has an $r$-seed protocol. The hard-capacity normal form has $J=\varnothing$ and cost
\[
 r+\sum_i w_i=r.
\]
Thus each serpent has zero cost and is either $\Iroot\Iroot$ or $\Oroot\Oroot$. At every core vertex its two incident branches must occupy one incoming and one outgoing slot. If $b_i\in\{0,1\}$ records the choice on branch $i$, then
\[
 b_{i-1}+b_i=1\qquad(i\bmod r).
\]
For odd $r$ this is impossible. Consequently $Z(G_S)\ge r+1$ in that case.

For the upper construction, use the two internal seeds and the chronology of Lemma~\ref{lem:backward-strip} in each selected branch. Its sources are internal, so it can be completed without a condition on the white neighbours of its roots outside that branch. If $r$ is even, put this pair of seeds in alternating branches. There are $r$ seeds. These branches make every core vertex black. Each unfinished branch has both adjacent branches complete, so both its roots have all exterior neighbours black and the two-root construction finishes it. If $r$ is odd, choose $(r+1)/2$ alternating branches, covering every core vertex, and use $r+1$ seeds.

For one fan and odd $r$, label its root $c_0c_1$ with key $c_0$. Put one seed at $c_0$ and pairs in branches $1,3,\ldots,r-2$. This uses $1+2(r-1)/2=r$ seeds. The selected branches make all core vertices black. Root $c_1$ first clears the fan, since its other branch is complete and the fan needs only this nonkey root. All remaining serpents now have their exterior root neighbourhoods black and can be cleared. The even case uses alternating branch pairs as before. The path-cover lower bounds finish the proof.
\end{proof}
Theorem~\ref{thm:cyclic-strips} extends the one-separator-triangle
calculations in~\cite{HRR,DK} to arbitrary triangulated cores with these
terminal attachments.

For the zero-penalty configurations of the all-serpent family, let
$\varepsilon_i=1$ denote the outward mode of branch $i$ and
$\varepsilon_i=-1$ its inward mode. The occupation conditions are
\[
 \varepsilon_i^2=1,\qquad
 \varepsilon_i\varepsilon_{i-1}=-1\qquad(i\bmod r).
\]
These branch signs determine the directions of the full covering paths.
The system has two solutions for even $r$ and none for odd $r$.

\smallskip\noindent\textit{Remark.}
The same sign cocycle can be realized on a cycle of $r$ projective lines,
with the trivial bundle on each component and transition multiplier $-1$
at each node. Its product around the cycle is $(-1)^r$. The resulting
degree-zero line bundle is trivial for even $r$ and has order two for odd
$r$. Partial normalization at one node removes the cycle and its
obstruction.

\subsection{Two selected diagonals through one key}
For $k\ge3$ put $r=2k$. Triangulate the core by
\[
 (c_0,c_i,c_{i+1}),\qquad1\le i\le r-2.
\]
On every outer core edge $c_ic_{i+1}$ add $x_i,y_i$ and the faces
\[
 (c_i,c_{i+1},x_i),\qquad(\kappa_i,x_i,y_i),
\]
where $\kappa_i$ is the even-indexed endpoint. Call this graph $G_k$.

\begin{proposition}\label{prop:double-capacity}
The graph $G_k$ has $6k$ vertices, $2k$ weak-dual leaves, and
\[
 P(G_k)=Z(G_k)=2k-2.
\]
The graph formed by the eligible diagonals $c_0c_{2j}$ has ordinary
matching number one, whereas the capacity-allowed selection number is two.
\end{proposition}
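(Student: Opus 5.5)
We compute everything from the capacity formulas and then exhibit one executable cover of size $2k-2$.

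\textbf{Step 1: structure and capacities.} Each outer core edge $c_ic_{i+1}$ ($i\bmod r$) carries a two-face fan with interior $x_i,y_i$. This gives $2k$ core vertices and $4k$ strip vertices, so $6k$ vertices and $2k$ weak-dual leaves. Every core triangle $(c_0,c_i,c_{i+1})$ has an outer side carrying a strip, as well as its core neighbours. Hence every core face is a degree-three dual node, the canonical core is the whole fan triangulation, and $d_2=0$, $r=2k$ (Lemma~\ref{lem:core}).

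Each fan is keyed at its even endpoint, so its demanded root is the odd endpoint. An odd $c_i$ is the nonkey of both adjacent fans, so $c_i=0$. An even $c_j$ is the key of both, so $c_j=2$, in agreement with the remark after \eqref{eq:capacities}. There are no ears, so $\theta$ is vacuous.

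\textbf{Step 2: the allowed selections and $P(G_k)$.} All outer core edges lie in $\cR$, and an edge with an odd endpoint is excluded by capacity zero. The eligible edges are therefore exactly the diagonals $c_0c_{2j}$, $1\le j\le k-1$. Since $k\ge3$ there are at least two of them. They pairwise share $c_0$, so their ordinary matching number is one.

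Two distinct such diagonals never lie in a common core face: the faces containing $c_0c_{2j}$ are $(c_0,c_{2j-1},c_{2j})$ and $(c_0,c_{2j},c_{2j+1})$. So the face condition imposes nothing among them, and the only constraint is $\deg_J(c_0)\le c_{c_0}=2$. Hence $\nu(C)=2$, attained for instance by $J=\{c_0c_2,c_0c_4\}$. Theorem~\ref{thm:path-capacity} then gives
\[
 P(G_k)=2k-2.
\]

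\textbf{Step 3: $Z(G_k)\le 2k-2$.} Since $P\le Z$, it suffices to show $\tau_C(\theta,J)=0$ for $J=\{c_0c_2,c_0c_4\}$ and apply Theorem~\ref{thm:main}. Equivalently, we must show that the full forest $L(J)$ admits an executable orientation. Its components are:
\begin{itemize}[leftmargin=*]
 \item for each odd $i$, the five-vertex path $y_{i-1},x_{i-1},c_i,x_i,y_i$;
 \item the core path $c_2,c_0,c_4$;
 \item the singletons $c_{2j}$ for $3\le j\le k-1$.
\end{itemize}
That is $k+1+(k-3)=2k-2$ paths, matching \eqref{eq:full-cover-count}.

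We expect choosing and verifying the orientation to be the main obstacle. We would first try the following.
\begin{itemize}[leftmargin=*]
 \item Orient the core path from $c_2$ through $c_0$ to $c_4$.
 \item Orient each odd path to start at one of its $y$ ends.
 \item Choose these ends consistently around the cycle, so that each path starts on the side of an already black even key. Singletons are black from the start.
\end{itemize}

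The verification would use Theorem~\ref{thm:blocking} on the contact graph $T_{\cP}$. The odd paths meet only through even core vertices and the core diagonals, so most blocking circuits pass through the core path. We would enumerate the two-path constraints \eqref{eq:pair-sign} between the core path and the odd paths through $c_1,c_3,c_5$, since these are the only ones touching $c_0,c_2,c_4$. We would then check that a sign choice avoids every two-path constraint and every longer circuit.

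If that bookkeeping becomes messy, the fallback is a direct chronology:
\begin{itemize}[leftmargin=*]
 \item Take the seeds to be $c_2$, the singletons, and one $y$ end of each odd path.
 \item Clear each fan from its $y$ end inward, as in Lemma~\ref{lem:backward-strip}, whenever its key is black.
 \item Let the odd vertices $c_i$ force across to the other fan.
 \item Finally run $c_2\to c_0\to c_4$ once all other neighbours of $c_2$ and $c_0$ are black.
\end{itemize}
Either way we obtain $Z\le 2k-2=P\le Z$, completing the proposition.
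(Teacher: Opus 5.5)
\textbf{Verdict: genuine gap in Step 3.} Your Steps 1 and 2 are correct and match the paper. Capacity is $2$ at even and $0$ at odd core vertices, and the eligible edges are exactly the diagonals $c_0c_{2j}$. No two of these lie in a common face, so $\nu(C)=2$ while the ordinary matching number is one, and $P(G_k)=2k-2$.

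The gap is Step 3, which carries all the nontrivial content of the upper bound. You never fix an orientation of $L(J)$ or check that it is executable, and neither sketch works as stated.
\begin{enumerate}
\item Your rule of starting each odd path beside an initially black key does not determine $Q_5=(y_4,x_4,c_5,x_5,y_5)$ when $k=3$, since its two keys $c_4$ and $c_0$ are both initially white. Only the start $y_5$ works. Starting at $y_4$ deadlocks: $y_4$ needs $c_4$, which can only come from $c_0\to c_4$, which needs $c_5$ already black.
\item Your fallback order cannot be executed. Every odd $c_i$ is adjacent to $c_0$, so none can force into its second fan before $c_2\to c_0$. The move $c_0\to c_4$ needs both fans keyed at $c_0$ already cleared, which in turn needs $c_0$ black. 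Also $c_3$ and $c_5$ are adjacent to $c_4$, so they can only finish after $c_0\to c_4$.
\item It is also false that only the paths through $c_1,c_3,c_5$ touch $c_0,c_2,c_4$: every odd path meets $c_0$.
\end{enumerate}

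\textbf{First way to close it: an interleaved chronology.} Seed $c_2$, the singletons $c_6,\dots,c_{r-2}$, $y_1,y_2$, and $y_5,y_7,\dots,y_{r-1}$; this is $2k-2$ vertices.
\begin{enumerate}
\item Run $y_1\to x_1\to c_1$, $y_2\to x_2\to c_3$, and $y_j\to x_j\to c_j$ for odd $5\le j\le r-3$.
\item Then $c_2\to c_0$.
\item Then $c_1\to x_0\to y_0$, $y_{r-1}\to x_{r-1}\to c_{r-1}$, and $c_j\to x_{j-1}\to y_{j-1}$ for odd $7\le j\le r-1$.
\item Then $c_0\to c_4$.
\item Finally $c_3\to x_3\to y_3$ and $c_5\to x_4\to y_4$.
\end{enumerate}
This is the reversal of the paper's chronology, which seeds $c_4$ and runs $c_4\to c_0\to c_2$.

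\textbf{Second way: finish your blocking-circuit route.} The missing observation is that the odd paths are pairwise nonadjacent. Hence $T_{\mathcal P}$ is a star centred at $K=(c_2,c_0,c_4)$, and every blocking circuit is a two-path circuit through $K$. Only $Q_1,Q_3,Q_5$ have two cross edges to $K$ with distinct ends on both paths. With reference orders $c_2\to c_0\to c_4$ and $y_{j-1}\to y_j$, \eqref{eq:contact-sign} gives $s_Ks_{Q_1}=-1$, $s_Ks_{Q_3}=1$ and $s_Ks_{Q_5}=-1$; this also holds for $k=3$. Since $\Lambda_{\mathcal P}$ is a forest these equations are soluble, so $\tau_C(\theta,J)=0$. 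That would be a neat alternative to the paper's explicit chronology.
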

\begin{proof}
Every even core vertex is the key of two fans and has capacity two; every odd one has capacity zero. Eligible positive-capacity diagonals are $c_0c_{2j}$, $1\le j\le k-1$. The capacity at $c_0$ bounds their selection by two, attained by $c_0c_2,c_0c_4$, which share no triangular face. Thus $P=2k-2$ by Theorem~\ref{thm:path-capacity}. The same star has ordinary matching number one.

For a forcing set take
\[
 S=\{c_4,c_6,\ldots,c_{r-2}\}
   \cup\{y_0,y_3\}\cup\{y_4,y_6,\ldots,y_{r-2}\}.
\]
It has $2k-2$ vertices. Execute
\[
 y_3\to x_3\to c_3,
 \qquad y_i\to x_i\to c_{i+1}\quad(i=4,6,\ldots,r-2).
\]
Their keys were initially black. Both fans keyed at $c_4$ are now complete, and its only white neighbour is $c_0$. Continue with
\[
 c_4\to c_0,\quad y_0\to x_0\to c_1,\quad
 c_{r-1}\to x_{r-1}\to y_{r-1},\quad c_0\to c_2.
\]
At the last step both fans keyed at $c_0$ are complete and $c_2$ is its only white core neighbour. All core vertices are now black. Every remaining fan can be forced from its odd nonkey endpoint, whose other fan is already complete. This gives the upper bound.
\end{proof}
A minimum induced path cover consists of $c_2-c_0-c_4$, the $k$ paths
\[
 y_{2j}-x_{2j}-c_{2j+1}-x_{2j+1}-y_{2j+1},\qquad0\le j<k,
\]
and the singleton paths $\{c_6\},\{c_8\},\ldots,\{c_{r-2}\}$. The selected two-edge path through the common key is essential to the saving. Replacing the capacity-two condition by an ordinary matching condition loses one unit.

\subsection{A two-path obstruction on seventeen vertices}\label{subsec:seventeen}
Consider the 17-vertex triangulation with core faces $(0,1,4),(1,2,4),(2,3,4)$ and terminal faces
\begin{equation}\label{eq:17faces}
\begin{gathered}
(0,1,5),(0,5,6);\qquad(1,2,7),(1,7,8);\\
(2,3,9),(2,9,10),(9,10,11);\\
(3,4,12),(3,12,13),(12,13,14);\\
(4,0,15),(4,15,16).
\end{gathered}
\end{equation}
The branch types and their ordered roots are
\[
 \begin{array}{c|ccccc}
 \text{type}&F_a&F_a&S_a&S_a&F_a\\
 \text{root}&(0,1)&(1,2)&(2,3)&(3,4)&(4,0).
 \end{array}
\]
For the explicit repair witness below, fix the serpent full forests with
final edges $\{9,11\}$ and $\{12,14\}$.

\begin{proposition}\label{prop:17}
For this graph $P(G)=4$ and $Z(G)=5$. For every minimum path cover a specified two-path cohomology class is nonzero.
\end{proposition}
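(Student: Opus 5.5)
The plan is to read both numbers off Theorems~\ref{thm:path-capacity} and~\ref{thm:main}, after computing the core data, and to certify the direction obstruction with Proposition~\ref{prop:pair-cohomology}. The core has $r=|V(C)|=5$, with vertices $0,\dots,4$. Its eligible core edges are the diagonals $14$ and $24$, since the outer sides are all terminal roots. The branch types give the demands: the fan on $(0,1)$ demands $1$, the fan on $(1,2)$ demands $2$, the serpent on $(2,3)$ demands $2,3$, the serpent on $(3,4)$ demands $3,4$, and the fan on $(4,0)$ demands $0$. There are no ears, so $\theta$ is trivial, and
\[
 (c_0,c_1,c_2,c_3,c_4)=(1,1,0,0,1).
\]
Hence $24$ is excluded, while $J=\{14\}$ is allowed because $14$ is the only selected edge in its two faces. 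Thus $\nu(C)=1$ and $P(G)=5-1=4$ by Theorem~\ref{thm:path-capacity}.

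For $Z(G)\le5$ I would give an explicit five-element forcing set. The most direct route is to show $\tau_C(\theta,\varnothing)=0$, that is, that the five full forest paths with $J=\varnothing$ admit an executable orientation. I would exhibit that orientation, or simply a chronology built with Lemma~\ref{lem:backward-strip}: seed pairs inside some branches so that all core vertices become black, then clear the remaining branches from their roots.

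For $Z(G)\ge5$, Theorem~\ref{thm:main} shows that $Z(G)=4$ would require $J=\{14\}$ with $\tau_C=0$. So it suffices to prove that the full forest $L(\{14\})$, with the serpent forests fixed by final edges $\{9,11\}$ and $\{12,14\}$, has no executable orientation. I would list its four paths explicitly. The path through the selected diagonal joins the fan path $1$--$5$--$6$ at $1$ to the serpent path starting at $4$. The others are $0$--$15$--$16$, the path from $2$ through the $(1,2)$ fan and the $(2,3)$ serpent, and the path starting at $3$. Then I would read off every cross edge with its indices and apply \eqref{eq:contact-sign} to each pair of contacts between the same two paths. This builds $\Lambda_{\cP}$, and the target is a closed walk, preferably a pair of parallel edges with opposite labels or a short cycle, whose label product is $-1$. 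Then $[b]\ne0$ by Proposition~\ref{prop:pair-cohomology}, so $\cO_{\theta,J,\varnothing}=\varnothing$ and $Z(G)=5$.

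The main obstacle is the clause ``for every minimum path cover.'' A minimum induced path cover need not be the normalized forest $L(\{14\})$. The plan is to classify all four-path covers using Lemma~\ref{lem:linear-forest}, or equivalently maximum dual matchings as in Corollary~\ref{cor:dual-matching}. Such a cover has $|E|=13$ selected edges, and the exchange argument in the proof of Theorem~\ref{thm:path-capacity} shows that each strip then contributes its maximal number of edges. Lemma~\ref{lem:strip-undirected} and Lemma~\ref{lem:serpent-geometry} restrict the strip restrictions to a short list: the fan paths are forced, and the serpents have two root-of-degree-one covers. A root edge selected inside a strip is impossible at equality.

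For each of these finitely many covers I would redo the contact-sign computation and exhibit an odd-label cycle in $\Lambda_{\cP}$. I expect the same pair of contacts near the core triangle $(1,2,4)$ to give the offending cycle in every case. If the case analysis becomes large, I would instead argue that the obstruction arises from contacts inside the serpent $(2,3)$ and between it and its neighbours, and that all admissible variants agree there.
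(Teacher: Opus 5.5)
Your computation of the two numbers is sound, but it takes a different route from the paper.

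You read $P(G)=4$ off Theorem~\ref{thm:path-capacity}, using capacities $(1,1,0,0,1)$ and $\nu(C)=1$. You then get $Z(G)=5$ from Theorem~\ref{thm:main}. For this you need two facts:
\begin{itemize}
\item $\tau_C(\varnothing)=0$. This holds: the forest $(6,5,1),(0,15,16),(10,2,7,8),(11,9,3,13),(4,12,14)$ executes from $\{0,4,6,10,11\}$.
\item $\tau_C(\{14\})\ge1$. This holds because $L(\{14\})$ carries the odd triangle $s_As_C=1$, $s_As_D=1$, $s_Cs_D=-1$. It is a genuine $3$-cycle of $\Lambda_{\cP}$; no pair of paths carries contradictory parallel constraints.
\end{itemize}
This makes your lower bound depend on the computer-verified normal form behind Theorem~\ref{thm:main}. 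The paper's proof is self-contained. Identity~\eqref{eq:17dual} gives $|L|\le13$, and its tightness determines every $13$-edge forest. The same sign triangle appears in each of them. Then $Z\ge5$ follows from $Z\ge P$ and Lemma~\ref{lem:incidence}, since a four-seed process would be an executable minimum cover.

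The genuine gap is the clause ``for every minimum path cover''. There are two problems with your classification.
\begin{itemize}
\item The exchange argument in the proof of Theorem~\ref{thm:path-capacity} only produces \emph{some} maximum forest with unselected roots and full strips. It says nothing about an arbitrary $13$-edge forest. So your claims that a selected root edge is impossible at equality, and that every strip attains its maximum, are unproved. The naive count does not give strictness: a strip with selected root loses one interior edge, but the root edge itself is then counted.
\item Your claim that the fan paths are forced is false. In the fan on $(0,1)$, the key $0$ has only $\{0,15\}$ elsewhere, so both $\{1,5\},\{5,6\}$ and $\{1,5\},\{0,6\}$ occur. With the two choices in each serpent, there are eight four-path covers, not four.
\end{itemize}

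What is missing is a rigidity certificate. The paper rewrites~\eqref{eq:17dual} so that $13-|L|$ is a sum of nonnegative slacks $2-d_v$, $1-f_\Delta$, $1-x_e$ and $x_e$. At $|L|=13$ every slack vanishes. This forces ten edges and leaves exactly the three binary choices~\eqref{eq:17choices}.

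The obstruction then survives in all eight covers. It uses only the segments $1<4<12$, $7<2<10$, $9<3<13$ and the contacts $(1,7),(4,2)$, $(4,3),(12,13)$, $(2,3),(10,9)$, none of which depends on those choices. Once this classification is in place, your appeal to Theorem~\ref{thm:main} for $Z\ge5$ is no longer needed.
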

\begin{proof}
For an induced forest $L$, write $x_e=\one_{e\in L}$, $d_v=\sum_{e\ni v}x_e$, and $f_\Delta=\sum_{e\subseteq\Delta}x_e$. Put
\[
 \mathcal T=\{(0,1,5),(0,5,6),(1,7,8),(9,10,11),(12,13,14)\}.
\]
Checking the coefficient of each graph edge gives
\begin{equation}\label{eq:17dual}
\begin{split}
 |L|={}&d_2+d_3+d_4+\sum_{\Delta\in\mathcal T}f_\Delta
            +x_{0,15}+x_{15,16}\\
       &-x_{0,5}-x_{2,3}-x_{2,4}-x_{3,4}.
\end{split}
\end{equation}
Using $d_v\le2$, $f_\Delta\le1$, $x_e\le1$ yields $|L|\le13$ and $P\ge4$.

If $|L|=13$, rewrite~\eqref{eq:17dual} as
\[
 13-|L|=\sum_{v=2,3,4}(2-d_v)
 +\sum_{\Delta\in\mathcal T}(1-f_\Delta)
 +(1-x_{0,15})+(1-x_{15,16})
 +x_{0,5}+x_{2,3}+x_{2,4}+x_{3,4}.
\]
Every summand is nonnegative. Hence all vanish. In particular
\[
 d_2=d_3=d_4=2,\quad f_\Delta=1\ (\Delta\in\mathcal T),\quad
 x_{0,15}=x_{15,16}=1,
\]
while the four subtracted selectors are zero. In face $(4,0,15)$ the selected edge $\{0,15\}$ forces $x_{0,4}=x_{4,15}=0$. In face $(4,15,16)$ the selected edge $\{15,16\}$ also forces $x_{4,16}=0$. The remaining neighbours available to vertex 4 are 1 and 12, so $d_4=2$ forces both $\{1,4\}$ and $\{4,12\}$. The faces $(0,1,4),(1,2,4),(3,4,12)$ then give $x_{0,1}=x_{1,2}=x_{3,12}=0$. Vertex 3 can now use only 9 and 13, so $d_3=2$ selects both, and faces $(2,3,9),(3,12,13)$ give $x_{2,9}=x_{12,13}=0$. Vertex 2 can then use only 7 and 10, so both are selected. Faces $(1,2,7),(2,9,10)$ force $x_{1,7}=x_{9,10}=0$.

Tightness of the face $(0,1,5)$ forces the edge $\{1,5\}$, since
$\{0,1\}$ and $\{0,5\}$ are absent. Vertex $1$ already lies on the
selected edges $\{1,4\}$ and $\{1,5\}$, so $\{1,8\}$ is absent.
The tight face $(1,7,8)$ therefore contributes $\{7,8\}$. The ten
compulsory edges are
\[
\begin{gathered}
 \{0,15\},\ \{15,16\},\ \{1,4\},\ \{4,12\},\ \{3,9\},\\
 \{3,13\},\ \{2,7\},\ \{2,10\},\ \{1,5\},\ \{7,8\}.
\end{gathered}
\]
The only remaining choices are one edge from each of
\begin{equation}\label{eq:17choices}
 \bigl\{\{0,6\},\{5,6\}\bigr\},\qquad
 \bigl\{\{9,11\},\{10,11\}\bigr\},\qquad
 \bigl\{\{12,14\},\{13,14\}\bigr\}.
\end{equation}
All eight choices satisfy the local forest conditions and have thirteen edges. They are exactly the four-path covers, so $P=4$.

Name the path containing $1,4$ by $A$ and order it with $1<4$. Name those containing $7,2$ and $9,3$ by $C,D$, with those respective orders. Every one of the eight covers has ordered subsegments
\[
 A:\ 1<4<12,\qquad C:\ 7<2<10,\qquad D:\ 9<3<13.
\]
The actual contact pairs $(1,7),(4,2)$ and $(4,3),(12,13)$ have agreeing orders, while $(2,3),(10,9)$ have opposite orders. Equation~\eqref{eq:contact-sign} therefore gives
\[
 s_As_C=1,\qquad s_As_D=1,\qquad s_Cs_D=-1.
\]
Their product is $1=-1$, impossible. Equivalently their ideal over $\CC$ contains 2, since
\begin{equation}\label{eq:17unit}
\begin{split}
2={}&(s_Cs_D+1)-s_As_D(s_As_C-1)-(s_As_D-1)\\
 &+s_Cs_D(s_A^2-1).
\end{split}
\end{equation}
This is a nonzero $H^1$ class on the triangle of path constraints $A,C,D$. Hence $Z\ge5$.

For an upper bound use paths
\[
 (5,1),\quad(4,12,14),\quad(6,0,15,16),\quad
 (10,2,7,8),\quad(11,9,3,13).
\]
Their starts give $S=\{4,5,6,10,11\}$. A chronology is
\[
\begin{gathered}
6\to0,\quad5\to1,\quad0\to15,\quad15\to16,\quad11\to9,\quad10\to2,\\
9\to3,\quad4\to12,\quad2\to7,\quad7\to8,\quad3\to13,\quad12\to14.
\end{gathered}
\]
Each source has only the indicated target white at that step. Thus $Z=5$.
\end{proof}
The selected core diagonal is $\{1,4\}$; the obstruction lies in the
contact orders between covering paths. Replacing any of the five branches
by a strip of the same ordered type preserves its conditional response,
and hence preserves $Z=5$. It also leaves the core and all demand sets
unchanged. The capacities at vertices $0,1,2,3,4$ remain $(1,1,0,0,1)$,
and $\{1,4\}$ is the only eligible core diagonal. Thus
Theorem~\ref{thm:path-capacity} preserves $\nu(C)=1$ and
$P=|V(C)|-\nu(C)=4$.

The repair formula also has two optimal choices here. The empty core
selection has $\tau=0$, whereas $J=\{\{1,4\}\}$ has $\tau=1$.
With $\{1,4\}$ retained, one possible designated cut is $\{2,7\}$.
An executable five-path cover is
\[
 (0,15,16),\quad(10,2),\quad(6,5,1,4,12,14),\quad
 (7,8),\quad(11,9,3,13).
\]
Thus omitting the core diagonal and repairing its phase obstruction have the same total cost.

\subsection{An all-fan family with a parity defect}
Take $k\ge3$. The core boundary is
\[
 a_0,b_0,c_0,a_1,b_1,c_1,\ldots,a_{k-1},b_{k-1},c_{k-1}.
\]
Use faces $(a_i,b_i,c_i)$ and $(c_{i-1},a_i,c_i)$, and triangulate the central polygon $c_0\cdots c_{k-1}$ arbitrarily. Add fans along all core outer sides: the fans on $a_ib_i$ and $c_{i-1}a_i$ are keyed at $a_i$, and that on $b_ic_i$ is keyed at $c_i$. All have at least two faces. Denote the graph by $A_k$.

\begin{proposition}\label{prop:all-fan}
\[
 P(A_k)=2k,\qquad Z(A_k)=2k+(k\bmod2).
\]
When all added fans have two faces, the order is $9k$.
\end{proposition}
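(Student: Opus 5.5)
The plan is to compute both invariants from the capacity data, using Theorem~\ref{thm:path-capacity} for $P$ and Theorem~\ref{thm:main} (or Corollary~\ref{cor:gap}) for $Z$.

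\textbf{Capacities and $P(A_k)$.} The core has $r=3k$ vertices, and every outer side carries a fan. So all boundary sides lie in $\cR$, and the eligible core edges are exactly the interior diagonals. Reading off the demands from the fan keys:
\begin{itemize}
\item $a_i$ is the key of both of its fans, so $c_{a_i}=2$;
\item $b_i$ is the nonkey of both of its fans, so $c_{b_i}=0$;
\item $c_i$ is the key of the fan on $b_ic_i$ and the nonkey of the fan on $c_ia_{i+1}$, so $c_{c_i}=1$.
\end{itemize}
The only eligible diagonal at $a_i$ is $a_ic_i$. The remaining eligible edges are the diagonals among the $c_j$, namely $c_{i-1}c_i$ and the central diagonals. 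Each $c_j$ has capacity one and every selected edge meets some $c_j$. An edge $a_ic_i$ uses one unit of $c$-capacity, while a $c$–$c$ edge uses two. Hence $|J|\le k$, with equality exactly for $J^*=\{a_ic_i\}$. The face condition holds because distinct $a_ic_i$ lie in distinct faces. Thus $\nu=k$ and $P(A_k)=3k-k=2k$.

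\textbf{The value of $Z(A_k)$.} By Theorem~\ref{thm:main}, $Z=2k$ holds exactly when $\tau_C(\theta,J^*)=0$; any other $J$ has $|J|\le k-1$. There are no ears, so $\theta$ is trivial. I would write down the full forest $L(J^*)$ explicitly. Each $b_i$ joins the nonkey paths of its two fans. Each $c_i$ joins, through $a_ic_i$, the fan path entering at $c_i$ from the side $c_ia_{i+1}$. This gives $2k$ covering paths, arranged cyclically around the core.

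Next I would locate, for each $i$, a pair of contacts between consecutive paths. The natural candidates are through the faces $(a_i,b_i,c_i)$ and $(c_{i-1},a_i,c_i)$ and through the fan roots at $a_i$. Equation~\eqref{eq:contact-sign} then yields two-path relations of the form $s_is_{i+1}=-1$ along a cycle of length $k$, mirroring the occupation equations $\varepsilon_i\varepsilon_{i-1}=-1$ of Theorem~\ref{thm:cyclic-strips}.

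By Proposition~\ref{prop:pair-cohomology}, the class $[b]$ is then nonzero exactly when $k$ is odd. So $\tau_C(\theta,J^*)\ge1$ and $Z\ge2k+1$ for odd $k$. For even $k$ these equations are soluble, and I must exclude longer blocking circuits. I would do this by checking that the contact graph $T_{\cP}$ is a cycle whose only long circuit winds around the core. Its two projectors are already killed by the alternating solution, because consecutive signs differ.

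\textbf{Upper bounds and order.} For even $k$, I would give the explicit alternating orientation and a chronology, in the style of Lemma~\ref{lem:backward-strip}. For odd $k$, I would cut one designated root edge in $\cA$, which breaks the constraint cycle into a path. The two-path system then becomes soluble, so $\tau=1$ and $Z\le 3k-k+1$. Alternatively, I could drop one $a_ic_i$ and use Lemma~\ref{lem:repair-monotone}. The vertex count with two-face fans is $3k$ core vertices plus $2\cdot3k$ fan interiors, giving $9k$.

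\textbf{Main obstacle.} The hard step is identifying the actual contact pairs that produce the sign $-1$ relations, and then proving that no other blocking circuit obstructs the alternating orientation when $k$ is even. I would try first to imitate the occupation-slot argument of Theorem~\ref{thm:cyclic-strips}. There, the inward/outward full fan modes at each $c_i$ (capacity one, a shared slot) force alternation directly. This would make the parity lower bound independent of the cohomology computation.
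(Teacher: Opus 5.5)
Your capacity computation, the unique maximizer $J^*=\{a_ic_i\}$, $P(A_k)=2k$ and the order $9k$ are all correct and agree with the paper. Your odd-$k$ lower bound also works. It is exactly the remark after the paper's proof. On $P_{i-1}=(a_{i-1},c_{i-1},X_i)$ and $P_i=(a_i,c_i,X_{i+1})$, the contacts $c_{i-1}c_i$ and $x_ia_i$ occur in opposite orders, so $s_{i-1}s_i=-1$ around the $k$-cycle. The $b_i$-paths are pendant and give only consistent constraints.

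\textbf{The gap is in both upper bounds, i.e.\ the sufficiency direction.} Two-path solubility is only necessary for executability (Theorem~\ref{thm:blocking}). It is sufficient only when $T_{\mathcal P}$ is a forest, and here it never is.
\begin{enumerate}
\item Your claim that the $k$-cycle of the $P_i$ is the only long circuit is false for $k\ge4$. The central $k$-gon is triangulated arbitrarily, and each central diagonal $c_ic_j$ is a contact between nonconsecutive paths. This produces further blocking circuits, e.g.\ $c_0,x_1,a_1,c_1,x_2,a_2,c_2,c_0$ when $c_0c_2$ is a diagonal.
\item The same objection defeats ``cut one root edge, the two-path system becomes soluble, so $\tau=1$'' for odd $k$. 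Also, the cut must be some $c_ix_{i+1}$; cutting an edge at $b_i$ leaves the cycle intact.
\item Lemma~\ref{lem:repair-monotone} cannot justify dropping an arm. It transports executability from $(J,D)$ to $(J',D')$ with $J'\subseteq J$, $D'\supseteq D$. So it presupposes an executable configuration over $J^*$, which is exactly what fails for odd $k$.
\end{enumerate}

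\textbf{How to close the gap.} An explicit chronology supplies the missing step.
\begin{enumerate}
\item For even $k$, seed $a_i$ for even $i$ and the far end of $X_{i+1}$ for odd $i$. Start each $b_i$-path in the fan whose key is already black.
\item Each odd $P_i$ then runs backwards to $c_i$, using only the seed $a_{i+1}$. After that, every even $a_i$ has $c_i$ as its only white neighbour.
\item Hence all $c_j$ are black before any $c_j$ acts as a source. The central diagonals never block, and the process completes.
\item For odd $k$, cut $c_0x_1$. Start $(a_0,c_0)$ and $P_1$ at $a_0,a_1$, run $X_1$ from its far end, and alternate thereafter. The same argument gives $2k+1$ seeds.
\end{enumerate}

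\textbf{Comparison with the paper.} The paper avoids all phase analysis. Each triangle $(a_i,b_i,c_i)$ with its two fans is a neutral island on $a_ic_i$, so Theorem~\ref{thm:neutral} removes the islands at cost exactly $k$. The remaining blocks, $(c_{i-1},a_i,c_i)$ followed by the fan on $c_{i-1}a_i$, are serpents on the central $k$-gon. Theorem~\ref{thm:cyclic-strips} then gives both bounds at once. Your route exhibits the parity obstruction directly as a nonzero class in $H^1(\Lambda_{\mathcal P};\mathbb F_2)$. In exchange, it has to carry out the executability check that the reduction gets for free.
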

\begin{proof}
The triangle $(a_i,b_i,c_i)$ with its two fans on $a_ib_i,b_ic_i$ is a neutral island on $a_ic_i$. Delete the $k$ disjoint island interiors by Theorem~\ref{thm:neutral}, paying exactly $k$ seeds. Around the central $k$-gon, the remaining block on $c_{i-1}c_i$ is $(c_{i-1},a_i,c_i)$ followed by the fan on $c_{i-1}a_i$ keyed at $a_i$. Viewed from $c_{i-1}c_i$, it retires both old roots and is a serpent. Theorem~\ref{thm:cyclic-strips} therefore gives
\[
 Z(A_k)=k+[k+(k\bmod2)].
\]
For $P$, the capacities are $c(a_i)=2,c(b_i)=0,c(c_i)=1$. Eligible core edges are arms $a_ic_i$ and edges between central $c$ vertices. If $h$ arms and $t$ central edges are selected, summing capacities at the $c_i$ gives $h+2t\le k$, hence $h+t\le k$. All $k$ arms are allowed, so $\nu=k$. The core has $3k$ vertices; Theorem~\ref{thm:path-capacity} gives $P=3k-k$.
\end{proof}
The parity obstruction is also visible in a hypothetical $2k$-seed
normal-form protocol. By Theorem~\ref{thm:hard-general}, choose such a
protocol satisfying the hard capacities. Let $h$ be its number of selected
arms, $t$ its number of selected central edges, and $W\ge0$ its total
terminal penalty. Then
\[
 h+2t\le k,\qquad 2k=3k-(h+t)+W.
\]
Hence $h+t-W=k$, and the two relations give $t+W\le0$. Thus
$t=W=0$ and $h=k$.

Let $X_i$ be the interior path of the fan on $c_{i-1}a_i$, ordered from
the neighbour $x_i$ of its nonkey $c_{i-1}$. The resulting core paths
contain the ordered segments
\[
 P_i=(a_i,c_i,X_{i+1}).
\]
The cross edges $c_{i-1}c_i$ and $x_i a_i$ have opposite orders on
$P_{i-1}$ and $P_i$, so~\eqref{eq:pair-sign} gives
$s_{i-1}s_i=-1$. Multiplication around the cycle yields $1=(-1)^k$,
which is impossible for odd $k$.

\subsection{Sharp common-edge gluing}
The following bound is the two-vertex instance of the vertex-cut inequality of~\cite[Proposition~3.5]{CJMZ}. Its proof is included to identify the boundary choices used in the sharp family.

\begin{proposition}\label{prop:gluing-bounds}
If finite simple graphs $H,Q$ meet exactly in the edge $ab$, then
\[
 Z(H)+Z(Q)-2\le Z(H\cup Q)\le Z(H)+Z(Q)+2.
\]
Put
\[
 \rho_H=\max\{|S\cap\{a,b\}|:S\text{ is a minimum zero forcing set of }H\},
\]
and define $\rho_Q$ similarly. Then the upper bound improves to
\[
 Z(H\cup Q)\le Z(H)+Z(Q)+2-\rho_H-\rho_Q.
\]
\end{proposition}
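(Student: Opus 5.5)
We prove the three inequalities directly from zero forcing sets. The arrow calculus of Lemma~\ref{lem:incidence} is used only where it shortens the argument. Throughout, $G=H\cup Q$, and $H\cap Q$ is the single edge $ab$.

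\emph{Lower bound.} Let $S$ be a minimum zero forcing set of $G$ with a chronological list of forces. We show that $S_H=(S\cap V(H))\cup\{a,b\}$ forces $H$. Replay the forces of the global chronology whose source lies in $V(H)\setminus\{a,b\}$. Such a source $u$ has $N_G[u]=N_H[u]$, since its only neighbours outside $H$ would have to pass through $a$ or $b$. At the time of the global force, every neighbour of $u$ other than its target is black. By induction on the chronology, every vertex of $H$ that is black globally at that time is black in the replay: it is in $S_H$, or it was forced by an interior source of $H$, or it is $a$ or $b$.

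Forces with source $a$ or $b$ are not needed, because $a,b$ are already seeded in $S_H$. The only forces of $H$ they could perform are into $V(H)\setminus\{a,b\}$; if such a force occurred globally, $a$ would have had all other neighbours black, including those in $H$. We handle this by keeping such forces as well: in $H$, the vertex $a$ has fewer white neighbours than in $G$, so the force is still legal. A global force with source in $Q\setminus\{a,b\}$ never targets a vertex of $H\setminus\{a,b\}$. Hence $S_H$ forces all of $H$, and
\[
|S_H|\le |S\cap V(H)|+2-|S\cap\{a,b\}|.
\]
The same holds for $Q$. Adding the two bounds and using $|S|=|S\cap V(H)|+|S\cap V(Q)|-|S\cap\{a,b\}|$ gives $Z(H)+Z(Q)\le |S|+4-|S\cap\{a,b\}|-|S\cap\{a,b\}|$. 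A small extra accounting is needed to reach $Z(G)+2$: write $x=|S\cap\{a,b\}|$, so that $|S_H|+|S_Q|\le |S|+4-x$, which gives only $+4$. The sharper count uses the following fact. A root in $S_H\setminus S$ was forced globally. If its forcing source lies in $H$, the same force happens in the replay, so that root need not be seeded in $S_H$. Each non-initial root is therefore needed as a seed on at most one side. Hence $|S_H|+|S_Q|\le|S|+2$, which is the lower bound $Z(H)+Z(Q)-2\le Z(G)$. This per-root accounting is the delicate step.

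\emph{Upper bound.} Take minimum zero forcing sets $S_H,S_Q$ and put
\[
S=S_H\cup S_Q\cup\{a,b\}.
\]
Then $|S|\le Z(H)+Z(Q)+2-|S_H\cap\{a,b\}|-|S_Q\cap\{a,b\}|$, with the overlap of $S_H$ and $S_Q$ inside $\{a,b\}$ helping further. To show that $S$ forces $G$, run the $H$-chronology but omit its forces from $a$ or $b$ while $Q$ is unfinished. Every force with an interior source of $H$ is legal in $G$, because interior vertices of $H$ have no neighbours in $Q$. This completes the interior of $H$ up to the vertices that $H$ would force from $a$ or $b$. Do the same in $Q$.

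The remaining white vertices are targets of root forces. Such a force $a\to v$ with $v\in H$ is legal in $G$ once every neighbour of $a$ in $Q$ is black. In a single chronology a root forces at most once, and $a$ and $b$ each have at most one root force per side. The crux is to order these at most four root forces so that each is legal. Since $a,b$ are black from the start, a root force on the $H$ side only requires the $Q$-neighbourhood of that root to be complete. We therefore perform all non-root forces of both sides first and then schedule the root forces, checking that none waits on another cyclically. If a wait would be cyclic, we argue that this case cannot arise from legal chronologies on $H$ and $Q$ with both roots seeded: choosing the optimal sets attaining $\rho_H$ and $\rho_Q$ controls which roots are sources. Choosing the attaining sets gives the refined bound, and $\rho\ge0$ gives the plain one. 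We expect this scheduling of root forces to be the main obstacle.
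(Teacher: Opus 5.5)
\textbf{Verdict.} Your lower bound is fine, but your upper-bound construction does not work: the symmetric set $S_H\cup S_Q\cup\{a,b\}$ need not force $H\cup Q$, even for the attaining sets.

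\textbf{Lower bound.} Discard the first version, in which both roots are always seeded and root forces are ``not needed''. Keep the per-root accounting. In $H$, seed $S\cap V(H)$ together with the roots forced globally from $V(Q)\setminus\{a,b\}$. Replay every global force whose source and target lie in $V(H)$; this includes a force $b\to a$ along the root edge, which belongs to both sides. Each non-initial root is then charged to at most one side. This is the paper's restriction of an optimal arrow set, $|M_H|+|M_Q|=|M|+\eps$, written with forcing sets.

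\textbf{The gap in the upper bound.} The cyclic wait you flag really occurs. Take for $H$ and $Q$ the triangles $abc$ and $abd$. Then $Z(H)=Z(Q)=2$ and $\{a,b\}$ is a minimum set of each, so $\rho_H=\rho_Q=2$. Moreover $\{a,b\}$ is the only attaining set, and your set is $\{a,b\}$. In $H\cup Q$, which is $K_4$ minus $cd$, both $a$ and $b$ see the two white vertices $c,d$, so nothing is ever forced. The bound $Z(H\cup Q)\le2$ is true, but not with this witness. Seeding no root does not help either. Take the paths $g\,f\,a\,b\,h$ and $g'\,f'\,a\,b\,h'$ with $S_H=\{g\}$ and $S_Q=\{g'\}$. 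The set $\{g,g',a,b\}$ stalls at $b$, which sees the white vertices $h$ and $h'$. So the symmetric construction is wrong in principle, and ``choosing attaining sets'' cannot repair it.

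\textbf{The missing idea.} Give the roots \emph{opposite} roles on the two sides.
\begin{itemize}
\item In $H$, delete the arrows entering $a,b$, so the roots become initial.
\item In $Q$, delete the arrows leaving $a,b$. The roots then never force into $Q$, and their former targets are seeded instead.
\end{itemize}
Now $Q$ can be completed first using interior sources only. After that, each root has its whole $Q$-neighbourhood black, and the $H$-chronology runs unchanged. In incidence terms, the root columns are sources of $D_H$ and the root rows are sinks of $D_Q$. Hence the union has no row-to-column passage and no cycle. At most four arrows are lost, which gives the $+2$.

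\textbf{The refinement.} An attaining optimum of $H$ has at most $2-\rho_H$ arrows into the roots. On $Q$ you must \emph{reverse} an attaining optimum (Lemma~\ref{lem:deletion}). Its $\rho_Q$ initial roots then become terminal, and at most $2-\rho_Q$ arrows leave the roots. Symmetric attaining seed sets cannot supply this, because on the $Q$ side the roots must sit at the ends of their forcing chains, not at the starts. In the triangle example this construction gives, for instance, the seeds $\{b,d\}$ with $d\to a\to c$.
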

\begin{proof}
Restrict an optimal union arrow set to $H,Q$. The restrictions are executable, and
\[
 |M_H|+|M_Q|=|M|+\eps,\qquad
 |V(H)|+|V(Q)|=|V(H\cup Q)|+2,
\]
where $\eps$ records a selected common arrow. Subtraction proves the lower bound.

For the upper bound take optima in each block. Delete all $H$ arrows entering $a,b$ and all $Q$ arrows leaving them, at most four arrows in total. Both root columns in $H$ are now sources, and both root rows in $Q$ sinks. The union relation has no row-to-column passage. A column-only cycle could only come from $Q$ and a row-only cycle only from $H$, each impossible. Incoming occupations come only from $Q$ and outgoing ones only from $H$. Thus the union is executable. It has at least $|M_H|+|M_Q|-4$ arrows and two fewer vertices than the sum, proving the upper bound.

Choose the first optimum to attain $\rho_H$. Reverse a suitable optimum
in $Q$ so that $\rho_Q$ is its number of root terminals, using
Lemma~\ref{lem:deletion}. At most $(2-\rho_H)+(2-\rho_Q)$ arrows then need
deletion, giving the refinement.
\end{proof}

For $p,q\ge2$ with $\ell=p+q\ge5$, define a rooted graph $H_{p,q}$ on
\[
 a,b,v_0,\ldots,v_\ell,x_1,\ldots,x_\ell
\]
by faces
\[
 (a,v_{i-1},v_i)\ (1\le i\le p),\qquad
 (b,v_{i-1},v_i)\ (p+1\le i\le\ell),\qquad(a,v_p,b),
\]
and all ears $(v_{i-1},x_i,v_i)$. Its outer boundary is
$a,v_0,x_1,v_1,\ldots,x_\ell,v_\ell,b,a$.

\begin{proposition}\label{prop:sharp-plus-two}
\[
 P(H_{p,q})=Z(H_{p,q})=\ell-1.
\]
For two such blocks, with parameters $\ell$ and $m$ and disjoint interiors,
\[
 P(H_{p,q}\cup_{ab}H_{r,s})=Z(H_{p,q}\cup_{ab}H_{r,s})=\ell+m.
\]
Thus the positive correction $+2$ in Proposition~\ref{prop:gluing-bounds} is attained within maximal outerplanar graphs.
\end{proposition}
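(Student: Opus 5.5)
The plan is to sandwich both quantities between a path-cover lower bound and a forcing upper bound. For a single block, Theorem~\ref{thm:path-capacity} (or directly Lemma~\ref{lem:linear-forest}) should give $P(H_{p,q})\ge\ell-1$, and an explicit chronology should give $Z(H_{p,q})\le\ell-1$. For the glued graph the upper bound is free: Proposition~\ref{prop:gluing-bounds} gives
\[
 Z(H_{p,q}\cup_{ab}H_{r,s})\le(\ell-1)+(m-1)+2=\ell+m,
\]
so only $P(H_{p,q}\cup_{ab}H_{r,s})\ge\ell+m$ remains. Then $P\le Z$ closes both equalities, and the sharpness of the $+2$ is immediate.

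\emph{Single block.} The graph has $n=2\ell+3$ vertices, so I must show $\max|L|=\ell+4$ for an induced linear forest $L$ satisfying \eqref{eq:local-forest}. For the upper bound on $|L|$ I would write a weighted identity in the style of \eqref{eq:17dual}: express $|L|$ as a sum of vertex terms $d_v$ at suitable core vertices (essentially $a$, $b$, and the $v_i$), face terms $f_\Delta$ over the ears, and a few signed edge selectors. Then $d_v\le2$, $f_\Delta\le1$, $x_e\le1$ give $|L|\le\ell+4$. The guiding heuristic is that each degree-two vertex $x_i$ either ends a path or forces the unselected diagonal $v_{i-1}v_i$, and that the fan paths at $a$ and at $b$ can each contribute only a bounded number of core edges through their keys. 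Equivalently, I can identify the canonical core and capacities and evaluate $\nu(C)$ via Theorem~\ref{thm:path-capacity}. Every ear demands one endpoint, so $c_{v_i}\le1$ for interior $v_i$, and the face conditions in the two fans limit core edges to roughly one per fan plus the edges at $a,b$. I would do this capacity count only as a cross-check.

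For $Z(H_{p,q})\le\ell-1$ I would exhibit a seed set. The candidate is all $x_i$ except those in the two extreme ears, or a variant placing seeds at $v_0$ and $v_\ell$. Each seeded $x_i$ lets a black $v_{i-1}$ force along the fan rim. The rim is completed from both ends toward $v_p$, after which $v_p$ forces $a$ or $b$ and the key finishes. Since every ear triangle contains a degree-two vertex, every ear either carries a seed or is cleared by a rim vertex; I will adjust the seed placement until such a chronology is legal.

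\emph{Glued graph.} After gluing, $ab$ becomes an interior diagonal with the triangle $(a,v_p,b)$ on one side and $(a,v'_r,b)$ on the other. The vertex count is $(2\ell+3)+(2m+3)-2$, so the target becomes $\max|L|\le\ell+m+4$. I would add the two single-block weighted identities and correct along the shared edge $ab$ and at $a,b$. The key point is that in a single block a path could end at $a$ or $b$ using the outer edge $ab$ or a free degree slot, whereas in the union the degrees of $a$ and $b$ are shared between the two blocks. A selected $ab$ would also have to be compatible with both face conditions at once. The resulting loss of exactly two edges compared with the sum of the two block optima must come out of the identity.

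\emph{Main obstacle.} Finding a single linear certificate for the glued graph whose slack terms are all visibly nonnegative is the crucial step, since the two face conditions on $ab$ and the degree bounds at the shared roots interact. I would first solve the block case carefully with $p,q$ general, recording which vertex-degree and face inequalities are tight. I would then check that in the union the tight sets at $a$ and $b$ cannot both be met. This is the step I expect to be hardest and where small cases, such as $\ell=m=5$, should be checked first.
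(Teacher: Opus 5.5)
Your sandwich is the paper's: a linear path-cover certificate below, an explicit chronology above for one block, and Proposition~\ref{prop:gluing-bounds} for the union. The gap is that the single-block upper bound $Z(H_{p,q})\le\ell-1$ is never actually supplied. The second equality inherits this gap, because it reaches $\ell+m$ only through Proposition~\ref{prop:gluing-bounds} with $Z(H_{p,q})\le\ell-1$.

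Your candidate $\{x_2,\dots,x_{\ell-1}\}$ has only $\ell-2$ vertices, so the bound $P\ge\ell-1$ you are proving already excludes it. It is also inert: each seeded $x_i$ sees both $v_{i-1},v_i$ white, and nothing else is black. Adding $v_0,v_\ell$ gives $\ell$ seeds and still no first force, since $v_0$ sees $a,x_1,v_1$ white. ``Adjust the placement until legal'' is not a construction valid for all $p,q\ge2$.

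The paper instead seeds the \emph{extreme} ears. For $H_{2,3}$ it takes $S=\{v_0,x_1,v_4,x_5\}$ with
$x_1\to v_1$, $v_0\to a$, $x_5\to v_5$, $v_5\to b$, $a\to v_2$, $v_1\to x_2$, $b\to v_3$, $v_2\to x_3$, $v_3\to x_4$.
The roots thus become black early and then act as keys for the middle of the spine. It then inducts on the number of ears. Adjoin $v_{-1},x_0$ with faces $(a,v_{-1},v_0)$ and $(v_{-1},x_0,v_0)$, add $x_0$ to the seeds, and perform $x_0\to v_{-1}$ as soon as $v_0$ is black. Only $a$ and $v_0$ acquire new neighbours, and these are black before either vertex forces. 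So every old force stays legal, and the invariant that each end spine vertex is black before its root first forces is preserved. Left and right steps, started from $H_{2,3}$ or its reflection $H_{3,2}$, reach every $p,q\ge2$ with $p+q\ge5$, adding one seed per ear.

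For the lower bound you have the right method but not the certificate. The shape you guess, with degree terms at the spine vertices $v_i$, cannot give $\ell+4$: the $\ell+1$ constraints $d_{v_i}\le2$ are far too weak. The correct certificate uses only the two roots. Every edge other than $ab$ falls into one of two kinds. Either it lies in exactly one ear triangle (the spine edges $v_{i-1}v_i$ and the ear edges), or it is a spoke incident with exactly one of $a,b$. Hence
\[
 |L|=d_L(a)+d_L(b)+\sum_i f_i-\one_{ab\in L}\le \ell+4 .
\]
This edge partition survives gluing verbatim: the union has $\ell+m$ ear faces and still only the two root vertices, so $|L|\le\ell+m+4$ at once. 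The step you flag as the main obstacle therefore does not arise. There is no need to reconcile the two face conditions at $ab$ with the shared root degrees or to compare tight sets.
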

\begin{proof}
Every nonroot edge belongs either to exactly one ear triangle or to exactly one of the root degrees. The root is counted twice. Consequently every selected induced forest satisfies
\[
 |L|=d_L(a)+d_L(b)+\sum_i f_i-\one_{ab\in L}\le4+\ell,
\]
where $f_i$ is its edge count in the $i$th ear face. Since the order is $2\ell+3$, this gives $P\ge\ell-1$.

For $p=2,q=3$, take seeds $v_0,x_1,v_4,x_5$ and perform
\[
 x_1\to v_1,\ v_0\to a,\ x_5\to v_5,\ v_5\to b,
 \ a\to v_2,\ v_1\to x_2,\ b\to v_3,\ v_2\to x_3,\ v_3\to x_4.
\]
The two end spine vertices become black before their corresponding root
first forces. For a left extension, adjoin vertices $v_{-1},x_0$ and the
faces $(a,v_{-1},v_0)$ and $(v_{-1},x_0,v_0)$. Add $x_0$ to the initial
set. Immediately after $v_0$ becomes black, perform $x_0\to v_{-1}$ and
resume the old chronology; if $v_0$ is initial, perform this new force
before the old chronology starts. The only white neighbour of $x_0$ is
$v_{-1}$. The only old vertices with new neighbours are $a$ and $v_0$,
and all those neighbours are black before either vertex is used as a
source. Thus every subsequent old force remains legal. The new endpoint
$v_{-1}$ is black before $a$ first forces, preserving the induction
property. Relabelling the spine gives $H_{p+1,q}$. The right extension uses
$b,v_\ell$ instead of $a,v_0$ and gives $H_{p,q+1}$.

Starting from $H_{2,3}$ or its reflection $H_{3,2}$ reaches every
$p,q\ge2$ with $p+q\ge5$. Each extension adds one seed, so
$Z\le\ell-1$.

For the union, the same edge identity has $\ell+m$ ear faces and still only two root degrees. With $2\ell+2m+4$ vertices it gives $P\ge\ell+m$. Proposition~\ref{prop:gluing-bounds} gives
\[
 Z\le(\ell-1)+(m-1)+2=\ell+m.
\]
Both inequalities are equalities.
\end{proof}
For example two 13-vertex copies of $H_{2,3}$ have individual $P=Z=4$, whereas their 24-vertex union has $P=Z=10$.

\subsection{An eight-vertex leaf-count calculation}\label{subsec:eight}
Let the outer boundary be $0,1,\ldots,7$ and the bounded faces be
\[
 017,\quad137,\quad123,\quad357,\quad345,\quad567.
\]
The weak-dual degrees, in this order, are $(1,3,1,3,1,1)$, so it has four
leaves. The initial set and forces
\[
 S=\{0,1,6\},\qquad
 0\to7,\ 6\to5,\ 7\to3,\ 1\to2,\ 3\to4
\]
give a three-seed chronology. For a lower bound the only degree-two vertices are $0,2,4,6$; all other degrees are at least four. A two-seed process can start only at one of these ears together with one adjacent seed. After its first force precisely the ear triangle is black. Each non-ear vertex of that triangle still has at least two white neighbours, and the ear has none, so the process stops. Hence $Z=3$.

Thus $Z(G)=3<4=\ell$, contradicting the weak-dual-leaf lower bound
stated as Theorem~1 on page~3 of the 2023 preprint version of~\cite{IKKpre}. In the cover
\[
 (0,7,3,4),\quad(6,5),\quad(1,2),
\]
leaf faces $017,123$ share the endpoint 1, and $345,567$ share the endpoint 5. Counting two endpoints separately at each leaf therefore counts the same endpoint more than once. 

\appendix
\section{Exact algorithms and finite certificates}\label{app:algorithms}

We give the six-port recurrence, the capacity recurrence, and the finite
identities used in the proofs. The recurrences also recover a minimum
zero forcing set and its chronology.

\subsection{The six-port operation}\label{app:sixports}

Let the root triangle be $acb$. The ordered child roots are $(a,c)$ and $(c,b)$.
A child state consists of four occupation bits and a strict reachability
relation on its four ports. Write the left occupations as
$(i_a^L,i_c^L,o_a^L,o_c^L)$ and the right occupations as
$(i_c^R,i_b^R,o_c^R,o_b^R)$. For a root choice
$\eta\in\{0,+,-\}$, where $+$ means $a\to b$ and $-$ means $b\to a$, form
\begin{align}
 i_a&=i_a^L+\one_{\eta=-},&o_a&=o_a^L+\one_{\eta=+},\notag\\
 i_c&=i_c^L+i_c^R,&o_c&=o_c^L+o_c^R,\notag\\
 i_b&=i_b^R+\one_{\eta=+},&o_b&=o_b^R+\one_{\eta=-}.
 \label{eq:app-occupations}
\end{align}
Reject a choice if any of these six integers exceeds one.

Embed the child relations in the port set
\[
 p_a,q_a,p_c,q_c,p_b,q_b.
\]
Adjoin $q_a\to p_b$ if $\eta=+$, and $p_b\to q_a$ otherwise. Adjoin
$q_b\to p_a$ if $\eta=-$, and $p_a\to q_b$ otherwise. Take the
positive-length transitive closure. Reject the choice if a diagonal entry
is present; otherwise restrict the closure to $p_a,q_a,p_b,q_b$.
Together with the four exterior occupations, this is
$\Phi(s_L,s_R,\eta)$. The order of the last two steps is essential: a cycle
through $p_c$ or $q_c$ must be detected before these ports are forgotten.

The seven possible relation shapes and their occupation conditions generate
exactly 56 states. Applying the preceding operation to all $56^2\cdot3$
triples yields 1,410 allowed triples.

For a rooted block, let $m_H(s)$ be the maximum number of selected arrows
with state $s$, and give an unrealizable state value $-\infty$. On the
single-edge block the empty choice has value zero and the two directed
choices have value one. At a triangle,
\begin{equation}\label{eq:app-max-arrows}
 m_H(s)=\max_{\Phi(u,v,\eta)=s}
 \left(m_L(u)+m_R(v)+\one_{\eta\ne0}\right).
\end{equation}
Equivalently, for $r_H(s)=|V(H)|-2-m_H(s)$,
\begin{equation}\label{eq:app-raw-min}
 r_H(s)=\min_{\Phi(u,v,\eta)=s}
 \left(r_L(u)+r_R(v)+1-\one_{\eta\ne0}\right).
\end{equation}
The minimum is taken over a finite list of explicitly generated transitions.
Store one attaining triple with every finite entry. Traceback returns an
attaining arrow set.

\subsection{A minimum set and its chronological certificate}
\label{app:constructive-witness}

Generate the 56 states and 1,410 transitions by the six-port rule. At
each face and attainable state, store the maximum in
\eqref{eq:app-max-arrows} and one attaining triple. These choices recover
an optimal arrow set, whose incidence digraph determines a chronology.

\begin{proposition}\label{prop:constructive-algorithm}
Let $G$ be a polygon triangulation on $n\ge3$ vertices, supplied with indexed
face adjacency and an ordered boundary edge. A minimum zero forcing set $S$
and a chronological sequence of its $n-|S|$ forces can be constructed in
$O(n)$ word-RAM time and $O(n)$ words of storage. The words have $O(\log n)$
bits. A supplied initial set and chronology can be checked against the
original color rule in $O(n)$ time, independently of the state tables.
\end{proposition}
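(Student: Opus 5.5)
The plan is to run the 56-state recurrence \eqref{eq:app-max-arrows} along a rooted binary triangle tree, trace back attaining choices to obtain an optimal executable arrow set $M$, and then read off $S=S(M)$ and a chronology from a topological order of $D_G(M)$. First, from the indexed face adjacency and the ordered boundary edge, build the rooted triangle tree in $O(n)$ by a breadth-first search over faces. Orient each face as a root triangle $acb$ whose children are the blocks on $(a,c)$ and $(c,b)$, with a boundary side serving as the single-edge block $E$. The root edge itself is handled as in \eqref{eq:568-eval}, namely by pairing the last block against $E$ via Proposition~\ref{prop:response}. Equivalently, the global optimum is $\max_s m_H(s)$ over states whose gluing with the edge block is compatible. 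The six-port rule, the 56 states and the 1,410 transitions depend on nothing in $G$, so they are precomputed once as a constant-size table.

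The main pass processes faces in postorder. At each face, compute the 56-vector $m_H$ from the two child vectors by scanning the 1,410 transitions, and store one attaining triple $(u,v,\eta)$ for every finite entry. This is $O(1)$ work and $O(1)$ words per face, so $O(n)$ overall, since a polygon triangulation has $n-2$ faces. Values are at most $n$, so they fit in $O(\log n)$-bit words. Correctness of the recurrence is exactly the statement $r_{T(H,K)}=\mathcal F(r_H,r_K)$, which follows from Lemma~\ref{lem:gluing} applied twice at the six ports; closure of $\Sigma$ is Lemma~\ref{lem:56}. Together with \eqref{eq:arrow-opt}, this shows that the maximum yields $Z(G)=n-\max|M|$.

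For traceback, start at the root with an attaining state and descend in preorder. At each face, follow the stored triple: assign $\eta$ to the parent edge and pass $u,v$ to the children. This is $O(n)$ and decides, for every edge, whether it carries an arrow and in which direction. The resulting $M$ is executable by the exactness of the gluing lemma, so $D_G(M)$ is acyclic by Lemma~\ref{lem:incidence}. Next, build $D_G(M)$ explicitly: it has $2n$ ports and, by \eqref{eq:incidence}, $n+2|E|=O(n)$ arcs, since $|E|=2n-3$. A Kahn topological sort in $O(n)$ then lists the ports, and the targets $p_v$ with $v\notin S(M)$, taken in this order, give the chronology of forces. Legality of each force is exactly the argument in Lemma~\ref{lem:incidence}.

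For independent verification, simulate the color rule directly. Maintain for each vertex a counter of white neighbours. Each force $u\to v$ is checked by testing that $u$ is black, $v$ is white and the counter at $u$ equals $1$. When $v$ becomes black, decrement the counters of its neighbours. Finally, confirm that all vertices end black and that the sequence has length $n-|S|$. This costs $O(\sum\deg)=O(n)$ and uses no state tables. I expect the main obstacle to be not the complexity but certifying that the traced-back arrow set is globally executable. That requires the stored triples to be consistent, i.e. the state realized by the reconstructed local arrows must coincide with the state recorded at each node. I would settle this by induction from the leaves, using the fact that $\Phi$ is computed exactly from the child relations, so each subtree's actual boundary state equals its recorded state, and Lemma~\ref{lem:gluing} then applies at every face.
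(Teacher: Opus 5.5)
Your construction of $S$ and its chronology matches the paper's proof step for step. You evaluate \eqref{eq:app-max-arrows} in postorder over the precomputed 1,410 transitions, storing one attaining triple per finite entry. You trace back to an optimal arrow set $M$. You then run a queue-based topological sort of $D_G(M)$, which has $n+2|E(G)|=5n-6$ arcs, and emit the predecessor arrow of each noninitial column. Pairing the root block with $E$ is harmless: every state of $G$ is compatible with the edge state having the same root choice, and the shared-arrow terms cancel. So it returns $\max_s m_G(s)$, which the paper takes directly.

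The gap is in the table-free check. Testing only that $u$ is black, $v$ is white, and the white-neighbour counter of $u$ equals one does not implement the color rule. Suppose $u$ has a unique white neighbour $w$, and $v\ne w$ is any other white vertex, possibly not adjacent to $u$ at all. Your test then accepts the illegal force $u\to v$. You must also certify that $v$ is that unique white neighbour. A naive adjacency search costs $O(\deg u)$, which can be $\Theta(n)$ in a fan, so it needs an amortization argument.

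The paper keeps, next to each counter, the bitwise exclusive-or of the indices of the white neighbours. When the count is one, this value is exactly the index of the remaining white neighbour, so the extra test is $O(1)$. Recoloring $v$ updates it in $O(1)$ per incident edge, using $O(\log n)$-bit words. Alternatively, scan $N(u)$ only when the counter test succeeds. A successful force drops the counter of $u$ to zero permanently, and a failed scan ends the check. So each vertex is scanned at most once, and the total remains $O(n+|E(G)|)=O(n)$. With either repair your argument is complete.
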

\begin{proof}
The rooted face tree has $n-2$ internal nodes and $n-1$ single-edge leaves.
Process it in postorder. Each internal node tests the fixed list of 1,410
transitions, with integers between zero and $n-1$ and one pointer per
attainable state. The exact interface criterion and
\eqref{eq:app-max-arrows} show inductively that the stored entry is the
largest number of arrows realizing that state. At the root choose a state
attaining $\max_s m_G(s)$. Follow the stored triples to the leaves. Every
original edge is the new root edge of exactly one node of this decomposition;
hence its arrow is reported once. The recovered arrow set $M$ satisfies
\[
 |M|=\max_s m_G(s),\qquad
 S=V\setminus\{v:\exists u,\ u\to v\in M\},\qquad
 |S|=n-|M|=Z(G).
\]
There are $O(n)$ entries and pointers because the number of states is fixed.
Both the evaluation and traceback therefore take $O(n)$ operations.

To order the arrows, form the incidence digraph $D_G(M)$ of
\eqref{eq:incidence}. It has $2n$ vertices and exactly
\begin{equation}\label{eq:sparse-incidence-size}
 n+2|E(G)|=n+2(2n-3)=5n-6
\end{equation}
arcs. The traceback realizes an executable state, so this digraph is acyclic.
Compute a topological order by a queue of zero-indegree vertices. For each
noninitial column $p_v$, emit its unique selected predecessor arrow $u\to v$
when $p_v$ is removed from the queue. Initially color all of $S$ black. Every
required neighbor $w\in N[u]\setminus\{v\}$ satisfies
\[
 p_w\longrightarrow q_u\longrightarrow p_v.
\]
It has therefore either been initial or has already appeared as the target
of an emitted force. Consequently $u$ is black and $v$ is its only white
neighbor. The emitted order is a legal chronology. Equation
\eqref{eq:sparse-incidence-size} gives linear time and storage for this step.

For the independent check, maintain at each vertex the number of its white
neighbors and the bitwise exclusive-or of their integer indices. Initialize
these quantities by scanning the adjacency lists once. A proposed $u\to v$
is valid precisely when $u$ is black, $v$ is white, the count at $u$ is one,
and its exclusive-or equals the index of $v$. On coloring $v$, update these
two quantities at every neighbor of $v$. Each vertex changes color once and
each adjacency is updated at most once in this way, so the total work is
$O(n+|E(G)|)=O(n)$.
\end{proof}

The program \texttt{algorithms/constructive\_zero\_forcing.py} implements
this construction using the Python standard library. Its JSON input
contains the triangular faces and an optional ordered root; its output
contains $Z(G)$, $S$, the selected arrows, and their chronology. The
56 states and 1,410 transitions are generated at startup.

The face decomposition uses a stack, and the incidence graph is processed
by a queue. The JSON reader indexes face incidences by dictionaries;
the deterministic complexity in Proposition~\ref{prop:constructive-algorithm}
is for indexed face adjacency.

\begin{verbatim}
python algorithms/constructive_zero_forcing.py graph.json -o solution.json
\end{verbatim}

For the eight-vertex triangulation in Section~\ref{subsec:eight}, one output is
\[
 S=\{0,1,6\},\qquad
 0\to7,\quad6\to5,\quad7\to3,\quad5\to4,\quad3\to2.
\]

\subsection{Core capacities, ears and traceback}\label{app:capacity-algorithm}

The capacity recursion stores labels
\[
 (s,j_a,j_b,k_a,k_b).
\]
The integers $j_a,j_b$ count selected \emph{core} edges; the bits $k_a,k_b$
are credits from the already present boundary sides. The state $s$
contains the total incoming and outgoing occupations.

For a terminal strip, the root is unselected and $j_a=j_b=0$. Its states
are the canonical modes in Section~\ref{sec:strips}, with their listed
costs. A fan keyed at $a$ has credits $(1,0)$, a fan keyed at $b$ has
$(0,1)$, and a serpent has $(0,0)$. For a triangular ear, a mode using
$a$ has credits $(0,1)$ and a mode using $b$ has $(1,0)$. Its empty mode
is entered with both possible credit pairs. Thus the ear allocation is
optimized as explicit finite data.

A bare boundary edge is also a leaf of the core recursion. Its three
states have
\begin{equation}\label{eq:app-bare-leaf}
 (j_a,j_b)=(\eps,\eps),\qquad
 (k_a,k_b)=(1,1),\qquad r=-\eps,
\end{equation}
where $\eps$ is zero or one according to whether that edge is selected.
The credits express the absence of a terminal demand on that side. Bare
edges remain eligible core edges.

At a core triangle combine a left label and a right label only when
\begin{equation}\label{eq:app-forgotten-capacity}
 j_c^L+j_c^R\le k_c^L+k_c^R.
\end{equation}
For every allowed $\Phi$ transition, the output has
\begin{equation}\label{eq:app-capacity-output}
 (j_a,j_b,k_a,k_b)
 =\left(j_a^L+\eps,j_b^R+\eps,k_a^L,k_b^R\right)
\end{equation}
and cost $r_L+r_R+1-\eps$. Keep the least cost at each label.
Within one credit pair, delete a weighted label only if a distinct one
strictly dominates it in the sense of Lemma~\ref{lem:dominance}.
The frontier preserves every future feasible minimum by
Lemma~\ref{lem:dominance}.

Choose one actual terminal root of the canonical core as the final cut.
The inside table includes all other sides; the final terminal table is
kept on the other side. When closing the root, require identical root
choices, compatible occupations and acyclic united relations, together
with
\[
 j_a^L+j_a^R-\eps\le k_a^L+k_a^R,
 \qquad
 j_b^L+j_b^R-\eps\le k_b^L+k_b^R.
\]
The final cost is $2+r_L+r_R+\eps$, with the shared root arrow counted
once. For a terminal root, $\eps=0$.

Every retained entry stores its realizing child labels and root choice.
Traceback determines the ear demands, core arrows and terminal modes.
Realize each mode in the original strip by the explicit root-forcing
construction, reversal and deletion. Maintain the current black set $B$,
initially the complement of the target set. A pending arrow $u\to v$ is
legal exactly when
\[
 u\in B,\qquad v\notin B,\qquad N(u)\setminus B=\{v\}.
\]
After performing it, add $v$ to $B$. This execution check uses the original
color rule.

\subsection{Direct evaluation of the capacity--phase formula}
\label{app:literal-phase}

The formula of Section~\ref{sec:phases} can also be evaluated by
enumerating core selections, designated cuts, and path directions. This
procedure has exponential running time.

Given the triangular faces, reconstruct the weak dual and its minimal
subtree containing all degree-three vertices. If that subtree is empty,
the answer is two. Otherwise construct the core and every actual terminal
strip. Enumerate the ear choices $\theta$, then the subsets
$J\subseteq E(C)\setminus\cR$ satisfying the vertex and face capacities.
For each such pair, construct the fixed full terminal forests by the
original color rule. Record their selected root-to-interior edges
$\cA_\theta$.

For every $D\subseteq\cA_\theta$, form $L(J,D)$ and recover its components
in actual path order. Verify that they are induced, vertex-disjoint and
cover all vertices, and that their number is
\[
 r-|J|+|D|.
\]
Enumerate directions of the nonsingleton paths and execute precisely
the corresponding path arrows. A successful execution is a point of the
phase scheme; a failed execution can instead be certified by a dependency
cycle or by a blocking circuit. The minimum successful path count is the
right side of the main formula.

Process candidates in increasing total cost and stop at the first
feasible cost. Only nontrivial paths receive sign variables.

\subsection{Finite verification data}\label{app:finite-proof}
\label{app:closure-contract}

The following finite data and integer identities enter the proofs of
Lemmas~\ref{lem:568}, \ref{lem:3912}, and the replacement results of
Section~\ref{sec:replacement}.
\begin{center}\small
\begin{tabular}{@{}>{\raggedright\arraybackslash}p{26mm}p{116mm}@{}}
\toprule
Data&Finite identity\\\midrule
56 causal states&
Reconstruct the states, their triangle witnesses, all 1,410 allowed
six-port transitions, and all 387 compatible shared-root pairs.\\[3pt]
568 responses&
Evaluate every recursive polygon witness and all $568^2=322,624$
products; compare all coordinates and scalar shifts.\\[3pt]
3,912 joint data&
Evaluate every partial-core witness, all $3,912^2=15,303,744$ products,
and all $4\cdot3,912=15,648$ closing equalities.\\[3pt]
6 terminal responses&
Check all 672 coordinates of the unary table and both min-plus
reciprocity identities.\\[3pt]
209 corridor operators&
Evaluate every witness word, all 418 left-generator products, and the
root endpoint identifications; representatives have length at most ten.\\[3pt]
284 context classes&
Compute all 322,624 conditional-type pairings and 80,656 quotient
products. Seventeen polygon contexts separate all 40,186 distinct class
pairs; each context has at most 15 vertices.\\
\bottomrule
\end{tabular}
\end{center}

For the 568-type certificate, the numbers of products with shifts
$0,1,2,3$ are respectively $8,139$, $151,699$, $151,761$, and $11,025$.
The largest representative has 305 faces. For the capacity certificate,
the resulting frontiers have at most thirty entries, normalized costs
in $\{0,1,2,3,4\}$, offsets in $\{0,1\}$, and removed minima in
$\{0,1,2,3\}$.

The capacity product is computed as follows. Initialize an empty table.
For every pair of finite child labels and every $\eta\in\{0,+,-\}$,
apply~\eqref{eq:forget-capacity} and the six-port rule. Reject a failed
occupation or cycle test; otherwise compute the label
\eqref{eq:capacity-update} and cost~\eqref{eq:capacity-cost}. At each
label keep the least value and one attaining pair. After examining all
candidates, remove only the strictly dominated weighted labels of
Lemma~\ref{lem:dominance}. Subtract the finite minimum $u$, update the
unrestricted type and offset by~\eqref{eq:joint-product}, and return the
normalized datum together with $u$. Closing checks
\eqref{eq:closing-capacity} and causal compatibility before taking either
minimum. The stored values are the minima of these finite candidate sets.

The primitive data represent the leaf blocks. At a triangle, the
interface criterion identifies every parent realization with a compatible
pair of child realizations and a root choice. The product therefore
computes its weighted datum. Closure supplies a listed datum for every
recursive term, and witness evaluation identifies that datum with the
same graph. At the final root, the verified closing equality gives the
equality of the restricted and unrestricted optima. Induction on the number of faces therefore proves equality for every
decorated core.

Each certificate witness is a primitive or a pair of earlier data, so
its recursion is acyclic. The unrestricted input is checked independently
by reconstructing the local incidence data and recomputing all 322,624
products before the capacity comparison. The input parser requires the
specified number of distinct data, all primitive keys, valid state and
type indices, nonempty normalized frontiers, and sorted, distinct frontier
labels. It rejects truncated data and trailing tokens. Infinity denotes
an absent candidate: such entries are skipped before finite arithmetic,
and are never normalized by subtraction. The finite values are below the
integer sentinel used by the implementation.

\subsection{Programs}\label{app:reproducibility}

The accompanying source contains the finite data and their verifiers.
The combinatorial programs use Python~3.10 or later and its standard
library. The complete capacity comparison uses a C++17 verifier, which
evaluates all $15,303,744$ ordered products and $15,648$ closing
equalities. The complete verification driver also uses the dependencies
listed in \texttt{requirements.txt}. From the source directory, run
\begin{verbatim}
python verify.py --full
\end{verbatim}
The driver evaluates the local states, the 568-type response algebra,
and the capacity comparison in that order. The source directory also
contains implementations of the terminal, corridor, and context
calculations in Section~\ref{sec:replacement}.

\section*{Acknowledgments}
We began studying zero forcing in 2021, during our second year as
undergraduates, through a combinatorial argument for maximal biconnected
outerplanar graphs. We later returned to that argument, recast it in terms
of sheaf cohomology, and pursued its extension to general finite graphs.
This research grew out of our own mathematical curiosity and our interest
in exploring these questions.

We used AI tools to assist with mathematical derivations, computations,
and preliminary drafts. We are now checking the arguments and rewriting
both manuscripts ourselves. The versions submitted for publication will
be the manuscripts rewritten by the authors.

\end{document}